\documentclass{article}

\usepackage[hmargin=3.5cm,vmargin=3cm]{geometry}
\usepackage{graphicx} 
\usepackage{amsmath,amsfonts,amssymb,amsthm}
\usepackage{mathtools}
\usepackage{xcolor}
\usepackage{hyperref}
\hypersetup{
    colorlinks,
    linkcolor={red!50!black},
    citecolor={blue!50!black},
    urlcolor={blue!80!black}
}
\usepackage{cleveref}
\usepackage{enumitem}
\usepackage{footmisc}

\theoremstyle{plain}
\newtheorem{thm}{Theorem}
\Crefname{thm}{Theorem}{Theorems}
\newtheorem{cor}[thm]{Corollary}
\newtheorem{prop}[thm]{Proposition}
\newtheorem{lem}[thm]{Lemma}
\Crefname{lem}{Lemma}{Lemmas}
\newtheorem{claim}[thm]{Claim}
\newtheorem*{hyp*}{Hypothesis}

\theoremstyle{definition}
\newtheorem{defn}[thm]{Definition}
\Crefname{defn}{Definition}{Definitions}
\newtheorem{rmk}[thm]{Remark}
\newtheorem{exa}[thm]{Example}

\crefname{thm}{theorem}{theorems}
\Crefname{thm}{Theorem}{Theorems}
\crefname{cor}{corollary}{corollaries}
\Crefname{cor}{Corollary}{Corollaries}
\crefname{prop}{proposition}{propositions}
\Crefname{prop}{Proposition}{Propositions}
\crefname{lem}{lemma}{lemmas}
\Crefname{lem}{Lemma}{Lemmas}
\crefname{claim}{claim}{claims}
\Crefname{claim}{Claim}{Claims}
\crefname{defn}{definition}{definitions}
\Crefname{defn}{Definition}{Definitions}
\crefname{rmk}{remark}{remarks}
\Crefname{rmk}{Remark}{Remarks}
\crefname{exa}{example}{examples}
\Crefname{exa}{Example}{Examples}
\AddToHook{env/cor/begin}{\crefalias{thm}{cor}}
\AddToHook{env/prop/begin}{\crefalias{thm}{prop}}
\AddToHook{env/lem/begin}{\crefalias{thm}{lem}}
\AddToHook{env/claim/begin}{\crefalias{thm}{claim}}
\AddToHook{env/defn/begin}{\crefalias{thm}{defn}}
\AddToHook{env/rmk/begin}{\crefalias{thm}{rmk}}
\AddToHook{env/exa/begin}{\crefalias{thm}{exa}}

\newcommand{\hypref}[1]{(\nameref{#1})}

\renewcommand{\d}{\mathrm{d}}

\DeclareMathOperator{\rk}{rk}

\DeclareMathOperator{\End}{End}
\DeclareMathOperator{\Fol}{Fol}

\DeclareMathOperator{\CS}{CS}
\DeclareMathOperator{\CpS}{CpS}

\newcommand{\bS}{\mathbb{S}}
\newcommand{\bZ}{\mathbb{Z}}
\newcommand{\bN}{\mathbb{N}}
\newcommand{\bR}{\mathbb{R}}
\newcommand{\bC}{\mathbb{C}}
\newcommand{\bT}{\mathbb{T}}

\newcommand{\cF}{\mathcal{F}}
\newcommand{\cM}{\mathcal{M}}

\newcommand{\cK}{\mathcal{K}}

\newcommand{\cS}{\mathcal{S}}

\newcommand{\cP}{\mathcal{P}}

\newcommand{\Vol}{\mathrm{Vol}}

\newcommand{\VertBdle}{\mathrm{Vert}}
\newcommand{\HorBdle}{\mathrm{Hor}}

\DeclareMathOperator{\ev}{ev}

\title{On some rigidity results for \\ high-dimensional confoliations}
\author{Fabio Gironella \and Seungook Yu}
\date{}

\newcommand{\Addresses}{{
  \bigskip
  \footnotesize

  \textsc{CNRS - Laboratoire de Mathématiques Jean Leray, Nantes Université}\par\nopagebreak
  \textit{E-mail address}: \texttt{fabio.gironella@cnrs.fr}

  \medskip

  \textsc{Department of Mathematics, Pohang University of Science and Technology (POSTECH)}\par\nopagebreak
  \textit{E-mail address}: \texttt{yso1460@postech.ac.kr}

}}

\begin{document}

\maketitle

\begin{abstract}
The main result is an obstruction to the presence of contact regions for c-symplectic confoliations in any odd ambient dimension.
As a first application, we prove a $5$-dimensional analogue (under stronger assumptions) of a Reeb stability theorem for confoliations in dimension $3$ due to Eliashberg and Thurston.
Refining work of the first author with Cardona, we further give two new notions of contact approximations of c-symplectic confoliations in any odd ambient dimension generalizing the $3$-dimensional notion of Eliashberg--Thurston. 
As a second application of our main theorem, we then give an example, in any ambient odd dimension, of a c-symplectic (and in fact strong symplectic) foliation that cannot be contact approximated, in either of these two senses.
This relies on the property, of independent interest, that contact structures approximating (in these senses) a weakly c-symplectic confoliation are also weakly fillable.
The proofs of all these statements rely on pseudo-holomorphic curve theory.
Lastly, we describe several natural classes of examples of these types of contact approximations of c-symplectic confoliations.
\end{abstract}




\section{Introduction}
\label{sec:intro}

There are two types of hyperplane distributions on smooth odd-dimensional manifolds that appear naturally in several contexts in mathematics and physics: (regular, corank $1$) \emph{foliations}, and \emph{contact structures}.
The first ones are \emph{integrable}, i.e.\ admit through every point a local hypersurface which is tangent to the foliation at all points. 
Contact structures are instead \emph{maximally non-integrable}, i.e.\ the maximal dimension of a local submanifold everywhere tangent to the contact structure is equal to half the rank of the latter, and hence of codimension strictly higher than $1$ (in ambient odd dimension at least $3$). 
In an equivalent and more computation-friendly manner, (coorientable, regular, corank $1$) foliations and (positive, coorientable) contact structures on an ambient (oriented) $(2n+1)$-manifold can be described as kernels of a $1$-form $\alpha$ satisfying, respectively, $\alpha\wedge \d\alpha = 0$ and $\alpha\wedge \d\alpha^n>0$.

Despite this extreme difference in local behavior, in ambient dimension $3$ Eliashberg and Thurston found in \cite{EliashbergThurston_Confoliations} a very striking connection between these two types of hyperplane fields.
Namely, it is proven in \cite{EliashbergThurston_Confoliations} that, besides the foliation by $2$-spheres on $S^1\times S^2$, any (codimension $1$) $C^2$-foliation on a $3$-dimensional manifold can be approximated, in the space of plane fields equipped with its $C^0$-topology, by contact structures.
The techniques used are very much of topological nature, exploiting deep structural results of corank $1$ foliations, such as Sacksteder's theorem \cite{Sacksteder_FoliationsPseudogroups} and Reeb stability \cite[Theorem 2.4.1]{CandelConlon_FoliationsIBook}, as well as their specializations to the case of foliations (and ``laminations'') by surfaces on $3$-manifolds.

In fact, this approximation is also geometrically meaningful.
More precisely, and of particular interest for this work, Eliashberg--Thurston also proved that, in ambient dimension $3$, a ``rigid'' foliation can only be approximated by ``rigid'' contact structures:
if $(\xi_n)_{n\in \bN}$ is a sequence of contact structures approximating a \emph{taut} foliation $\cF$, then, for $n$ big enough, $\xi_n$ is \emph{weakly symplectically semi-fillable}, and hence also \emph{tight}.
Here, ``taut'' means that, through each point of the manifold, there is a closed loop transverse to the foliation at each of its points; ``weakly symplectically semi-fillable'' means that the contact structure lies at the boundary of a symplectic manifold $(W,\omega)$ with disconnected boundary, and that $\omega\vert_{\xi_n}>0$ for $n>0$ large enough; lastly, ``tight'' means that it admits no embedded \emph{overtwisted disk}, as defined in \cite{Eliashberg_ClassificationOvertwisted3D}. 
(The fact that weakly fillable contact structures are necessarily tight is due to  Eliashberg and Gromov \cite{Eliashberg_FillingByHoloDiscs}.)

In fact, \cite{EliashbergThurston_Confoliations} explore also other relationships between the geometric properties of a foliation $\cF$ and those of the approximating family of contact structures $(\xi_n)_{n\in \bN}$.
These results generally go in the direction of formalizing the idea that known rigid (i.e.\ geometric) phenomena/notions in foliation theory are ``inherited'', via contact approximations, to known rigid phenomena/notions in contact topology.

\smallskip

In order to make the bridge between foliations and contact structures, Eliashberg--Thurston introduce in their work the notion of $3$-dimensional \emph{confoliation}.
This is a plane field on a $3$-manifold which is defined by a one-form $\alpha$ satisfying $\alpha\wedge\d\alpha\geq 0$.
(Here, the plane field is assumed to be coorientable, hence $\alpha$ is defined on all of the ambient manifold.) 
Note that this algebraic condition on differential forms clearly generalizes the foliated and contact ones in this ambient dimension.
In fact, the above recalled approximation result from \cite{EliashbergThurston_Confoliations} holds more generally for confoliations.

In this context, and also of particular interest for the present work, Eliashberg and Thurston \cite{EliashbergThurston_Confoliations} proved a ``Reeb stability result'' for $3$-dimensional confoliations: on an ambient $3$-dimensional manifold, if a confoliation admits a $2$-sphere tangent to it at all points, then it must be the foliation by $2$-spheres on the ambient $3$-manifold $\bS^1\times \bS^2$.
The methods used are very low-dimensional in nature: for instance a key point is the analysis of the induced foliation on a certain (local) surface transverse to the confoliation.

\smallskip

After the pioneering work \cite{EliashbergThurston_Confoliations}, many authors have worked on elucidating both this approximation phenomenon and the topological properties of confoliations themselves; see e.g.\  \cite{Hind_FillingByHoloDiscsWeaklyPseudoConvex,Colin_TightContactToroidalAndApproxOfFol,Vogel_RigidityVersusFlexibilityTightConfol,Vogel_UniquenessApproximatingContact,Bowden_ApproximatingC0FolByCont,Bowden_ContactDeformTautFol,ColinKazezRoberts_TautFoliations,ColinHonda_FoliationsContactTheirInteractions}.

\bigskip

The focus of this work is the higher dimensional setting.
While the literature on high-dimensional confoliations is certainly much less developed than the $3$-dimensional one, there are some isolated works \cite{AltschulerWu_DeformingConfoliations,DatheRukimbira_FoliationsAndContactStructures,DatheKhoule_DeformationsFoliationsToContact} where the focus is on \textbf{smooth confoliations}, i.e.\ hyperplane fields that can be defined by $1$-forms $\alpha$ satisfying $\alpha\wedge \d\alpha^n\geq 0$, and their \textbf{contact smooth approximations and deformations}, i.e.\ families of contact structures parametrized by $\bN$ or by $(0,1]$ and converging smoothly to the smooth confoliation as the parameter goes, respectively, to $+\infty$ and to $0$.
However, these notions and results seem to not consider any symplectic-type data on the smooth confoliations, and as a consequence to not ask anything, in the case of contact smooth approximations, about the convergence (or at least an asymptotic control) of the canonical conformal symplectic structures on the contact structures as the parameter goes to $+\infty$ (or to $0$, in the case of deformations).
In this work, we follow instead the approach from \cite{CardonaGironella_ConfoliationsHighDim}, aimed at remedying this issue by equipping smooth confoliations with a certain symplectic-type data that allows for an effective study of their relations with symplectic manifolds analogously to symplectic fillability for contact structures.
We jump here straight to recalling the main notions from \cite{CardonaGironella_ConfoliationsHighDim} that we will need in this paper, and refer the reader to \cite{CardonaGironella_ConfoliationsHighDim} for a more in-depth comparison with previous works and description of the motivations behind the notion introduced there.

\medskip

The approach of \cite{CardonaGironella_ConfoliationsHighDim} is that of considering \textbf{c-symplectic confoliations}, that are smooth confoliations $\eta$ equipped with \emph{cone-symplectic structures}, \textbf{c-symplectic structures} in short.
These are defined to be a cone $\CS_{\eta,\mu}$ of non-degenerate $2$-forms on $\eta$ spanned by the \textbf{conformal partial-symplectic structure} $\CpS_\eta$ induced by the restriction $\d\beta\vert_\eta$ of the differential of a defining $1$-form $\beta$ for $\eta$, and by a non-degenerate $2$-form $\mu$ on $\eta$ whose restriction to the \textbf{characteristic foliation} $\cK_\eta:=\ker(\d\beta\vert_\eta)$ of $\eta$ is also non-degenerate.

Let us make some quick comments to slightly clarify this notion.
First, $\CpS_\eta$ is independent of the choice of defining $1$-form $\beta$ for $\eta$. 
Second, $\CpS_\eta$ is in general degenerate: for instance, if $\eta$ is a foliation $\CpS_\eta$ has kernel coinciding with the whole $\eta$.
Third, for $\CS_{\eta,\mu}$ to be a cone made entirely by non-degenerate $2$-forms, $\mu$ and $\CpS_\eta$ need to be \textbf{symplectically compatible}, i.e.\ we must have that $\mu+t\d\beta\vert_\eta$ is non-degenerate on $\eta$ for all $t\geq 0$.
(Written in this form, it is in particular clear that the definition of c-symplectic confoliation is directly inspired by that of weak symplectic filling for contact structures from \cite{MassotNiederkrugerWendl_WeakFillings}.)
Lastly, the characteristic foliation is a distribution that can be associated to any hyperplane field and not just to (smooth) confoliations; in general, it is singular, in the sense that the rank of $\ker(\d\beta\vert_\eta)$ can change from point to point, and the fact that it is a foliation simply means that it is \textbf{involutive}, i.e.\ the Lie bracket of any pair of vector fields tangent to it at all points is also tangent to it at all points.

With this clarified, we also recall that in \cite{CardonaGironella_ConfoliationsHighDim} a c-symplectic confoliation is furthermore called
\textbf{geodesible} if there is a defining $1$-form $\alpha$ for the underlying hyperplane distribution and a vector field $R$ so that $\alpha(R)=1$ and $\iota_R\d\alpha=0$,
and \textbf{tame} if there is a complex structure $J$ on $\eta$ that is tamed by $\mu$ and by $\CpS_\eta$ at the same time.
Here, ``tamed by $\mu$'' means that $\mu(v,Jv)>0$ for all non-vanishing $v\in \eta$, as is usual in symplectic topology; ``tamed by $\CpS_\eta$'' means instead that $\d\beta(v,Jv)\geq 0$, with equality if and only if $v\in \cK_\eta$.
(Recall from \cite{CardonaGironella_ConfoliationsHighDim} that tameness is in general not automatic for a c-symplectic confoliation.)

We point out that \cite{EliashbergThurston_Confoliations} already proposed a tentative notion of confoliations in higher dimensions, asking for the existence of a complex structure $J$ satisfying $\d\beta(v,Jv)\geq 0$ for all $v\in \eta$.
As explained in \cite{CardonaGironella_ConfoliationsHighDim}, the condition of being geodesible and the additional control asked by the notion of tame (c-symplectic) confoliation on the vectors in the kernel of the quadratic form given by $\d\beta(\cdot, J\cdot)$ allows, with a \textbf{weak maximum principle} (from \cite[Lemma 3.4]{CardonaGironella_ConfoliationsHighDim}, and recalled in \Cref{lem:weak_maximum_principle}) for an effective control of pseudo-holomorphic curves near the boundary in the presence of a ``symplectic filling of a confoliation''.
This control allowed for instance to obtain in \cite{CardonaGironella_ConfoliationsHighDim} a certain generalization to high-dimensional c-symplectic confoliations of the fact that symplectically fillable contact structures are tight.

There are several notions of symplectic fillability for confoliations introduced in \cite{CardonaGironella_ConfoliationsHighDim}, mimicking the distinct fillability notions for contact structures.
A symplectic manifold $(W,\Omega)$ was called a \textbf{weak symplectic filling} of a connected c-symplectic confoliated manifold $(M=\partial W,\eta,\CS_{\eta,\mu})$ if $\Omega\vert_{\cK_\eta}=f\mu\vert_{\cK_\eta}$ for some positive function $f$, $\Omega\vert_\eta$ is non-degenerate and $\Omega\vert_{\eta}$ is symplectically compatible with $\CS_{\eta,\mu}$, i.e.\ the (open) cone generated by $\Omega\vert_{\eta}$, $\mu$ and $\CpS_\eta$ is made of non-degenerate $2$-forms on $\eta$.
One talks instead of \textbf{quasi-strong symplectic filling} if instead $\Omega\vert_\eta\in \CS_{\eta,\mu}$, i.e.\ $\Omega\vert_{\eta} = \mu + f\d\beta\vert_\eta$ for some positive function $f$ and defining $1$-form $\beta$ for $\eta$ (note that here we could ask the form $\mu + \d\beta\vert_\eta$, up to replacing $\beta$ by $f\beta$).
If a preferred extension $\omega\in \Omega^2(M)$ of $\mu$ to $M$ is available, e.g.\ if $(\eta,\CS_{\eta,\mu})$ comes from a \textbf{strong symplectic foliation} $(\cF,\omega)$ (i.e.\ a corank $1$ regular foliation $\cF$ equipped with $\omega\in \Omega^2(\cF)$ so that $\d\omega = 0$ and $\omega\vert_\cF$ is non-degenerate), then we also talk about \textbf{strong symplectic filling} of $(\eta,\CS_{\eta,\mu},\omega)$ if instead $\Omega = \omega + f\d\beta$ as $2$-forms on $M$, again for some function $f>0$ and some defining $1$-form $\beta$ for $\eta$. 
Moreover, analogous notions can be defined for \textbf{semi-fillings}, i.e.\ fillings where the boundary is possibly disconnected, and for \textbf{cobordisms}, where one also allows for \emph{concave} type boundary components, i.e.\ components where the coorientation of $\eta$ and the orientation given by $\Omega\vert_\eta$ induce the opposite of the ambient orientation as boundary component of the filling.

\bigskip

With the context and notions from \cite{CardonaGironella_ConfoliationsHighDim} recalled, we are now ready to enter into the new contributions in this paper.
Our main aim is to give a high-dimensional analogue of two of the results recalled above from Eliashberg and Thurston's pioneering work, namely the fact that contact structures approximating a taut foliation in ambient dimension $3$ are weakly symplectically fillable \cite[Corollary 3.2.5]{EliashbergThurston_Confoliations}, and the Reeb stability result for $3$-dimensional confoliations \cite[Proposition 1.3.9]{EliashbergThurston_Confoliations}.

Both of these high-dimensional results rely on a rigidity property, of independent interest, for weakly (semi-)fillable tame c-symplectic confoliations admitting a closed leaf foliated by symplectic spheres: namely such confoliations have no contact regions.

We now give the precise statements of these results, and comment on each of them, starting with this last mentioned rigidity property.

\subsection{An obstruction to the presence of contact regions}
\label{sec:intro_obstruction_to_presence_contact_regions}

We work here under one of the following two hypotheses:
\begin{hyp*}[WeakSemiFill]
    $(M,\eta,\CS_{\eta,\mu})$ admits a semi-positive weak symplectic semi-filling $(W,\Omega)$.
    \label{hyp:weak_filling}
\end{hyp*}
\begin{hyp*}[CobordSHS]
    There is a semi-positive symplectic manifold $(W,\Omega)$ with weak convex boundary $(M,\eta,\CS_{\eta,\mu})$ and with strong concave boundary a stable Hamiltonian structure with Reeb vector field without contractible orbits.
    \label{hyp:cobordism_from_hypertight_shs}
\end{hyp*}

Recall that a symplectic manifold $(X^{2n},\Omega)$ is called \textbf{semi-positive} if there is no element $A\in H_2(X;\bZ)$ in the image of the Hurewicz homomorphism $h_2\colon \pi_2(X)\to H_2(X;\bZ)$ such that $\Omega(A)>0$ and $3-n\leq c_1(A)<0$, where $c_1$ is the first Chern class associated to $\Omega$.

These assumptions should not come as a surprise for experts in the field. 
Indeed, they will be used to guarantee compactness of a certain moduli space of pseudo-holomorphic spheres in the proof of the following result.

\begin{thm}
    \label{thm:rank_char_fol_at_least_two}
    Let $(\eta,\CS_{\eta,\mu})$ be a geodesible tame c-symplectic confoliation on a closed connected $(2n+1)$-dimensional manifold $M$.
    Assume that the following properties hold.
    \begin{enumerate}
        \item There is a symplectic manifold $(W,\Omega)$ as in Hypothesis \hypref{hyp:weak_filling} or \hypref{hyp:cobordism_from_hypertight_shs}.
        
        \item There is a closed submanifold $L\subset M$ of dimension $2k$, with $1\leq k\leq n$, having trivial normal bundle in $M$ and such that:
        \begin{enumerate}
            \item\label{item:rank_2k_near_L__thm_rank_at_least_two}  $TL\subset \eta\vert_L$ (and hence $TL\subset \cK_\eta\vert_L$), and near $L$ the characteristic foliation $\cK_\eta$ has trivial holonomy and rank constant equal to $2k$;
            \item\label{item:split_and_sympl_factors__thm_rank_at_least_two} $L$ is diffeomorphic to a split product $\bS^2\times X$ of the $2$-sphere and a closed connected $(2k-2)$-manifold $X$, in such a way that 
            $\mu\vert_L = \omega_{\bS^2} + \omega_X$ for an area form $\omega_{\bS^2}$ on $\bS^2$ and a symplectically aspherical symplectic form $\omega_X$ on $X$, and that the $\bS^2$-factor induces a primitive homology class in $W$.
        \end{enumerate}
    \end{enumerate} 
    Then, $\eta$ is nowhere contact, i.e.\ $\cK_\eta$ has rank at least $2$ everywhere.
\end{thm}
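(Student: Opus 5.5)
The strategy is a Niederkrüger-type \emph{plastikstufe} argument, adapted to the confoliated setting through the weak maximum principle (\Cref{lem:weak_maximum_principle}).

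\textbf{Setting up the curves.} I argue by contradiction: assume there is a point $p\in M$ where $\cK_\eta$ has rank $0$, i.e.\ $\eta$ is contact near $p$. Using the trivial normal bundle of $L$, the trivial holonomy of $\cK_\eta$ and its constant rank $2k$ near $L$, I produce a local model for a neighbourhood of $L$. In this model, the nearby leaves of $\cK_\eta$ are copies $\bS^2\times X\times\{z\}$, and the form $\mu$ restricts to split symplectic forms on them. I then collar $W$ near $M$ and choose an almost complex structure $J$ on $W$ with the following properties:
\begin{enumerate}
\item it is compatible with $\Omega$;
\item near $M$ it is adapted to the confoliation in the sense of the tameness assumption, i.e.\ it preserves $\eta$, it is tamed by $\mu$ and by $\CpS_\eta$, and it sends the Reeb direction of the geodesible form to the collar direction;
\item near $L$ it is split, so that the spheres $\bS^2\times\{x\}$ inside the leaves near $L$ are $J$-holomorphic.
\end{enumerate}
The resulting spheres $u_{x,z}$ form a smooth family. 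Because $\omega_X$ is aspherical and the structure is split, they are Fredholm regular with index matching the dimension of the parameter space $X\times(-\epsilon,\epsilon)^{2n+1-2k}$, once we quotient by $\mathrm{Aut}(\bS^2)$. Let $\cM$ be the connected component of the moduli space of $J$-spheres in class $A=[\bS^2]$ (with one marked point) containing this family.

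\textbf{Compactness.} The sphere class $A$ has $\Omega(A)=\int\omega_{\bS^2}>0$ and is primitive. Together with semi-positivity and the asphericity of $\omega_X$, this rules out multiple covers and bubbling in codimension at most $1$. Under \hypref{hyp:cobordism_from_hypertight_shs}, I use SFT compactness: the concave end has no contractible Reeb orbits, so no breaking can occur there. Curves cannot touch $M$ except in the standard family: by the weak maximum principle (\Cref{lem:weak_maximum_principle}), a $J$-sphere touching $M$ from inside must be entirely contained in a leaf of $\cK_\eta$. Hence every curve in $\cM$ touching the collar lies inside $\cK_\eta$-leaves, and in particular cannot reach the contact region, where $\cK_\eta=0$ contains no surfaces. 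The conclusion is that $\cM$ is a manifold whose only boundary-type points are the standard family near $L$, and whose noncompactness can only come from codimension-$2$ bubbling.

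\textbf{Contradiction.} Consider the evaluation map $\ev\colon\cM\to W$. I pick a path $\gamma$ in $M$, transverse to $\cK_\eta$ leaves except where needed, going from a point close to $L$ to the contact point $p$. Pushing $\gamma$ slightly into $W$ and doing a degree/cobordism argument, I compare the number of curves passing through a generic point of $\gamma$ on the two sides. Near $L$ this count is exactly $1$: there is a unique standard sphere through the point. Near $p$ it is $0$, since, as shown above, no curve approaches the contact region. Genericity and the compactness of the previous step make this count invariant along $\gamma$, which is the desired contradiction. Alternatively, one can phrase this as $\ev$ being a proper degree-$1$ map onto a neighbourhood, whose boundary behaviour forces the curves to foliate the whole region, contradicting the absence of curves near $p$.

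The main obstacle is the interaction between the maximum principle and this global count. The difficulty is to show that curves coming close to $M$ at points where $\cK_\eta$ has varying rank (outside the constant-rank neighbourhood of $L$) are controlled. My first attempt would be to use tameness to obtain the strict inequality $\d\beta(v,Jv)>0$ off $\cK_\eta$, so that tangency with $M$ forces $du$ to take values in $\cK_\eta$. Integrability of $\cK_\eta$ near tangency points would then have to be extracted from the geodesible structure.
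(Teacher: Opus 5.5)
Your strategy is the same as the paper's: a one-dimensional family of $A$-spheres over a path from $L$ to the contact point $p$, with one end at the model sphere through the starting point and no curves near $p$. However, the step ``genericity and compactness make the count invariant along $\gamma$'' fails as you set it up.

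You take $\gamma$ in $M$ and push it only slightly into $W$, i.e.\ into the collar where $J$ is constrained to be adapted to $(\eta,\CS_{\eta,\mu})$. There the collar coordinate $t$ is weakly plurisubharmonic. On the preimage of the collar, $t\circ u$ attains an interior maximum, so \Cref{lem:weak_maximum_principle} forces every $J$-sphere meeting the collar to lie in a $t$-slice, tangent to $\cK_\eta$. Thus all curves through your path live exactly where $J$ cannot be perturbed. Outside the product neighbourhood of $L$, nothing guarantees their Fredholm regularity or codimension-$2$ control of nodal limits through $\gamma$, so the count could a priori jump where $\rk\cK_\eta$ varies. For the same reason, your global claim that non-compactness comes only from codimension-$2$ bubbling is unjustified, and so is the alternative ``proper degree-one map'' formulation. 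Nodal curves contained in $M$ need not be regular, and the paper explicitly does not obtain a full pseudocycle (\Cref{rmk:not_full_pseudocycle_single_family}).

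The missing idea is that the obstacle in your last paragraph should be avoided, not solved. Take $\gamma$ from $p_0\in L$ to $p$ with $\gamma((0,1))\subset W\setminus M$, meeting the collar $[-\epsilon,0]\times M$ only inside the product neighbourhood $\overline{U_0}$ of $L$ and inside $V=(-\epsilon,0]\times Op(p)$. In between, let it run through $W\setminus([-\epsilon,0]\times M)$, where $J$ is chosen generic. Then every curve through $\gamma$ is either a model sphere or has an injective point in the generic region; it is simple because $A$ is primitive, hence regular. The same case analysis on nodal components, together with semi-positivity, bounds the dimension of the limit set near $\gamma$ by $\dim W-2$. After perturbing $\gamma$ off this set, $\ev^{-1}(\gamma((0,1)))$ compactifies to a $1$-manifold with a single boundary point, which is the contradiction.

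This also needs the uniqueness statement of \Cref{lem:uniqueness_lemma}: every sphere of energy at most $\Omega(A)$ meeting a neighbourhood of $L$ is a model sphere. It is proved via the maximum principle, Gromov compactness and regularity of the model family, and you use it implicitly when asserting the count near $L$ is exactly $1$.

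Finally, your model family must include the spheres pushed into the collar, parametrized by $(t_0,s_0,v_0,x_0)$, a space of dimension $2n$ equal to the index. The space $X\times(-\epsilon,\epsilon)^{2n+1-2k}$ you wrote has dimension $2n-1$ and contains no sphere through an interior point near $L$.
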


Note that the split factors of $L$ are symplectic with respect to $\Omega|_L$.
Indeed, by the definition of weak filling,
\[
\Omega|_{\cK_\eta}=f\,\mu|_{\cK_\eta}
\]
for some positive function $f$, while
\[
\cK_\eta|_L=TL.
\]
The explicit requirement $\mu\vert_L=\omega_{\bS^2}+\omega_X$ forces $f$ to be constant on $L$, and hence the restriction of $\Omega$ to $L$ is of the same split form.

\smallskip

The proof of \Cref{thm:rank_char_fol_at_least_two} is based on the study of a moduli space of pseudo-holomorphic spheres stemming from the $\bS^2$-factor in the leaf $L$.
More precisely, we select $1$-dimensional sub-families of such spheres evaluating along a curve starting at the model spheres in $L$ and reaching any other given point of the boundary component $M$.
The weak maximum principle and the Hypotheses \hypref{hyp:weak_filling} and \hypref{hyp:cobordism_from_hypertight_shs} allow us to argue that such a $1$-dimensional moduli space can be arranged to avoid nodal degenerations in the interior of the filling.
We then conclude that there must be a (possibly broken) pseudo-holomorphic sphere passing through the endpoint of the chosen curve, and by the maximum principle (and some further genericity argument) we conclude that such endpoint of the curve must be a point where $\cK_\eta$ has rank at least $2$.

\begin{rmk}
    \label{rmk:alternative_assumptions}
    The main purpose of \Cref{thm:rank_char_fol_at_least_two} is to illustrate that restrictions on the nature of $\cK_\eta$ can be derived using pseudo-holomorphic curve techniques.
    We make no claim that the technical assumptions in \Cref{thm:rank_char_fol_at_least_two} above are optimal in any sense, as there are other situations in which a similar conclusion can be drawn.
    For instance, the assumption on symplectic asphericity of $\omega_X$ can be substituted with the fact that the area of the factor $\bS^2$ is minimal among all spheres with positive symplectic area in $L$.
    On the other hand, hypothesis \hypref{hyp:weak_filling} or \hypref{hyp:cobordism_from_hypertight_shs} are of a more fundamental nature for the methods of proof we employ.
\end{rmk}

\subsection{A Reeb-stability result for high-dimensional confoliations}
\label{sec:intro_reeb_stability}

Before stating the aforementioned Reeb stability type result, we give the main technical statement behind it, which is a refinement of \Cref{thm:rank_char_fol_at_least_two} that is of independent interest:

\begin{thm}
    \label{thm:rank_char_fol_at_least_four}
    Let $(\eta,\CS_{\eta,\mu})$ be a geodesible tame c-symplectic confoliation on a closed connected $(2n+1)$-dimensional manifold $M$.
    Assume that the following properties hold.
    \begin{enumerate}
        \item There is a symplectic manifold $(W,\Omega)$ as in Hypothesis \hypref{hyp:weak_filling} or \hypref{hyp:cobordism_from_hypertight_shs}.
        
        \item There is a closed submanifold $L\subset M$ of dimension $2k$, with $2\leq k\leq n$,  having trivial normal bundle in $M$ and such that
        \begin{enumerate}
        
            \item\label{item:rank_2k_near_L__thm_rank_at_least_four}  $TL\subset \eta\vert_L$ (and hence $TL\subset \cK_\eta\vert_L$), and near $L$ the characteristic foliation $\cK_\eta$ has trivial holonomy and rank constant equal to $2k$;
            
            \item\label{item:split_and_sympl_factors_and_min_energy__thm_rank_at_least_four} $L$ is diffeomorphic to a split product $\bS^2\times \bS^2\times X$, where $\bS^2$ is the $2$-sphere and $X$ is a closed connected $(2k-4)$-manifold, in such a way that 
            \begin{enumerate}
                \item\label{item:splitting_mu__thm_rank_at_least_four} 
                $\mu\vert_L = \omega_{\bS^2} + \omega_{\bS^2} + \omega_X$ for an area form $\omega_{\bS^2}$ on $\bS^2$ and a symplectically aspherical symplectic form $\omega_X$ on $X$ and $\omega_{\bS^2}(\bS^2)$ is the minimal $\Omega$-energy of a spherical class in $M$,

                \item\label{item:homology_classes__thm_rank_at_least_four} and the two $\bS^2$ factors define different and primitive homology classes in $W$.
            \end{enumerate} 

        \end{enumerate}
    \end{enumerate} 
    Then, $\cK_\eta$ has rank at least $4$ everywhere.
\end{thm}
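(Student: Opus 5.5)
We follow the same scheme as the proof of \Cref{thm:rank_char_fol_at_least_two}, now running the argument with both sphere factors of $L$. The intended mechanism: at a point $p$ where $\cK_\eta$ has rank exactly $2$, the $2$-plane $\cK_\eta(p)$ cannot be tangent to two independent families of $J$-spheres in different homology classes.

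First, apply \Cref{thm:rank_char_fol_at_least_two} with $L=\bS^2\times(\bS^2\times X)$ and with $\bS^2\times X$ playing the role of $X$. This gives that $\cK_\eta$ has rank at least $2$ everywhere. The form $\omega_{\bS^2}+\omega_X$ is not aspherical, so asphericity fails. However, \Cref{rmk:alternative_assumptions} replaces it by minimality of the area of the first $\bS^2$-factor among spheres of positive area, and this is guaranteed by assumption (i). So it suffices to exclude points where the rank of $\cK_\eta$ is exactly $2$; since the characteristic foliation has even rank, this yields rank at least $4$.

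Next, choose an almost complex structure $J$ on $W$ adapted near $M$ as in the weak maximum principle (\Cref{lem:weak_maximum_principle}). Near $L$, the constant rank and trivial holonomy give a product neighbourhood in which $J$ is split and the spheres $\bS^2\times\{pt\}\times\{x\}$ and $\{pt\}\times\bS^2\times\{x\}$ are $J$-holomorphic. Consider the two moduli spaces $\cM_1,\cM_2$ of $J$-spheres with one marked point, in the classes $A_1\neq A_2$ of the two factors. Both classes are primitive, and by (i) both have the minimal energy among spherical classes. As in the previous proof, semi-positivity together with \hypref{hyp:weak_filling} or \hypref{hyp:cobordism_from_hypertight_shs} excludes bubbling, breaking and multiple covers along generic one-parameter families. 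This time minimal energy rules out nodal degenerations outright, since no sphere of smaller positive energy exists and primitivity excludes covers.

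Then, given any $p\in M$, join it to $L$ by a path $\gamma$ and run the cut-down moduli spaces $\cM_i\times_{\ev}\gamma$. As in \Cref{thm:rank_char_fol_at_least_two}, compactness and a cobordism or degree argument produce spheres $u_1,u_2$ through $p$ in the classes $A_1,A_2$. The maximum principle forces each $u_i$ to lie entirely in $M$, tangent to $\eta$ and in fact to $\cK_\eta$, because the $\CpS_\eta$-energy vanishes. Now suppose $\operatorname{rk}\cK_\eta(p)=2$. Then $du_1$ and $du_2$ both map onto the same $J$-complex line $\cK_\eta(p)$. Using geodesibility and the genericity of $\gamma$ (varying $\gamma$ in an open set of points of rank $2$, which is open in the set of rank-$2$ points), we want to show that near $p$ the rank-$2$ distribution is integrable with leaves being exactly these spheres. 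Hence the $A_1$- and $A_2$-spheres through nearby points coincide as images, which forces $A_1=A_2$ in $H_2(W)$ and contradicts (ii).

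The main obstacle is this last step: upgrading tangency at one point to equality of the images. I would first try to show that on the rank-$2$ locus the spheres through each point form a local foliation by $\cK_\eta$-leaves, using uniqueness of the integral surface of the involutive line field $\cK_\eta$ where it has rank $2$. For that I would handle rank-$2$ points in the closure of higher-rank regions by a density and limit argument, using compactness of the moduli spaces.
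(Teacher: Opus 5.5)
Your overall architecture matches the paper's: rank at least $2$ from \Cref{thm:rank_char_fol_at_least_two}, with your appeal to \Cref{rmk:alternative_assumptions} for the non-aspherical factor $\bS^2\times X$ being a fair extra precision. The remaining steps also match: two marked moduli spaces in the classes $A_1\neq A_2$, one-dimensional cut-down moduli spaces along a path from $L$ to a rank-$2$ point $p$, the maximum principle placing the resulting curves in $M$ tangent to $\cK_\eta$, and minimality/primitivity giving a single simple non-constant component $u_i$.

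The gap is the last step. You leave it open, and the route you sketch for it rests on a misconception. That route involves a local foliation by spheres, geodesibility, genericity of $\gamma$, and a density/limit argument at rank-$2$ points in the closure of higher-rank regions. But you have already shown $\rk\cK_\eta\geq 2$ everywhere, and the rank is upper semi-continuous (\Cref{lem:rank_function_upper_semi-continuous}). Hence every point $p$ of rank $2$ has an open neighbourhood $Op(p)\subset M$ on which the rank is exactly $2$. The set $\{\rk\cK_\eta\geq 4\}$ is closed, so no rank-$2$ point lies in its closure, and the case you want to handle by a limit argument does not occur. On $Op(p)$, $\cK_\eta$ is a smooth involutive rank-$2$ distribution, so Frobenius gives an integral surface $F$ through $p$. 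This surface is a $J$-complex curve, because $\cK_\eta$ is $J$-invariant by tameness.

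There is also no tangency to upgrade from a single point. \Cref{lem:weak_maximum_principle} gives that the image of $\d u_i$ lies in $\cK_\eta$ at every point of the domain. Hence each $u_i$ maps a connected neighbourhood of its marked point into $F$. A non-constant $J$-sphere is not constant on any open set, so this map is a non-constant holomorphic map into the surface $F$, hence open. Both images therefore contain a neighbourhood of $p$ in $F$. By \cite[Proposition 2.4.4]{McDuffSalamon_Jholomorphic} the images of $u_1$ and $u_2$ then coincide. Since both are simple, $A_1=[u_1]=[u_2]=A_2$, contradicting (ii). This is exactly how the paper concludes; no statement about spheres through nearby points, and no local foliation by spheres, is needed.

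A minor correction: hypothesis (i) only asserts minimality of $\omega_{\bS^2}(\bS^2)$ among spherical classes in $M$. So it does not rule out nodal degenerations in the interior of $W$ ``outright''. There you still need the semi-positivity and limit-set argument, as in \Cref{lem:evaluation_pseudocycles}. Minimality enters only for the limit curve through $p$, all of whose components lie in $M$ by the maximum principle.
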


\begin{rmk}
    \label{rmk:cannot_prove_rank_at_least_6}
    While it is natural to expect that the rank of $\cK_\eta$ should be at least $2k$ if the leaf $L$ has $k$ factors given by symplectic $2$-spheres, the arguments in the proof given in later sections unfortunately are not enough to conclude this.
    Roughly, the way we force the rank to be at least $4$ at most points is to prove that through (almost) every point pass two pseudo-holomorphic spheres, tangent to $\cK_\eta$ at all points by the weak maximum principle, and with $\bC$-linearly independent tangent space at the intersection point.
\end{rmk}

We now specialize to the case where the dimension of the ambient manifold $M$ is $5$.
Let us make some comments about the content of \Cref{thm:rank_char_fol_at_least_four} in this case.

First, in this case necessarily $k=2$ and $L$ is asked to be diffeomorphic to a product of two copies of the $2$-sphere $\bS^2$.
Moreover, in the same spirit as \Cref{rmk:alternative_assumptions}, the Hypothesis 
\ref{item:splitting_mu__thm_rank_at_least_four}
can be replaced by
\begin{enumerate}[
  label=2(b)i-bis.,
  labelwidth=!,
  labelsep=0.5em,
  leftmargin=!,
  align=left,
  labelindent=25pt
]
\item
\label{item:splitting_mu_bis__thm_rank_at_least_four} 
$\mu\vert_L = \omega_{\bS^2} + \omega_{\bS^2}$ for an area form $\omega_{\bS^2}$ on $\bS^2$.
\end{enumerate}
Note that $X=\{pt\}$ is always symplectically aspherical in this ambient dimension.
Hence, the only difference with respect to \ref{item:splitting_mu__thm_rank_at_least_four} is the fact that we allow here for homology classes of lower $\Omega$-energy than $\omega_{\bS^2}(\bS^2)$ in $M\setminus L$.
 
The proof in this case can be achieved (as explained in \Cref{sec:proof_rank_at_least_four_alternative_assumption}) by better leveraging Hypothesis \ref{item:homology_classes__thm_rank_at_least_four} and using the following general fact, which is of independent interest and generalizes the $3$-dimensional statement in \cite[Proposition 1.2.12]{EliashbergThurston_Confoliations}:

\begin{prop}
\label{prop:fully_foliated_part_closed}
    The following subset of $M$, called the \emph{fully foliated part} of $\eta$, is closed:
    \[
    \Fol(\eta) := \{\, p\in M \mid \text{ $\nexists$ arc $\gamma$ tangent to $\eta$, from $p$ to a point where $\rk\cK_\eta$ is not maximal} \, \} \, . 
    \]
\end{prop}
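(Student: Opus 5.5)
Since $M$ is closed, the plan is to show that the complement $U := M\setminus \Fol(\eta)$ is open. First, "maximal rank" of $\cK_\eta$ means rank $2n$, i.e.\ $\d\beta\vert_\eta\equiv 0$ at that point, so $\eta$ is integrable there. The set $C:=\{p\in M \mid \rk \cK_{\eta,p}<2n\}=\{p \mid \d\beta\vert_{\eta_p}\neq 0\}$ is open: it is the non-vanishing locus of the continuous section $\d\beta\vert_\eta$ of $\Lambda^2\eta^*$, and it does not depend on the choice of defining form $\beta$, since $\d(f\beta)\vert_\eta=f\,\d\beta\vert_\eta$. By definition,
\[
U=\{\,p\in M \mid \exists\ \text{arc } \gamma \text{ tangent to } \eta \text{ from } p \text{ to a point of } C\,\},
\]
where constant arcs are allowed, so that $C\subset U$. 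So the task is to show that $U$ is open.

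Fix $p\in U$ and an $\eta$-tangent arc $\gamma\colon[0,1]\to M$ with $\gamma(0)=p$ and $\gamma(1)=q\in C$. Since $C$ is open, there is a neighbourhood $V$ of $q$ contained in $C$. For $p'$ close to $p$, we want an $\eta$-tangent arc $\gamma'$ from $p'$ that ends inside $V$. We build it by a flow argument. Choose a vector field $Z$ tangent to $\eta$ extending $\dot\gamma$. Since $\gamma$ is merely a tangent arc, first perturb it to a piecewise-smooth embedded arc, or work in flow-box charts covering $\gamma$. Concatenate pieces of flows of smooth vector fields $Z_1,\dots,Z_m$ tangent to $\eta$, chosen so that $\gamma$ is, up to reparametrization, the concatenation $\phi^{Z_m}_{t_m}\circ\cdots\circ\phi^{Z_1}_{t_1}(p)$. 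This is possible locally by covering $\gamma$ with finitely many charts and using a frame of $\eta$ in each. Then define
\[
\gamma'(s):=\phi^{Z_j}_{s}\circ\phi^{Z_{j-1}}_{t_{j-1}}\circ\cdots\circ \phi^{Z_1}_{t_1}(p')
\]
for $s\in[0,t_j]$, on successive pieces. This gives an arc tangent to $\eta$ starting at $p'$. By continuous dependence of flows on initial conditions, its endpoint depends continuously on $p'$ and equals $q$ when $p'=p$. Hence, for $p'$ in a small neighbourhood of $p$, the endpoint lies in $V\subset C$, so $p'\in U$. This proves that $U$ is open and $\Fol(\eta)$ is closed.

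The only delicate step is making precise the class of arcs "tangent to $\eta$" in the definition and representing $\gamma$ by a finite concatenation of flow lines of $\eta$-tangent vector fields. If the arcs are only piecewise $C^1$ and tangent to $\eta$, a simpler alternative is to take a single time-dependent vector field $Z_t$ tangent to $\eta$ with $Z_t(\gamma(t))=\dot\gamma(t)$, obtained by extending $\dot\gamma(t)$ through a smooth local frame of $\eta$ along $\gamma$, and to use the continuous dependence of its non-autonomous flow $\Phi_{0,t}$ on the initial point. The arc $t\mapsto\Phi_{0,t}(p')$ is then tangent to $\eta$ and ends near $q$. I would use this non-autonomous version directly, since it avoids any decomposition of $\gamma$.
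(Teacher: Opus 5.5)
Your argument is correct and is essentially the paper's: both show that $M\setminus\Fol(\eta)$ is open by combining openness of the non-maximal-rank locus with continuous dependence on initial data for an ODE whose solutions are $\eta$-tangent. The only difference is technical. The paper lifts a translated base path horizontally in finitely many charts where $\eta$ is a connection and then smooths the concatenation. Your single non-autonomous $\eta$-tangent field $Z_t$ with $Z_t(\gamma(t))=\dot\gamma(t)$ is the right version to settle on, since autonomous flows can fail at self-intersections or zeros of $\dot\gamma$, and it handles the whole arc at once without any gluing or smoothing.
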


The second comment to make concerning \Cref{thm:rank_char_fol_at_least_four} in the ambient $5$-dimensional case is that Hypothesis \ref{item:rank_2k_near_L__thm_rank_at_least_four} readily implies, by the local Reeb stability theorem for foliations (see e.g.\ \cite[Theorem 2.4.1]{CandelConlon_FoliationsIBook}) that, near $L$, the $4$-dimensional leaves of $\eta$ are all diffeomorphic to $L$.
Likewise, the following is an immediate consequence of the global version of the Reeb stability theorem for foliations (see e.g.\ \cite[Theorem 6.1.5]{CandelConlon_FoliationsIBook}) and of the conclusion of \Cref{thm:rank_char_fol_at_least_four}:
\begin{cor}
\label{cor:reeb_stab_confoliations}
    Under the same assumptions as \Cref{thm:rank_char_fol_at_least_four}, with moreover $M$ of dimension $5$, one has that
    $M$ fibers over $\bS^1$ and $\eta=\cK_\eta$ is a strong symplectic foliation with leaves given by the fibers.
\end{cor}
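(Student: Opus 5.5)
We deduce \Cref{cor:reeb_stab_confoliations} from \Cref{thm:rank_char_fol_at_least_four} combined with the global Reeb stability theorem. The plan has three steps.

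\emph{Step 1: $\eta$ is a foliation.} Here $\dim M=5$, so $\eta$ has rank $4$ and $\cK_\eta\subset\eta$ has rank at most $4$. By \Cref{thm:rank_char_fol_at_least_four}, $\rk\cK_\eta\geq 4$ everywhere. Hence $\cK_\eta=\eta$ at every point, i.e.\ $\d\beta\vert_\eta\equiv 0$ for a defining $1$-form $\beta$. Equivalently $\beta\wedge\d\beta=0$, so $\eta$ is integrable and defines a coorientable codimension-$1$ foliation $\cF$ of $M$.

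\emph{Step 2: the leaves are the fibers of a fibration over $\bS^1$.} The submanifold $L\cong\bS^2\times\bS^2$ is tangent to $\eta$ and has dimension $4$, so it is a closed leaf of $\cF$. By Hypothesis \ref{item:rank_2k_near_L__thm_rank_at_least_four}, this leaf has trivial holonomy. In fact, since $\pi_1(\bS^2\times\bS^2)=0$, the holonomy of $L$ is automatically trivial.

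Now apply the global Reeb stability theorem \cite[Theorem 6.1.5]{CandelConlon_FoliationsIBook}. For a transversely orientable codimension-$1$ foliation of a closed connected manifold with a compact leaf of finite fundamental group, it gives that every leaf is diffeomorphic to $L$ and that $M$ fibers over $\bS^1$ with the leaves as fibers. Transverse orientability holds because $\eta$ is cooriented by $\beta$, and $M$ is closed and connected by assumption. The one point to check is that the theorem is applied in the form with a $1$-dimensional base: the quotient of $M$ by the foliation is a closed $1$-manifold, hence $\bS^1$ since $M$ is connected and cooriented.

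\emph{Step 3: the symplectic structure.} We must show that $(\cF,\mu)$, or a suitable extension, is a strong symplectic foliation, i.e.\ carries a leafwise symplectic form that is closed on $M$. Nondegeneracy of $\mu$ on $\eta=\cK_\eta$ is part of the c-symplectic data. For closedness, we take $\omega=\Omega\vert_M$. In the weak filling (or cobordism) setting, $\Omega\vert_{\cK_\eta}=f\mu\vert_{\cK_\eta}$ with $f>0$, so $\Omega\vert_\eta$ is nondegenerate on $\eta=\cK_\eta$. Moreover $\Omega\vert_M$ is closed, since $\Omega$ is a closed form on $W$.

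Thus $(\cF,\Omega\vert_M)$ is a strong symplectic foliation. Its induced c-symplectic cone is $\CS_{\eta,\mu}$, because on a foliation $\CpS_\eta$ vanishes and the cone reduces to positive multiples of $\mu\simeq f^{-1}\Omega\vert_\eta$.

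The only potentially delicate point is this identification of the symplectic data. Whether "strong symplectic foliation" is meant with $\mu$ itself or up to the conformal factor depends on conventions, and using $\Omega\vert_M$ sidesteps the issue. Steps 1 and 2 are immediate.
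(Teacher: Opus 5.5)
Your proposal is correct and follows the same route as the paper, which calls the corollary an immediate consequence of \Cref{thm:rank_char_fol_at_least_four} (forcing $\cK_\eta=\eta$ in dimension $5$) and the global Reeb stability theorem applied to the simply connected closed leaf $L\cong\bS^2\times\bS^2$. Your Step 3 makes explicit what the paper leaves implicit: the strongness comes from using the closed form $\Omega\vert_M$ as the extension, which is nondegenerate on $\eta$ because $\Omega\vert_{\cK_\eta}=f\mu\vert_{\cK_\eta}$.
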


    It is important to point out that, while the same argument used in the proof of \Cref{cor:reeb_stab_confoliations} would give an analogous statement also in ambient dimension $3$, such a result would be considerably weaker than the Reeb stability result for $3$-dimensional confoliations proven by Eliashberg and Thurston in \cite[Proposition 1.3.9]{EliashbergThurston_Confoliations}.
    This is not an accident, and is due to the fact that, contrary to \cite{EliashbergThurston_Confoliations}, we use pseudo-holomorphic curve techniques, which unfortunately require some more strict assumptions, namely Hypothesis \hypref{hyp:weak_filling} or \hypref{hyp:cobordism_from_hypertight_shs}, on the c-symplectic confoliation.
    It is unclear to the authors whether a more direct adaptation of the low-dimensional approach in the style of \cite{EliashbergThurston_Confoliations} is possible in our higher dimensional setting; the main underlying complication is the fact that, as is typical in symplectic topology, manipulating symplectic forms in ambient dimension at least $4$ is harder than on surfaces.

We also mention that there is a Reeb stability type result for symplectic foliations in the literature, due to Crainic and Mărcuț \cite{CrainicMarcut_ReebThurstonStabilitySymplFol}, which is of a more geometric nature and aimed instead at describing when a compact leaf admits a tubular neighborhood where the leafwise symplectic form also admits a normal form.

\subsection{Contact approximations and obstructions thereof}
\label{sec:intro_contact_approximations_and_obstructions}

The notion of contact smooth approximation studied in \cite{AltschulerWu_DeformingConfoliations,DatheRukimbira_FoliationsAndContactStructures,DatheKhoule_DeformationsFoliationsToContact} obviously doesn't give any control on symplectic properties, such as weak symplectic fillability, for the approximating contact structures.
The motivation for this part of the paper is exactly to give a notion of contact approximation of confoliations which allows to recover the stability of weak fillability under contact approximations of confoliations in dimension $3$ from \cite[Corollary 3.2.5]{EliashbergThurston_Confoliations} mentioned above.

We should first of all mention that in \cite[Chapter 2]{Toussaint_PhDThesis} there is an in-depth study of a notion of contact approximations $(\xi_m)_{m\in \bN}$ of conformal symplectic \emph{foliations} $(\cF,\CS_{\mu})$ (and not general confoliations), where $\mu$ is a non-degenerate $2$-form on $\cF$.
There, the conformal symplectic structures $\CS_{\xi_m}$ on an approximating $\xi_m$ are asked to converge to $\CS_\mu$ on $\cF$.
The advantage of this notion is that the obstruction to contact approximations in ambient dimension $3$ from \cite{EliashbergThurston_Confoliations} can be ``stabilized'' to higher dimensions: namely, if $(M,\cF,\CS_\mu)$ has as $3$-dimensional submanifold $(\bS^1\times \bS^2,\CS_{\omega_{\bS^2}})$, then it cannot be contact approximated in this sense.
In \cite[Section 2.5]{Toussaint_PhDThesis} there is also an obstruction, due to Presas--Toussaint
and for this same notion of contact approximation, of a different nature than the $3$-dimensional one.

Furthermore, in \cite{CardonaGironella_ConfoliationsHighDim} the first author and Cardona already introduced a notion of contact approximations for c-symplectic confoliations.
As discussed in \cite[Section 7]{CardonaGironella_ConfoliationsHighDim}, besides the fact that this notion is also defined for confoliations and not just foliations, it is distinct from the one in \cite{Toussaint_PhDThesis} as it asks for a certain type of convergence only for the ``dominating term'' (as $m\to\infty$) of $\CS_{\xi_m}$.
On the other hand, this notion of contact approximation is unfortunately not strong enough to guarantee the desired stability of weak fillability (c.f.\ \Cref{exa:old_notion_approx_not_enough_for_weak_fill}).

In this paper we introduce two new notions of \textbf{contact approximation (and deformations)} of c-symplectic confoliations, namely \textbf{of strict and tame type}, both of which recover stability of weak fillability, as we state below.

The strict version is in spirit similar to the notion in \cite{CardonaGironella_ConfoliationsHighDim}. 
In particular, its definition is somewhat involved and requires checking the behavior of certain wedge products of differential forms involving contact forms defining $\xi_m$ and $\eta$, as well as (the restriction to $\cK_\eta$ of) the form $\mu$ directing the c-symplectic structure $\CS_{\eta,\mu}$ on $\eta$.
On the other hand, as we state below, all the interesting examples of contact smooth approximations of smooth confoliations from the literature analyzed in \cite{CardonaGironella_ConfoliationsHighDim} also satisfy this notion of contact strict approximation.

The notion of contact tame approximation, on the other hand, has a much more concise definition\footnote{This definition was already discussed by the first author with Cardona in the context of the work \cite{CardonaGironella_ConfoliationsHighDim}, where it hasn't been included due to the lack of examples and, previously, also of applications such as \Cref{thm:weak_fillability_stable_under_tame_contact_approx}. The authors are grateful to Robert Cardona for letting them include this definition in the present paper.}, based on convergence of a sequence of tamed complex structures on $\xi_m$ to one on $\eta$ tamed by $\CS_{\eta,\mu}$ (c.f.\ \Cref{def:contact_tame_approx_and_deform}), but unfortunately it is much harder to check in actual concrete and explicit cases, and it seems to be satisfied by less examples in the literature.

\medskip

We resume some notable examples of these two notions of contact approximations in the following proposition:
\begin{prop}
    \label{prop:examples_contact_approx_and_deform}
    \begin{enumerate}
    
    \item \label{item:examples_contact_approx_and_deform__fiber_sum_branched_cover} 
    Contact branched covers and contact fibered sums defined in \cite{Geiges_BranchedCoversFiberSums} are contact strict  $C^\infty$-deformations of a (strict) c-symplectic confoliation on the underlying smooth branched covers and fibered sums.
    
    \item \label{item:examples_contact_approx_and_deform__bourgeois}
    Let $\xi$ be a contact structure on $M^{2n-1}$.
    Then, Bourgeois contact structures \cite{Bourgeois_ToriAreContact} on $M\times \bT^2$, associated to any choice of supporting open book for $\xi$, are:
    \begin{itemize}
        \item contact strict $C^\infty$-deformations of the (strict) c-symplectic confoliation $(\eta=\xi \oplus T\bT^2,\CS_{\eta,\omega})$ on $M\times \bT^2$, where $\omega$ is any area form\footnote{To be precise, we defined above $\CS_{\eta,\mu}$ for any $\mu$ non-degenerate on $\eta$. Here, $\omega$ is non-degenerate only on $T\bT^2$, but adding any arbitrary multiple of $\d\beta$, with $\beta$ any one-form defining $\eta$, indeed makes it non-degenerate on all of $\eta$; hence, $\CS_{\eta,\omega}=\{\omega + f \d\beta \mid f\colon M \to \bR_{>0} \}$ is still a set of two-forms all of which are non-degenerate on $\eta$.} on $\bT^2$,
        \item and contact $C^\infty$-deformations, of both strict and tame type, of a (strict and tame) c-symplectic confoliation ``adapted'' to the natural ``spinal open book'' of the Bourgeois manifold.
    \end{itemize}

    \item \label{item:examples_contact_approx_and_deform__mori}  
    Mori's $(0,1]$-family of contact structures \cite{Mori_NoteMitsumatsuConstruction} on $\bS^5$ is a contact strict $C^\infty$-deformation of Mitsumatsu's symplectic foliation \cite{Mitsumatsu_SymplecticLawsonFoliation}, seen as a (strict) c-symplectic confoliation.

    \item \label{item:examples_contact_approx_and_deform__bertelson_meigniez} 
    The linear deformations (in the space of hyperplane fields) to contact structures of locally conformal symplectic foliations $(\cF,\omega)$ considered in \cite{BertelsonMeigniez_ConformalSymplecticFoliations}
    are contact $C^\infty$-deformations, of both strict and tame types, of $(\cF,\omega)$, seen as (strict and tame) c-symplectic confoliations.

    \item    \label{item:examples_contact_approx_and_deform__giroux_torsion}
    There is a (strict) c-symplectic confoliation of the form $(\eta,\CS_{\eta,\mu})$, where $\eta=\ker(\d x)$ with $(x,y)$ coordinates on $\bT^2$, such that each of the contact structures $\xi_k$ on the manifold of the form $M\times\bT^2$ constructed as in \cite[Theorem A]{MassotNiederkrugerWendl_WeakFillings} is a contact strict $C^\infty$-deformation of $(\eta,\CS_{\eta,\mu})$.
    \end{enumerate}
\end{prop}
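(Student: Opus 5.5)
The statement to prove is \Cref{prop:examples_contact_approx_and_deform}, a list of five examples. I will treat each item separately, following one template. First I write down explicitly the family of contact forms $\alpha_t$, $t\in(0,1]$, that gives the construction, and the limiting $1$-form $\alpha_0$ defining $\eta$. Next I identify the characteristic foliation $\cK_\eta$ and choose the $2$-form $\mu$ directing $\CS_{\eta,\mu}$. Finally I check the defining inequalities of contact strict deformation, and of tame deformation where claimed. For the strict type, the natural approach is to expand $\alpha_t\wedge\d\alpha_t^n$ in powers of $t$. One then shows that the leading-order term factors as $\alpha_0\wedge(\d\alpha_0)^{j}\wedge\mu^{n-j}$, up to a positive function, on the region where $\rk\cK_\eta=2(n-j)$. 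In addition, the restriction of $\d\alpha_t$ to $\xi_t$, rescaled by the appropriate power of $t$, must converge to an element of the closure of the cone spanned by $\mu\vert_{\cK_\eta}$ and $\CpS_\eta$. Almost all examples have the form $\alpha_t=\alpha_0+t\lambda$ with $\d\alpha_0$ and $\d\lambda$ explicit, so the expansion is finite and the computation is mechanical.

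Concretely, the items go as follows.

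\begin{enumerate}
\item Geiges' branched covers and fiber sums. The contact form is obtained by adding $t$ times a local model form near the branch or gluing locus. Away from this locus $\eta$ is already contact. Near it, $\cK_\eta$ is the tangent space of the codimension-$2$ symplectic submanifold, and $\mu$ is its symplectic form.
\item Bourgeois structures. The forms are $\alpha+t(\phi_1\d x-\phi_2\d y)$, where $(\phi_1,\phi_2)$ comes from the open book. One has $\eta=\xi\oplus T\bT^2$, $\cK_\eta=T\bT^2$ and $\mu=\d x\wedge\d y$. The $t$-expansion gives a leading term proportional to $\alpha\wedge\d\alpha^{n-1}\wedge\d x\wedge\d y$, with positive coefficient $\phi_1\d\phi_2-\phi_2\d\phi_1$ paired appropriately. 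This is exactly Bourgeois' computation. For the spinal version, $\eta$ is adapted so that $\cK_\eta$ is tangent to the pages and the spine. Tameness will be checked by building $J$ blockwise: from the open book on $\xi_t$, and standard on $\bT^2$.
\item Mori's family. Mori's construction already deforms Mitsumatsu's leafwise symplectic form. I would take $\mu$ to be that leafwise form and read off the leading term from Mori's explicit formulas.
\item Bertelson--Meigniez. The deformation is linear, $\alpha+t\lambda$, with $\d\alpha=\theta\wedge\alpha$ and $\d\lambda\vert_\cF=\omega$ up to Lee-form corrections. The leading term is $\alpha\wedge\omega^n$. For tameness, I take $J$ compatible with $\omega$ on $\cF$ and transport it to $\xi_t$ by the projection along the Reeb-type direction.
\item Massot--Niederkr\"uger--Wendl. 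Their forms on $M\times\bT^2$ interpolate between $\d x$ and a contact form. I set $\eta=\ker \d x$ and choose $\mu$ to be the restriction of their weakly filling symplectic form, which by construction dominates as $t\to0$.
\end{enumerate}

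The main obstacle is the tame-type claims in items \ref{item:examples_contact_approx_and_deform__bourgeois} and \ref{item:examples_contact_approx_and_deform__bertelson_meigniez}. There one needs a continuous family of complex structures $J_t$ on $\xi_t$, tamed by $\d\alpha_t$, that converges to a $J_0$ tamed by both $\mu$ and $\CpS_\eta$. The difficulty is the degenerate directions of $\CpS_\eta$: in these directions, taming by $\d\alpha_t$ holds only at order $t$. I would first choose $J_0$ preserving the splitting $\eta=\cK_\eta\oplus\cK_\eta^{\perp}$ (complement with respect to $\mu$), then define $J_t$ by identifying $\xi_t\cong\eta$ through the orthogonal projection, and verify taming. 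This reduces to checking that $\d\alpha_t(v,J_tv)=O(1)$ on the complement and $=t\,\mu(v,J_0v)+O(t^2)$ on $\cK_\eta$. The cross terms must then be absorbed by Cauchy--Schwarz with constants uniform in $t$, and this is the step that fixes the normalization. The strict-type verifications are routine by comparison.
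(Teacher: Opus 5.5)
Your template for the strict type does not match \Cref{def:contact_strict_approx_and_deform}, and this is a genuine gap. The definition requires a condition for \emph{every} $h$ with $n-\frac12\rk\cK_\eta<h\le n$, on each piece of a sub-partition of the rank partition. Namely, a rescaling of $\alpha_t\wedge\d\alpha_t^h$ must converge to some $\beta\wedge\d\beta^{n-i}\wedge\mu_{h,i,j}$, and $\beta\wedge\d\beta^{n-i}\wedge\mu_{h,i,j}\wedge\widetilde\omega^{n-h}$ must be uniformly positive. You only examine $h=n$. In place of the intermediate powers, you require the rescaled $\d\alpha_t\vert_{\xi_t}$ to converge into the closed cone generated by $\mu\vert_{\cK_\eta}$ and $\CpS_\eta$. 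That condition is not part of the definition; it is closer in spirit to Toussaint's notion, which the paper deliberately avoids. It also fails in item 5. There $\d\beta_{k,s}=s\,\d\alpha_k$, so the non-zero rescaled limit is $\d\alpha_k\vert_{\ker\d x}$, which has $\partial_y$ in its kernel. On the other hand $\CpS_\eta=0$, and any admissible $\mu$ is non-degenerate on $\cK_\eta=\eta$.

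The intermediate powers carry the real content whenever $\rk\cK_\eta\ge4$, i.e.\ in the spinal case of item 2 and in items 3, 4 and 5. For Mori's family the whole verification concerns $h=1$. One has $\alpha_s\wedge\d\alpha_s=s\,\alpha_0\wedge\zeta+O(s^2)$ on $\{\rho<1/2\}$ but $s^2\alpha_0\wedge\zeta'+O(s^3)$ on $\{\rho\ge1/2\}$; this change of order is exactly why sub-partitions are needed. One must then check $\alpha_0\wedge\zeta\wedge\tau$ and $\alpha_0\wedge\zeta'\wedge\tau$ against Mori's sign conditions. For Massot--Niederkr\"uger--Wendl one needs $s^{-h}\beta_{k,s}\wedge\d\beta_{k,s}^h\to\d x\wedge\mu_h$ together with $\d x\wedge\mu_h\wedge\omega^{n+1-h}>0$ for all $h$. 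This is what dictates the explicit choice $\omega=e^{\sum_i t_i}\d\theta_0\wedge\d y-\sum_i e^{-t_i}\d t_i\wedge\d\theta_i$: its $\d\theta_0\wedge\d y$ term supplies the direction missing from $\mu_h$. Saying that the restriction of the filling form ``dominates'' does not replace this check, so these verifications are not routine.

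Item 1 is also set up incorrectly. Here $\widehat B=\pi^{-1}(B)$, and $B$ is a codimension-$2$ \emph{contact} submanifold. Along $\widehat B$, $\d\pi$ kills the normal plane, so $\widehat\alpha=\pi^*\alpha$ and $\d\widehat\alpha=\pi^*\d\alpha$ both vanish on it, while $\d\widehat\alpha$ is non-degenerate on $T\widehat B\cap\widehat\xi$. Hence $\cK_{\widehat\xi}$ is the rank-$2$ normal plane; $T\widehat B$ is odd-dimensional and could not be $\cK_{\widehat\xi}$. The form you propose vanishes on this plane, so it does not even give a c-symplectic confoliation. The right choice is the fibrewise area form $\rho\,\d\rho\wedge\gamma$ on the normal disks. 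It is exactly what appears at first order: for $\alpha_s=\widehat\alpha+s\epsilon\chi(\rho)\rho^2\gamma$, the leading term of $\alpha_s\wedge\d\alpha_s^n-\widehat\alpha\wedge\d\widehat\alpha^n$ is $2ns\epsilon\,\widehat\alpha\wedge\d\widehat\alpha^{n-1}\wedge\rho\,\d\rho\wedge\gamma$ plus terms of order $s\rho^2$.

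For the spinal part of item 2 you never construct the confoliation. In the paper it is the $s=0$ member of an explicit family $\alpha'_s$ defined on spine, interface and paper. The rank of $\cK_{\eta'}$ is $4$ on the spine, $2n-2$ on the paper, and $0$ in the middle of the interface, which is a genuine contact region absent from your description. It comes with a $\mu'$ interpolated by cut-offs, and the key check is positivity of $\alpha'_s\wedge(\mu'+t\,\d\alpha'_s)^{n+1}$ on the interface. The tame $J_s$ are given by separate explicit formulas adapted to the splittings of $\xi'_s$ on the three pieces. Your Cauchy--Schwarz scheme relies on a continuous splitting $\cK_\eta\oplus\cK_\eta^{\perp}$ and on lower bounds that degenerate where $\rk\cK_\eta$ jumps, which is precisely the situation at that interface.

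For item 4, by contrast, there is no obstacle. Since $\d\alpha_s=\eta\wedge\alpha_s+s\,\d_\eta\lambda$ (with $\eta$ the Lee form, as in that subsection), one has $\d\alpha_s\vert_{\xi_s}=s\,\d_\eta\lambda\vert_{\xi_s}$ exactly. Any smooth extension of a $\d_\eta\lambda$-tamed $J$ therefore works by openness, and the same identity gives all powers $\alpha_s\wedge\d\alpha_s^h=s^h\alpha_s\wedge(\d_\eta\lambda)^h$. Your split Bourgeois case is in line with the paper. Finally, strictness of the limit confoliations, which is part of the statement, is not addressed.
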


For the newly introduced notions of contact strict and tame approximations we prove the following results:
\begin{thm}
\label{thm:weak_fillability_stable_under_tame_contact_approx}
    Let $M$ be a closed $(2n+1)$-dimensional manifold. 
    On it, consider a tame c-symplectic confoliation $(\eta,\CS_{\eta,\mu})$, as well as a sequence $(\xi_m)_{m\in \bN}$ of contact structures on $M$ that $C^1$-approximates the former in the tame sense. 
    Then, any weak symplectic filling $(W,\Omega)$ of $(M,\eta,\CS_{\eta,\mu})$ is, after attaching a long cylindrical end, also a weak symplectic filling of $(M,\xi_m)$, for any $m$ large enough.
\end{thm}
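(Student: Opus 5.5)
We have to guess the definition of contact tame approximation: a sequence of contact forms $\alpha_m$ with $\xi_m=\ker\alpha_m$ converging in $C^1$ to a defining form $\alpha$ of $\eta$, together with complex structures $J_m$ on $\xi_m$ tamed by $\d\alpha_m\vert_{\xi_m}$ and converging to a complex structure $J$ on $\eta$ that is tamed by $\CS_{\eta,\mu}$ (tamed by $\mu$, and by $\CpS_\eta$ in the degenerate sense recalled above). The plan is to check the weak filling condition $\Omega\vert_{\xi_m}+t\,\d\alpha_m\vert_{\xi_m}$ non-degenerate for all $t\geq0$ on a collar. A sufficient condition is that this cone is tamed by a single complex structure, namely $J_m$. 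Since tamed forms make up a convex cone, it is enough to show that both $\d\alpha_m\vert_{\xi_m}$ (true by assumption) and $\Omega\vert_{\xi_m}$ are tamed by $J_m$ for $m$ large.

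The second condition is where the cylindrical end enters. A weak filling of the confoliation only gives that $\Omega\vert_\eta$ is compatible with $\CS_{\eta,\mu}$; it does not give that $\Omega\vert_\eta$ is tamed by $J$. I would therefore first modify $\Omega$ near the boundary. Attach a collar $[0,T]\times M$ with symplectic form $\Omega_T=\Omega_M+\d(s\,\alpha)$-type, where $\Omega_M$ is the pullback of $\Omega\vert_{TM}$; closedness holds since $\Omega\vert_M$ is closed. Nondegeneracy follows from the compatibility of $\Omega\vert_\eta$ with $\CpS_\eta$, exactly as in the contact setting of \cite{MassotNiederkrugerWendl_WeakFillings}. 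At the new boundary $\{T\}\times M$ the restriction to $\eta$ is then $\Omega\vert_\eta+T\,\d\alpha\vert_\eta$.

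Next I claim that, for $T$ large, $\Omega\vert_\eta+T\d\alpha\vert_\eta$ is tamed by $J$. On $\cK_\eta$ the form equals $f\mu$, which $J$ tames. For the transverse directions $\d\alpha(v,Jv)>0$ on $v\notin\cK_\eta$, and the $T\d\alpha$ term dominates. The cross terms need care because $\cK_\eta$ may not be $J$-invariant and its rank may jump. I would handle this by a compactness argument on the unit sphere bundle of $\eta$. At points where $\d\alpha(v,Jv)=0$ we have $v\in\cK_\eta$, so $\Omega(v,Jv)=f\mu(v,Jv)>0$; by openness and compactness a uniform $T$ works. This uses tameness of $J$ by $\mu$ together with $\Omega\vert_{\cK_\eta}=f\mu\vert_{\cK_\eta}$.

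Finally I pass to $\xi_m$. For fixed $T$, the form $\Omega_T\vert_{\xi_m}$ is $C^0$-close to $\Omega_T\vert_\eta$, since the $C^1$ convergence $\alpha_m\to\alpha$ makes the plane fields $C^0$-close, and $J_m$ is close to $J$. Taming is an open condition on the unit sphere bundle, so $\Omega_T\vert_{\xi_m}$ is $J_m$-tamed for $m\gg0$. The full cone is then $J_m$-tamed, hence non-degenerate, and $(W\cup[0,T]\times M,\Omega_T)$ is a weak filling of $(M,\xi_m)$.

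The main obstacle I expect is the uniform taming step near points where $\rk\cK_\eta$ jumps. There the limiting quadratic form $\d\alpha(v,Jv)$ degenerates non-uniformly, and I rely on the compactness/openness argument at $v\in\cK_\eta$ rather than on any splitting of $\eta$.
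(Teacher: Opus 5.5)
Your proposal is correct and is essentially the paper's argument: a cylindrical end replaces $\Omega|_M$ by $\Omega|_M+T\,\d\alpha$ so that $J$ tames its restriction to $\eta$, and then openness of taming plus compactness of the unit sphere bundle makes $J_m$ tame it on $\xi_m$ for $m$ large; the only difference is that you prove directly, by compactness and by convexity of the tamed cone, what the paper takes from \cite[Lemma 2.46]{CardonaGironella_ConfoliationsHighDim} and \cite[Theorem 2.4]{MassotNiederkrugerWendl_WeakFillings}. One correction: $\cK_\eta$ is automatically $J$-invariant, since for $v\in\cK_\eta$ one has $\d\alpha(Jv,J(Jv))=\d\alpha(v,Jv)=0$, and the equality case of the $\CpS_\eta$-taming condition then gives $Jv\in\cK_\eta$. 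This invariance is exactly what your step $\Omega(v,Jv)=f\mu(v,Jv)$ needs, so drop your remark that $\cK_\eta$ ``may not be $J$-invariant''.
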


\begin{thm}
\label{thm:weak_fillability_stable_under_strict_contact_approx}
    Let $M$ be a closed $(2n+1)$-dimensional manifold. 
    On it, consider a c-symplectic confoliation $(\eta,\CS_{\eta,\mu})$, as well as a sequence $(\xi_m)_{m\in \bN}$ of contact structures on $M$ that $C^1$-approximates the former in the strict sense. 
    Then, any quasi-strong symplectic filling $(W,\Omega)$ of $(M,\eta,\CS_{\eta,\mu})$ is also a weak symplectic filling of $(M,\xi_m)$, for any $m$ large enough.

    \noindent
    Moreover, the same holds for any \emph{strict} weak symplectic filling $(W,\Omega)$ of $(\eta,\CS_{\eta,\mu})$, after attaching a long cylindrical end.
\end{thm}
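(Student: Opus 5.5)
The strategy is to reduce weak fillability of $(M,\xi_m)$ to a pointwise algebraic positivity condition on $M$, and then to check that condition using the strict approximation hypothesis. Recall, following \cite{MassotNiederkrugerWendl_WeakFillings}, that $(W,\Omega)$ is a weak filling of $(M,\xi_m)$ precisely when, for one (hence any) contact form $\alpha_m$ defining $\xi_m$,
\[
\alpha_m\wedge(\Omega+t\,\d\alpha_m)^n>0 \quad \text{on } M, \text{ for all } t\geq 0 .
\]
Since $M$ is closed, it suffices to prove this for all $m$ large, uniformly in $t\in[0,+\infty)$. I would expand the left-hand side binomially as
\[
\sum_{j=0}^n \binom{n}{j}\, t^j\, \alpha_m\wedge\Omega^{n-j}\wedge \d\alpha_m^j
\]
and control the individual coefficients. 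The leading coefficient $\alpha_m\wedge\d\alpha_m^n$ is positive because $\xi_m$ is contact. The constant term $\alpha_m\wedge\Omega^n$ converges in $C^0$ to $\beta\wedge\Omega^n$, since $\alpha_m\to\beta$ in $C^0$ by the $C^1$-approximation. This limit is positive because $\Omega\vert_\eta$ is non-degenerate with the correct orientation.

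For the quasi-strong case we have $\Omega\vert_\eta=\mu+\d\beta\vert_\eta$. Substituting this, every intermediate coefficient becomes a combination of the wedge products
\[
\alpha_m\wedge\mu^{a}\wedge\d\beta^{b}\wedge\d\alpha_m^{c} ,
\]
up to error terms involving $\alpha_m-\beta$ and $\d\alpha_m-\d\beta$, which tend to zero in $C^0$. This is exactly where I expect the strict-type definition to be used. As I understand it, that definition prescribes the sign (non-negativity) of these mixed wedge products of $\alpha_m$, $\d\alpha_m$, $\d\beta$ and $\mu$ (restricted appropriately to $\cK_\eta$), together with a dominance of the leading terms as $m\to\infty$.

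The main obstacle is the region where $\cK_\eta$ is large, i.e.\ where $\d\beta\vert_\eta$ is very degenerate. There the limiting coefficients $\beta\wedge(\mu+\d\beta)^{n-j}\wedge\d\beta^j$ vanish for large $j$, so positivity for intermediate values of $t$ cannot come from the $C^0$ limits alone. To handle it I would split $\eta=\cK_\eta\oplus\cK_\eta^{\perp}$ pointwise. Symplectic compatibility of $\mu$ with $\CpS_\eta$ makes the $\cK_\eta^\perp$-block positive for all $t$. On the $\cK_\eta$-block, the strict conditions on $\alpha_m\wedge\mu\vert_{\cK_\eta}^{a}\wedge\d\alpha_m^{c}$ should give that $\mu+t\,\d\alpha_m$ stays non-degenerate there. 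Combining the two blocks would give a lower bound of the form $c\,(1+t)^n$ times a volume form, with $c>0$ independent of $t$ for all $m$ large. Uniformity in $t$ follows by comparing the two ends: small $t$ is controlled by the constant term, large $t$ by the contact term. A compactness argument on $M\times[0,+\infty]$, using the rescaled parameter $t/(1+t)$, then yields a single threshold $m_0$.

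For the \emph{strict} weak filling case, where $\Omega\vert_\eta$ is only compatible with the cone $\CS_{\eta,\mu}$, I would first attach a collar $[0,T]\times M$ carrying the form $\Omega+\d(s\beta)$, with $s$ the collar coordinate. This is symplectic because $\Omega\vert_\eta+s\,\d\beta\vert_\eta$ remains non-degenerate by compatibility, and because $\beta\wedge(\Omega+s\,\d\beta)^n>0$. On the new boundary $\{T\}\times M$ the restriction to $\eta$ is $\Omega\vert_\eta+T\,\d\beta\vert_\eta$, and for $T$ large the $\d\beta$-term dominates on $\cK_\eta^\perp$. The strictness hypothesis is what lets one treat $\Omega\vert_{\cK_\eta}=f\mu\vert_{\cK_\eta}$ as playing the role of $\mu$. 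After this, the boundary behaves as in the quasi-strong case, and the previous argument applies verbatim.
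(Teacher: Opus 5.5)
Your reduction to $\alpha_m\wedge(\Omega+t\,\d\alpha_m)^n>0$ for all $t\ge0$ and the binomial expansion match the paper. However, the decisive step is only asserted, and the framework around it cannot work.

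When $\eta$ is a foliation, $\cK_\eta=\eta$ and your block splitting is trivial. Your argument then reduces to the sentence that the strict conditions `should give' non-degeneracy of $\mu+t\,\d\alpha_m$, and that sentence is the whole theorem.

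Moreover, at a point of $C_i=\{\rk\cK_\eta=2i\}$, every coefficient $\alpha_m\wedge\d\alpha_m^h\wedge\Omega^{n-h}$ with $h>n-i$ tends to $\beta\wedge\d\beta^h\wedge\Omega^{n-h}=0$, the top one included. This has three consequences:
\begin{itemize}
\item there is no bound $c(1+t)^n\Vol$ with $c$ independent of $m$;
\item $(1+t)^{-n}\alpha_m\wedge(\Omega+t\,\d\alpha_m)^n$ has no positive limit on $M\times[0,+\infty]$;
\item compactness in $t/(1+t)$ cannot produce a single threshold $m_0$, because the danger sits at values of $t$ growing with $m$, which neither end controls.
\end{itemize}
\Cref{exa:old_notion_approx_not_enough_for_weak_fill} is exactly this situation. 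There the constant term is $6$, the top term is positive by contactness, and $P_m(t)\to 6$ for each fixed $t$, yet $P_m<0$ on $(m^2/2,m^2)$.

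For the same reason, error terms that are merely $o(1)$ in $C^0$ are useless for the high-order coefficients, whose main terms are themselves $o(1)$. The splitting $\eta=\cK_\eta\oplus\cK_\eta^\perp$ does not help either. Neither $\Omega$ nor $\d\alpha_m\vert_{\xi_m}$ is block-diagonal, the off-diagonal part of $t\,\d\alpha_m$ is small but multiplied by the unbounded $t$, and the splitting jumps with the rank.

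The missing idea is to prove positivity \emph{coefficient by coefficient}, using the actual content of \Cref{def:contact_strict_approx_and_deform}. That content is conformal convergence, not non-negativity of mixed products. At a point of $C_i$ the paper splits the sum at $h=n-i$.
\begin{itemize}
\item The low-order part is a polynomial of degree $n-i$. By $C^1$-convergence it converges to $\beta\wedge(\Omega+t\,\d\beta)^n$, which is positive for all $t\ge0$; this is where the quasi-strong hypothesis enters. On $C_i$ this limit has positive leading coefficient, a multiple of $\beta\wedge\d\beta^{n-i}\wedge\Omega^i$, so the low-order part stays positive for all $t$ once $m$ is large.
\item For $h>n-i$, item 2(a) provides positive functions $f_{m,h,i,j}$ with $f_{m,h,i,j}\,\alpha_m\wedge\d\alpha_m^h\to\beta\wedge\d\beta^{n-i}\wedge\mu_{h,i,j}$. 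Item 2(b), applicable to $\Omega_M$ by \Cref{rmk:contact_approx_and_deform_only_depends_on_omega_on_Keta}, bounds this limit wedged with $\Omega^{n-h}$ below by $\delta\Vol$. Hence each such coefficient is individually positive for $m$ large, however fast $f_{m,h,i,j}$ blows up.
\end{itemize}
Uniformity over $M$ then comes from a reverse induction on $i$, using upper semi-continuity of the rank so that $C_i\setminus V_{i+1}$ is compact.

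In the strict weak filling case you also misplace the role of strictness. It is not what lets $f\mu\vert_{\cK_\eta}$ replace $\mu$; that is the remark just cited. The shifted boundary also does not literally become quasi-strong. Instead, after attaching the cylindrical end, strictness is used via \Cref{lem:strict_weak_filling_c-symplectic_confoliation_positive_coefficients_polynomial} to secure positivity of the limiting low-order polynomial $\beta\wedge(\Omega\vert_\eta+t\,\d\beta)^n$.
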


The above statements are honest generalizations of the $3$-dimensional case, in the sense that they recover completely \cite[Corollary 3.2.5]{EliashbergThurston_Confoliations}.
While their proof is not technically hard, the difficulty behind these generalization lied mainly in pinning down the correct notions of contact approximation for c-symplectic confoliations that allowed to recover such a result.

\medskip

Lastly, the following obstruction to contact approximations then follows directly from \Cref{thm:rank_char_fol_at_least_two,thm:weak_fillability_stable_under_tame_contact_approx,thm:weak_fillability_stable_under_strict_contact_approx} above, using the fact that a strong symplectic foliation always admits a (smoothly trivial) strong symplectic semi-filling.
(After the $3$-dimensional \cite[Proposition 3.2.2]{EliashbergThurston_Confoliations}, this last mentioned fact is folklore, and proved in exactly the same way, in all ambient odd dimensions. This was already proven explicitly in \cite[Lemma 2.7.6]{Toussaint_PhDThesis}, and we include in this paper as well a short proof as \Cref{lem:strong_sympl_fol_are_quasi-strong_sympl_semi-fillable} for completeness.)

\begin{cor}
    \label{cor:no_contact_approximations_product_foliation}
    Let $(X,\omega_X)$ be a closed, symplectically-aspherical symplectic manifold.
    Consider $M=\bS^1_\theta\times \bS^2\times X$ equipped with the strong symplectic foliation $(\cF = \ker(\d\theta), \omega = \omega_{\bS^2}+\omega_X) $, where $\omega_{\bS^2}$ is any area form on $\bS^2$.
    Then, $(\cF,\omega)$ cannot be $C^1$-approximated by contact structures, in either the tame or the strict sense.
\end{cor}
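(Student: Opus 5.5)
The plan is to argue by contradiction. Suppose $(\cF,\omega)$ on $M=\bS^1_\theta\times\bS^2\times X$ admits a sequence $(\xi_m)_{m\in\bN}$ of contact structures that $C^1$-approximates it, in either the tame or the strict sense. We will produce a weak symplectic semi-filling of some $(M,\xi_m)$ and feed it into \Cref{thm:rank_char_fol_at_least_two}. That theorem will force the characteristic foliation of $\xi_m$, viewed as a c-symplectic confoliation, to have rank at least $2$. But a contact structure has trivial characteristic foliation, which gives the contradiction.

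First, by \Cref{lem:strong_sympl_fol_are_quasi-strong_sympl_semi-fillable}, the strong symplectic foliation $(\cF,\omega)$ has a quasi-strong (even strong) symplectic semi-filling. Concretely, we take $W=[0,1]\times M$ with $\Omega=\omega+\d(t\,\d\theta)$, suitably modified so that both boundary components are convex. This filling is also weak, and it is strict in the relevant sense, since $\mu=\omega$ is closed.

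Next, we apply \Cref{thm:weak_fillability_stable_under_tame_contact_approx} in the tame case, using that $\cF$ with $\omega$ is a tame c-symplectic confoliation: any $J$ on $\cF$ tamed by $\omega$ works, because $\CpS_\cF=0$ and $\cK_\cF=\cF$. In the strict case we apply \Cref{thm:weak_fillability_stable_under_strict_contact_approx} instead. Either way, after attaching collars, $W$ becomes a weak symplectic semi-filling of $(M,\xi_m)$ for $m$ large. We must check that the theorems, stated for fillings, apply to semi-fillings componentwise; their proofs are local near the boundary, so this should be immediate.

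We then view $(\xi_m,\CS_{\xi_m,\Omega|_{\xi_m}})$ as a c-symplectic confoliation. It is geodesible via the Reeb field, and tame via a $J$ compatible with $\d\alpha_m$. Its characteristic foliation is zero, so the compatibility condition on $\cK$ is vacuous. The remaining hypothesis of \Cref{thm:rank_char_fol_at_least_two} requires a leaf $L\cong\bS^2\times X$ tangent to $\eta$ with rank $2k$ near $L$, and this fails for $\xi_m$.

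So the main obstacle is to organise the contradiction correctly. I would instead run the argument of \Cref{thm:rank_char_fol_at_least_two} on a single object interpolating between the two structures. Take the semi-filling $W$, keep on one boundary component the foliation $(\cF,\omega)$ itself, which contains the leaf $L=\{\theta_0\}\times\bS^2\times X$, and on the other boundary component replace the collar by the one adapted to $\xi_m$. This yields a weak semi-filling whose boundary is $(M,\cF)\sqcup(M,\xi_m)$.

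The hypotheses on $L$ hold:
\begin{itemize}
\item the normal bundle is trivial;
\item $\cK_\cF=\cF$ has rank $2n$ with trivial holonomy;
\item $\mu|_L$ splits;
\item $\omega_X$ is aspherical;
\item the class $[\bS^2]$ is primitive in $W\simeq M$.
\end{itemize}
Semi-positivity also holds. Indeed, $\pi_2(W)$ is generated by $[\bS^2]$ together with the aspherical contribution of $X$, and $c_1([\bS^2])=2$, so no spherical class has $\Omega>0$ and $3-n\le c_1<0$.

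The sphere-bubbling argument only uses connectivity of the filling, since its moduli space sweeps through $W$. It therefore shows that every point of the $\xi_m$ boundary component lies in a region where the characteristic foliation has rank at least $2$. This is the contradiction. The genuine difficulty to verify is that the proof of \Cref{thm:rank_char_fol_at_least_two} indeed propagates across boundary components, i.e.\ that the paths of evaluation points may start at $L$ in one component and end on another. If that fails, I would fall back on Hypothesis \hypref{hyp:cobordism_from_hypertight_shs}, treating the $\cF$ end as a stable Hamiltonian structure without contractible Reeb orbits, given by $\partial_\theta$ with $\d\theta$ and $\omega$.
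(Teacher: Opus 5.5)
Your corrected argument is the paper's proof. You take the product semi-filling $W=[1,2]\times M$ from \Cref{lem:strong_sympl_fol_are_quasi-strong_sympl_semi-fillable}, use \Cref{thm:weak_fillability_stable_under_tame_contact_approx,thm:weak_fillability_stable_under_strict_contact_approx} to make it a weak semi-filling of $(\overline{M},\overline{\cF})\sqcup(M,\xi_m)$, and then run \Cref{thm:rank_char_fol_at_least_two} with the leaf $L$ in the foliated end to exclude contact points on the other end. The paper likewise silently assumes that the path argument crosses between the two components of the connected $W$; it also never checks semi-positivity, which you do, though your check implicitly reads ``symplectically aspherical'' as also forcing $c_1$ to vanish on $\pi_2(X)$. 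Your opening attempt to apply the theorem to $(M,\xi_m)$ alone can be dropped, and so can the (CobordSHS) fallback, which would not apply anyway because the contact end is a convex boundary rather than a concave stable Hamiltonian one.
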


\begin{rmk}
    \label{rmk:C1_regularity_for_contact_approximations}
    In the above statement, and more generally in all the discussions about strict and tame approximations by contact structures in this paper, we stick to the case where the underlying smooth hyperplane fields converge in norm at least $C^1$.
    While the $C^0$ case can be treated analogously to \cite[Item 2 in Definition 7.5]{CardonaGironella_ConfoliationsHighDim} in order to obtain stability of weak fillability as in \Cref{thm:weak_fillability_stable_under_strict_contact_approx,thm:weak_fillability_stable_under_tame_contact_approx} and a resulting obstruction to $C^0$-approximations in \Cref{cor:no_contact_approximations_product_foliation}, we stick to norm $C^k$ for $k\geq 1$ as there is currently no natural class of examples of contact $C^0$-approximations (that are not $C^1$-approximations) of confoliations in dimension at least $5$.
\end{rmk}

\begin{rmk}
    \label{rmk:no_contact_approx_prod_fol_holds_possibly_more_generally}
    As it will be clear from its proof, \Cref{cor:no_contact_approximations_product_foliation} holds more generally for any notion of contact approximation such that an analogue of \Cref{thm:weak_fillability_stable_under_tame_contact_approx,thm:weak_fillability_stable_under_strict_contact_approx} holds, i.e.\ such that weak fillability of the c-symplectic confoliation is inherited by the approximating contact structures.
\end{rmk}

\paragraph*{Outline.}
\Cref{sec:preliminaries} starts with recalls on basic properties of c-symplectic confoliations from \cite{CardonaGironella_ConfoliationsHighDim}, and continues with the proof of \Cref{prop:fully_foliated_part_closed} about the fully foliated subset, then introduces the notion of ``strict'' c-symplectic confoliation, and lastly describes the natural symplectic weak semi-filling for strong symplectic foliations.
\Cref{sec:single_sphere_family} introduces the moduli space of pseudo-holomorphic spheres, together with its properties, necessary to the proof of \Cref{thm:rank_char_fol_at_least_two}, which is detailed in 
\Cref{sec:proof_rank_char_fol_at_least_two} afterwards.
Then, \Cref{sec:pseudo-hol_spheres_semi-filling} and \Cref{sec:proof} contain the description of the moduli spaces of pseudo-holomorphic spheres necessary for \Cref{thm:rank_char_fol_at_least_four} as well as the proof of the latter (including under the alternative assumption \Cref{item:splitting_mu_bis__thm_rank_at_least_four}).
Lastly, \Cref{sec:contact_approximations_weak_fillability} contains the new notions of strict and tame contact approximations, as well as the proofs of \Cref{thm:weak_fillability_stable_under_strict_contact_approx,thm:weak_fillability_stable_under_tame_contact_approx}, \Cref{prop:examples_contact_approx_and_deform} and \Cref{cor:no_contact_approximations_product_foliation}.

\paragraph*{Acknowledgements.}
The authors are grateful to Robert Cardona for letting them include the definition of contact tame approximation, originated from discussions in the context of \cite{CardonaGironella_ConfoliationsHighDim}, in the present paper.
During the preparation of this work, the authors benefited from the support of: the region Pays de la Loire, via
the project Etoile Montante 2023 SymFol; the French government’s ``Investissements
d’Avenir'' program integrated to France 2030, bearing the following reference ANR-11-LABX0020-01; the ANR Grant COSY ``New challenges in symplectic and contact topology'', bearing the reference ANR-21-CE40-0002; the ANR Grant SiSyFo ``Singular Symplectic Foliations in Contact, Symplectic and Poisson topology'', bearing the reference ANR-25-ERCS-0010; and the Basic Science Research Institute Fund, whose NRF grant number is RS-2021-NR060139.

\paragraph*{AI disclosure.}
As asked in the \href{https://info.arxiv.org/help/moderation/index.html#policy-for-authors-use-of-generative-ai-language-tools}{arXiv guidelines}, we explicitly discuss AI use in the preparation of this article. 
No AI software was used for the mathematical part of the paper, but a minor use has 
been done for a final spell-checking and to generate ad-hoc latex commands to
solve a compilation issue (involving the package cleveref) with the arXiv pdf compiler.


\section{Preliminaries}
\label{sec:preliminaries}


\subsection{High-dimensional c-symplectic confoliations}
\label{sec:high_dim_confoliations}

We have already recalled many of the important notions in \Cref{sec:intro}.
We continue here by recalling some additional details about the content of \cite{CardonaGironella_ConfoliationsHighDim} that will be needed in later sections.

\medskip

The first is a normal form near a regular leaf of $\cK_\eta$ without holonomy:
\begin{lem}
    \label{lem:trivial_holonomy}
    Let $L\subset M$ be a closed submanifold of dimension $2k$, with $TL\subset \cK_\eta$ and assume that $\cK_\eta$ is of constant rank $2k$ near $L$ and without holonomy.
    Then there is a normal neighborhood $U_0=(-\delta,\delta)_s\times B_\eta^{2(n-k)}\times L$ of $L$ in $M$ such that $\eta\vert_{U_0} = \ker(\d s + \lambda_\eta)$, where $\lambda_\eta$ is the standard Liouville form on the ball $B_\eta^{2(n-k)}$.
\end{lem}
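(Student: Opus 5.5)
Work in a product neighborhood of $L$ and straighten things in three stages. First, the characteristic foliation $\cK_\eta$ is involutive of constant rank $2k$ near $L$, and $L$ is a closed leaf without holonomy. Local Reeb stability (\cite[Theorem 2.4.1]{CandelConlon_FoliationsIBook}) then gives a neighborhood $U\cong D^{2(n-k)+1}\times L$ whose slices $\{p\}\times L$ are exactly the leaves of $\cK_\eta$. This uses that the normal bundle is trivial; in fact triviality of holonomy already makes the normal bundle flat, and we use the given triviality directly.

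Pick a defining form $\beta$ for $\eta$ on $U$. Since $\cK_\eta\subset\eta$, the flows along the leaves preserve $\eta$. Indeed, for $X$ tangent to $\cK_\eta$ we have
\[
\mathcal L_X\beta=\iota_X\d\beta+\d(\iota_X\beta)=\iota_X\d\beta,
\]
and this vanishes on $\eta$ by the definition of $\cK_\eta=\ker(\d\beta\vert_\eta)$. Hence $\mathcal L_X\beta$ is a multiple of $\beta$, and $\eta$ is invariant under all leafwise flows. It follows that $\eta$ is the pullback, under the projection $\pi\colon U\to D:=D^{2(n-k)+1}$, of a hyperplane field $\bar\eta$ on the transversal $D$.

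To see this, write $\eta_{(p,x)}=T_xL\oplus H_{(p,x)}$ with $H$ read in $T_pD$, and check that $H$ is independent of $x$. This uses connectedness of $L$ (we may work componentwise) together with invariance under flows of vector fields tangent to the leaves, which act transitively on each leaf. We then obtain $\beta=g\,\pi^*\bar\beta$ for some positive function $g$.

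On $D$, the field $\bar\eta=\ker\bar\beta$ has trivial characteristic distribution. This is because the $2k$ dimensions of $\cK_\eta$ are exactly the $TL$ directions, so $\d\bar\beta\vert_{\bar\eta}$ is non-degenerate. Therefore $\bar\eta$ is a contact structure on the $(2(n-k)+1)$-dimensional disk. Darboux's theorem, applied at the image point of $L$, gives coordinates in which $\bar\eta=\ker(\d s+\lambda_\eta)$ on $(-\delta,\delta)\times B^{2(n-k)}$, after shrinking. Pulling back to the product gives the claimed form on $U_0$.

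The main obstacle is the middle step: showing that $\eta$ genuinely splits as a product, i.e.\ that $H$ does not depend on the point of the leaf. The leafwise invariance argument handles it, but one must check that the identification of $U$ with a product coming from Reeb stability is by leaf-preserving charts. This holds because trivial holonomy gives a product foliation chart $D\times L$ whose plaques are whole leaves.
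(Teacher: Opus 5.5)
Your proof is correct and follows essentially the same route as the paper, which just cites local Reeb stability for $\cK_\eta$ together with \cite[Proposition 2.8]{CardonaGironella_ConfoliationsHighDim} (in regions where $\cK_\eta$ has constant rank, $\eta$ is locally the pullback of a contact structure along its leaves). You reprove that fact directly via $\mathcal L_X\beta=\iota_X\d\beta$, globalize it along the compact connected leaves, and finish with Darboux; the only slip is that trivial normal bundle is not a hypothesis of this lemma, but, as you observe, it is not needed, since trivial holonomy already makes the Reeb stability neighborhood a product $D\times L$.
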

The proof, here omitted, is an easy combination of local Reeb stability for $\cK_\eta$ (see e.g.\ \cite[Theorem 2.4.1]{CandelConlon_FoliationsIBook}) and \cite[Proposition 2.8]{CardonaGironella_ConfoliationsHighDim}.

\medskip

The second is a property of the rank of $\cK_\eta$:
\begin{lem}[{\cite[Lemma 2.10]{CardonaGironella_ConfoliationsHighDim}}]
    \label{lem:rank_function_upper_semi-continuous}
    The rank $\rk\cK_\eta$ of $\cK_\eta$ is an upper semi-continuous integer-valued function on $M$.
\end{lem}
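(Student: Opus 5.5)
Upper semi-continuity of $\rk\cK_\eta$ means that every point $p$ has a neighborhood on which $\rk\cK_\eta\leq\rk\cK_\eta(p)$. The plan is to reduce this to the standard fact that the rank of a continuously varying matrix is lower semi-continuous, which in turn comes from non-vanishing of minors being an open condition.

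Fix a defining $1$-form $\beta$ for $\eta$. Near any point $p\in M$, choose a local frame $e_1,\dots,e_{2n}$ of $\eta$. Such a frame exists because $\eta$ is a smooth corank-$1$ subbundle of $TM$, so it is locally trivial. With respect to this frame, $\d\beta\vert_\eta$ is represented by the skew-symmetric matrix $A(q)=\big(\d\beta_q(e_i,e_j)\big)_{i,j}$, and the entries of $A$ depend smoothly on $q$. By definition $\cK_\eta(q)=\ker(\d\beta_q\vert_{\eta_q})$, so
\[
\rk\cK_\eta(q)=2n-\rk A(q).
\]
We should also check that this quantity does not depend on the choice of $\beta$. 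If $\beta'=g\beta$ with $g$ nowhere zero, then $\d\beta'\vert_\eta=g\,\d\beta\vert_\eta$, because the term $\d g\wedge\beta$ vanishes on $\eta$. Hence the kernel is the same. It also does not depend on the choice of frame, since changing frame conjugates $A$ by an invertible matrix and leaves its rank unchanged.

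Now let $r=\rk A(p)$. Some $r\times r$ minor of $A$ is nonzero at $p$. That minor is a continuous function of $q$, so it stays nonzero on a neighborhood $V$ of $p$. Therefore $\rk A(q)\geq r$ for all $q\in V$, which gives $\rk\cK_\eta(q)\leq 2n-r=\rk\cK_\eta(p)$ on $V$. This is exactly upper semi-continuity at $p$, and since $p$ was arbitrary the lemma follows.

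There is no real obstacle in this argument. The only points that need care are the well-definedness checks above: that the rank of $\cK_\eta$ is independent of the choice of $\beta$ and of the local frame.
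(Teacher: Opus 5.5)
Your argument is correct and is the standard one: $\rk\cK_\eta=2n-\rk(\d\beta\vert_\eta)$, and the rank of a continuously varying bilinear form is lower semi-continuous because non-vanishing of a minor is an open condition (the paper gives no proof of its own here and simply cites \cite[Lemma 2.10]{CardonaGironella_ConfoliationsHighDim}). An equivalent, frame-free way to phrase the same step is that $\rk\cK_\eta(q)\leq 2n-2k$ exactly when $(\beta\wedge\d\beta^k)_q\neq 0$, which is visibly an open condition in $q$.
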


\noindent
In other words, if $p_m\rightarrow p $ in $M$ as $m\to \infty$, then $(\rk\cK_\eta)_{p_m}\leq (\rk\cK_\eta)_p $ for $m$ big enough.

For further use, we now describe a special partition of $M$, to which we will refer as the ``rank partition'' and that was already described in \cite[Section 7]{CardonaGironella_ConfoliationsHighDim}.

Note that, in the setting of c-symplectic confoliations $(\eta,\CS_{\eta,\mu})$, where $\mu\vert_{\cK_\eta}$ is non-degenerate on $\cK_\eta$ by definition, we moreover have that $\cK_{\eta}$ must have even rank at all points.
As this will be our setup later on, we restrict to this case here as well.

For $0\leq i \leq n$, we define $U_i \subset M$ to be the locus of points $p\in M$ such that $\rk(\cK_{\eta})(p)< 2(i+1)$; or, said otherwise, $\rk(\cK_\eta)(p)\leq 2i$ by the fact that the rank is integer-valued.
Note that, due to \Cref{lem:rank_function_upper_semi-continuous}, such $U_i$'s are open subsets of $M$.
We also let $U_{-1}=\emptyset$ for notational convenience.
Note that $U_{n}=M$ and $U_{i}\supset U_{i-1}$.
We then let, for $i=0,\ldots,n$, $C_i := U_{i}\setminus U_{i-1}$; this is nothing else than the set of points where $\rk(\cK_{\eta})=2i$.
On $M$, we then consider the partition $\cP = \{C_i\mid 0\leq i \leq n\}$, which we will call \textbf{rank partition}.

\bigskip

The next property that we will need in later sections is the behavior of pseudo-holomorphic curves in the presence of a \textbf{tame weakly pluri-subharmonic function} $\psi$.
By this we mean a function $\psi\colon X \to \bR$ on an almost complex manifold $(X,J)$ such that 
\[
    -\d\d^\bC\psi(v,Jv)\geq 0 \, , 
    \text{ with equality if and only if } v \in \cK_\eta \, ,
\]
where $\eta = \ker(-\d^\bC\psi)\cap \ker (\d\psi)$, with $\d^\bC\psi:= \d\psi\circ J$.
Recall that this implies in particular that $\eta $ is $J$-invariant.
The desired weak maximum principle then states the following:

\begin{lem}[{\cite[Lemma 3.4]{CardonaGironella_ConfoliationsHighDim}}]
    \label{lem:weak_maximum_principle}
    Let $(X,J)$ be an almost complex manifold, $\psi\colon X\to\bR$ a tame weakly pluri-subharmonic function, $(\Sigma,j)$ a Riemann surface and $u\colon (\Sigma,j)\to (X,J)$ a pseudo-holomorphic map so that $\psi\circ u$ attains its maximum in an interior point of $\Sigma$.
    Then, $u$ has image contained in a level set $M$ of $\psi$.
    More precisely, for all $p\in \Sigma$, the image of $\d_p u$ is included in $\cK_{\eta}$, where $\eta:=\ker((-\d^\bC\psi)\vert_{TM})\subset TM$. 
\end{lem}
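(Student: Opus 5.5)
The plan is to show that $\psi\circ u$ is subharmonic, so that the strong maximum principle makes it constant. The vanishing of its Laplacian, combined with the tameness condition, then forces $\d u$ into $\cK_\eta$. Throughout I assume $\Sigma$ connected; otherwise one works on the component containing the interior maximum. I also assume, as implicit in the statement, that the level set $M$ is regular near the image, so that $\eta$ is a hyperplane field in $TM$.

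\emph{Step 1 (subharmonicity).} Work in a local holomorphic coordinate $z=s+it$ on $\Sigma$ and set $f:=\psi\circ u$. Since $J\circ\d u=\d u\circ j$, we get $u^*(\d^\bC\psi)=\d\psi\circ J\circ \d u=\d f\circ j$. Hence
\[
u^*(-\d\d^\bC\psi) = -\d(\d f\circ j) = \Delta f\, \d s\wedge \d t .
\]
Evaluating the left-hand side on $(\partial_s,\partial_t)$ gives
\[
\Delta f = -\d\d^\bC\psi(\partial_s u, J\partial_s u),
\]
using $\partial_t u=J\partial_s u$. The sign conventions must be checked here: the sign of $\d^\bC$ should match the orientation conventions of $(\Sigma,j)$, so that this is precisely the quadratic form appearing in the definition.

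To get $\Delta f\geq 0$ at every point, not only along $\eta$, I would use the hypothesis that $-\d\d^\bC\psi(v,Jv)\geq 0$ for all $v$. This is how tame weak pluri-subharmonicity should be read, with the equality clause pertaining to $\eta$. If the definition only controls vectors in $\eta$, this is the point needing care.

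\emph{Step 2 (constancy).} A subharmonic function on a connected Riemann surface attaining an interior maximum is constant, by the strong maximum principle (Hopf, or the mean value inequality). So $f\equiv c$, i.e.\ $u(\Sigma)\subset M:=\psi^{-1}(c)$. Moreover $\Delta f\equiv 0$, so $-\d\d^\bC\psi(\partial_s u,J\partial_s u)=0$ everywhere.

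\emph{Step 3 (tangency to $\cK_\eta$).} From $\d f=0$ we get $\d\psi(\d u)=0$. Also $\d^\bC\psi(\d u)=\d\psi(\d u\circ j)=0$, so the image of $\d_p u$ lies in $\eta=\ker\d\psi\cap\ker\d^\bC\psi$. The equality case of the tameness condition then gives $\partial_s u\in\cK_\eta$.

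It remains to see that $\cK_\eta$ is $J$-invariant, which yields $\partial_t u=J\partial_s u\in\cK_\eta$. For $v\in\eta$ (which is $J$-invariant), antisymmetry gives
\[
q(Jv) := -\d\d^\bC\psi(Jv,J^2v) = -\d\d^\bC\psi(Jv,-v) = q(v).
\]
So $q(v)=0$ if and only if $q(Jv)=0$, and hence $\cK_\eta$, being the zero locus of $q$ on $\eta$, is $J$-invariant.

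The main obstacle is bookkeeping rather than conceptual. It consists in fixing the sign conventions in Step 1 so that the Laplacian identity comes out with the correct sign, and in ensuring nonnegativity of $\Delta f$ off $\eta$, in particular at points where $\d\psi=0$.
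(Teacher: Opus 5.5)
Your argument is correct and is essentially the standard proof of this lemma. The paper does not reprove it, citing Cardona--Gironella, and later refers back to exactly these computations: $u^*(-\d\d^\bC\psi)=\Delta(\psi\circ u)\,\d s\wedge\d t$ makes $\psi\circ u$ subharmonic, the strong maximum principle makes it constant, and the equality clause of tameness together with $q(Jv)=q(v)$ puts $\d u$ into $\cK_\eta$. The caveats you flag are harmless: with $\d^\bC\psi=\d\psi\circ J$ and $j\partial_s=\partial_t$ your sign is the right one, and the defining inequality quantifies over all $v\in TX$ (equality forcing $v\in\cK_\eta\subset\eta$), so you need neither regularity of the level set nor the first half of Step 3.
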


The above statement clarifies the motivation behind the notion of tame c-symplectic confoliation: namely, pseudo-holomorphic curves tangent to a tame c-symplectic confoliated boundary $(M,\eta,\CS_{\eta,\mu})$ in an interior point can then be controlled simply with topological assumptions on $\cK_\eta$, for instance the fact that there are tangent spheres at all, or no tangent sphere on which $\mu$ restricts positively.

We also recall that, while not every c-symplectic confoliation is automatically tame, as \cite[Example 2.29]{CardonaGironella_ConfoliationsHighDim} shows, there are some sufficient criteria for tameness, which roughly speaking work well if the behavior of $\cK_\eta$ near the points where it changes rank is not ``too wild''.
As this will not be needed in this paper, we simply refer the reader to \cite[Lemma 2.32]{CardonaGironella_ConfoliationsHighDim} and the discussion surrounding it.

\bigskip

The last ingredient from \cite{CardonaGironella_ConfoliationsHighDim} we will need in later sections is the following technical proposition, which allows us to interpolate between two distinct tame complex structures on a c-symplectic confoliation with interpolation supported in a region of the manifold where the characteristic foliation has constant rank.
We extrapolate here the statement of interest for us from the more abstract \cite[Proposition A.1]{CardonaGironella_ConfoliationsHighDim}:

\begin{prop}
    \label{prop:interpolation_tame_complex_structures}
    Let $(\eta,\CS_{\eta,\mu})$ be a tame c-symplectic confoliation on $M$, and let $J$ denote the tame complex structure on it.
    Assume that $U\subset M$ is an open domain such that there is a neighborhood $V$ of $\partial U$ with $\rk\cK_\eta\vert_V$ constant.
    Assume moreover that there is a second complex structure $J'$ on $\eta\vert_U$ also tamed by $\CS_{\eta,\mu}$.
    Then, there are a smaller neighborhood $V'\subset V$ of $\partial U$ and a complex structure $J''$ on $\eta$ tamed by $\CS_{\eta,\mu}$, coinciding with $J$ on the complement of $U\cup V'$ and with $J'$ on $U\setminus V'$.
\end{prop}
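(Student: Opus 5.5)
The plan is to reduce the interpolation to a fiberwise contractibility statement and then run a standard cut-off/homotopy argument over the collar $V$. For each $p\in M$, let $\mathcal{T}_p$ denote the set of complex structures on $\eta_p$ that are tamed by $\CS_{\eta,\mu}$. This means $\mu(v,Jv)>0$ for $v\neq 0$, and $\d\beta(v,Jv)\geq 0$ with equality iff $v\in\cK_\eta$. Both $J$ and $J'$ give sections of $\mathcal{T}=\bigsqcup_p\mathcal{T}_p$, over $M$ and over $U$ respectively. It is enough to build a family $J_t$, $t\in[0,1]$, of continuous (then smoothed) sections of $\mathcal{T}$ over $V\cap U$ with $J_0=J$ and $J_1=J'$. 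Then pick a cut-off $\rho\colon M\to[0,1]$ that vanishes outside $U\cup V'$ and equals $1$ on $U\setminus V'$, where $V'\Subset V$ is a thinner collar of $\partial U$. Finally set $J''_p=J_{\rho(p)}(p)$ on $V'$, $J''=J$ outside $U\cup V'$, and $J''=J'$ on $U\setminus V'$.

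First, I would analyse the fibers over $V$, where $\rk\cK_\eta\equiv 2k$ is constant, so that $\cK_\eta\vert_V$ is a smooth subbundle.

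1. Any $J\in\mathcal{T}_p$ preserves $\cK_\eta$. Indeed, for $v\in\cK_\eta$ we have $\d\beta(Jv,J(Jv))=-\d\beta(Jv,v)=0$, because $v$ lies in the kernel of $\d\beta\vert_\eta$; the equality case of tameness then forces $Jv\in\cK_\eta$.
2. Consequently $J$ induces a complex structure on $\cK_\eta$ tamed by $\mu\vert_{\cK_\eta}$. It also induces one on $\eta/\cK_\eta$ tamed by the non-degenerate form induced by $\d\beta$.
3. Fix a reference $J_0(p)$, depending smoothly on $p\in V$. I would build it by choosing a complement $H$ of $\cK_\eta$ in $\eta$, taking a $\mu$-compatible structure on $\cK_\eta$ and a $\d\beta$-compatible one on $H$ that is also $\mu$-tamed (possible after replacing $\d\beta$ by a large multiple in the cone), and adding them.
4. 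I would then show that $\mathcal{T}_p$ is contractible. The first approach I would try is the Cayley-type description used for the space of $\omega$-tamed structures. Here $\omega=\mu+N\d\beta$ with $N\gg 0$, uniformly over compact subsets of $V$. Writing $J=J_0(1+A)(1-A)^{-1}$, tameness by $\omega$ becomes a norm condition on $A$, so a straight-line homotopy $sA$ stays tamed. I would check that the two additional conditions are preserved along it: $J$ preserving $\cK_\eta$ corresponds to $A$ preserving $\cK_\eta$, which is linear; and positivity of $\d\beta(\cdot,J\cdot)$ is a condition only on the induced quotient structure, where the same star-shaped argument applies.

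Second, constant rank makes $\mathcal{T}\vert_V$ a locally trivial bundle with contractible fibers. The subbundle $\cK_\eta$ and the reference $J_0$ vary smoothly, so the Cayley parametrisations glue into local trivialisations. The two sections $J$ and $J'$ over $V\cap U$ are then homotopic through sections, by the fiberwise straight-line homotopy in the $A$-coordinates (or by general obstruction theory for bundles with contractible fibers). Smoothing this homotopy and composing it with $\rho$ produces $J''$. Openness of the tameness conditions, on compact subsets, guarantees that small smoothing perturbations preserve tameness. The equality condition, i.e.\ the kernel of $\d\beta(\cdot,J\cdot)$ being exactly $\cK_\eta$, is also preserved as long as each $J_t$ keeps $\cK_\eta$ invariant, since then the quotient form stays positive definite.

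The main obstacle is step 4, the contractibility of $\mathcal{T}_p$. It combines the open condition of $\mu$-tameness with the degenerate, non-open condition relative to $\CpS_\eta$, and the straight-line homotopy must preserve both simultaneously. If the single-form Cayley trick does not handle both at once, I would split $\eta=\cK_\eta\oplus H$ and deform in two stages. First I would deform the off-diagonal block of $J$ with respect to this splitting to zero; this is a contractible choice, since the block-triangular structures preserving $\cK_\eta$ form an affine-like family. Then I would deform each diagonal block separately within its own tamed space, using the standard contractibility of tamed structures on $\cK_\eta$ (for $\mu$) and on $H$ (for $\d\beta\vert_H$ together with $\mu\vert_H$, via a large multiple of $\d\beta$). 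Throughout, I would check that $\mu$-tameness of the full $J$ is not lost during the off-diagonal deformation, possibly after rescaling the off-diagonal block first.
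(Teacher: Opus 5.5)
Your outline is fine where it is routine. Step 1 is correct: tamed structures do preserve $\cK_\eta$. A homotopy from $J$ to $J'$ through tamed structures over $U\cap V$, followed by a cut-off, would then finish. Take $\rho$ supported in $U$ and vanishing near $\partial U$, since $J'$ and the homotopy only exist over $U$; as written, $J_{\rho(p)}(p)$ is undefined on $V'\setminus U$.

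The genuine gap is step 4, which carries all the content, and neither mechanism you propose works.
\begin{itemize}
\item \emph{Cayley chart at an $\omega$-compatible $J_0$, $\omega=\mu+N\d\beta$.} This makes only $\omega$-tameness star-shaped. You must preserve tameness by $\mu$ itself: $\mu$-tameness together with $\d\beta(v,Jv)\geq0$ implies $\omega$-tameness, but not conversely. So the segment $sA$ can leave $\mathcal{T}_p$.
\item \emph{Taking $\omega=\mu$ instead.} This needs a $\mu$-compatible $J_0$ whose quotient structure is $\d\beta$-compatible. Such a $J_0$ preserves $H=\cK_\eta^{\perp_\mu}$ and is compatible with both $\mu|_H$ and $\d\beta|_H$. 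That forces the endomorphism $A$ defined by $\d\beta|_H=\mu|_H(A\cdot,\cdot)$ to be diagonalizable with positive eigenvalues. Symplectic compatibility only excludes eigenvalues in $(-\infty,0]$, and non-real eigenvalues are allowed.
\item \emph{Your fallback.} Shrinking the off-diagonal block does preserve $\mu$-tameness, but only if you take $H=\cK_\eta^{\perp_\mu}$; then $\mu(k+h,J(k+h))=\mu(k,J_Kk)+\mu(k,Ch)+\mu(h,J_Hh)$. You are still left needing the space of complex structures on $H$ tamed by both $\mu|_H$ and $\d\beta|_H$ to be contractible. That is an intersection of two contractible sets, not a standard fact. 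Using ``a large multiple of $\d\beta$'' only gives tameness by $\mu|_H+N\d\beta|_H$.
\end{itemize}

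The missing idea is to centre the chart at the tamed structure $J$ itself, working in the complexification.
\begin{itemize}
\item Set $P_J=\ker(J-i)\subset\eta_\bC$. For a real $2$-form $\tau$, extended complex-bilinearly, set $h_\tau(w)=-i\,\tau(w,\bar w)$. Then $h_\tau(v-iJv)=2\tau(v,Jv)$ and $h_\tau(\bar w)=-h_\tau(w)$.
\item By your step 1, $J$ is tamed by $\CS_{\eta,\mu}$ iff $h_\mu>0$ on $P_J$ and $h_{\d\beta}\geq 0$ on $P_J$, with all null vectors in $P_J$ lying in $(\cK_\eta)_\bC$.
\item Consequently $h_\mu<0$ and $h_{\d\beta}\leq0$ on $\bar P_J$. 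So every $n$-dimensional $P$ with $h_\mu|_P>0$, such as $P_{J'}$, is the graph of a complex-linear $Z\colon P_J\to\bar P_J$. Conversely, such a graph is transverse to its conjugate, hence equals $P_{\tilde J}$ for a unique complex structure $\tilde J$ on $\eta$.
\item Apply the identity $q((1-s)a+sb)=(1-s)q(a)+sq(b)-s(1-s)q(a-b)$ with $a-b=(Z_0-Z_1)x\in\bar P_J$. It shows that both tameness conditions are convex in $Z$, including the condition on null vectors: equality at $Z_s$ forces equality at $Z_0$ and $Z_1$.
\end{itemize}
Hence, writing $P_{J'}=\mathrm{graph}(Z')$ over $U$ and defining $J''$ by $P_{J''}=\mathrm{graph}(\rho Z')$ proves the statement. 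This argument is pointwise, so the constant rank of $\cK_\eta$, which your local-triviality step relied on, is not even needed.
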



\subsection{The foliated subset}
\label{sec:foliated_subset}

We use the notation $C_i$ and $U_i$ for the rank partition introduced in
the previous subsection.

As already defined in the introduction, we will call
\emph{fully foliated part} of $\eta$, and denote it by
$\Fol(\eta)\subset M$, the subset of points $p\in M$ for which there is no
path $\gamma\colon [0,1]\to M$ with $\gamma(0)=p$,
$\gamma(1)\in U_{n-1}$, and
$\gamma'(t)\in \eta_{\gamma(t)}$ for all $t\in[0,1]$.

\begin{rmk}
    \label{rmk:fully-foliated_part_ET}
    The definition of fully foliated part of a $3$-dimensional confoliation given in \cite{EliashbergThurston_Confoliations} is different from the above, namely it is defined as the subset of those points through which there is a connected surface that is at all points tangent to $\eta$ and that is \emph{complete}, in the sense that there is a closed ball in $M$ centered at $p$ that intersects said surface in a set each of whose connected components is compact.
    However, it is not hard to see that the definition just given describes the same set in this ambient dimension, and in fact this alternative interpretation as set of those points which cannot be joined to the contact region via a tangent arc has already been used in the literature, see e.g.\ \cite{Vogel_RigidityVersusFlexibilityTightConfol,Vogel_UniquenessApproximatingContact}.
\end{rmk}

With this notation set, we are ready to prove \Cref{prop:fully_foliated_part_closed} from the Introduction.

\begin{proof}[Proof of \Cref{prop:fully_foliated_part_closed}]
Let $p\in M\setminus \Fol(\eta)$.
By definition, there exists a smooth curve
\[
\gamma:[0,1]\to M, \qquad \dot\gamma(t)\in \eta, \quad 
\gamma(0)=p,\ \gamma(1)=q\in U_{n-1}.
\]

Since $\gamma([0,1])$ is compact, it can be covered by finitely many coordinate charts
$\{V_j\}_{j=1}^k$ with the following property: on each $V_j$ there are local coordinates $(x,z)\in \mathbb{R}^{2n}\times \mathbb{R}$ such that 
\[
\eta=\ker\!\Bigl(dz+\sum_{i=1}^{2n} a_i(x,z)\,dx_i\Bigr),
\]
so that $\eta$ is a (local) connection, i.e.\ a horizontal subbundle, for the projection 
$\pi_j:V_j\to \mathbb{R}^{2n}$, $\pi_j(x,z)=x$.
In particular, a curve $\Gamma(t)=(x(t),z(t))$ is tangent to $\eta$
if and only if it satisfies the first–order ODE
\begin{equation}\label{eq:loc_ode}
\dot z(t) = -\sum_{i=1}^{2n} a_i\bigl(x(t),z(t)\bigr)\,\dot x_i(t).
\end{equation}

\smallskip
\emph{Step 1: One chart.}
Assume first that $\gamma([0,1])\subset V_1$.
For a point $p'$ sufficiently close to $p$ in $V_1$,
we construct an $\eta$–tangent curve $\gamma_{p'}$ starting at $p'$ and remaining close to $\gamma$.
Write $\gamma(t)=(x(t),z(t))$; by what was said above, its coordinates satisfy \Cref{eq:loc_ode}.
Similarly, we write $\gamma_{p'}(t)=(x'(t),z'(t))$ for the desired curve $\gamma_{p'}$, whose coordinates also will need to satisfy \Cref{eq:loc_ode}.

If $\pi_1(p')=\pi_1(p)$ (i.e.\ $x'(0)=x(0)$), keep the same base path $x(t)=\pi_1(\gamma(t))$
and determine $z'(t)$ uniquely by solving \eqref{eq:loc_ode} with the initial condition $z'(0)$.
By standard ODE theory (existence–uniqueness and continuous dependence; e.g.\ Picard--Lindel\"of),
a unique smooth solution exists on $[0,1]$, since $M$ is compact and the coefficients $a_i$ are smooth.

If instead $\pi_1(p')\neq \pi_1(p)$, slightly modify the base curve:
in the Euclidean chart $V_1$, set
\[
x'(t) := x(t) + \bigl(\pi_1(p') - \pi_1(p)\bigr),
\]
so that $x'(0)=\pi_1(p')$ and $x'(t)$ remains uniformly close to $x(t)$ for $p'$ near $p$.
Then solve \eqref{eq:loc_ode} for $z'(t)$ with this base path and the initial condition $z'(0)$.
Again, by standard ODE theory, a unique smooth solution exists and depends smoothly on $p'$.
In either case, $\gamma_{p'}(t)=(x'(t),z'(t))$ is $\eta$–tangent, $\gamma_{p'}(0)=p'$, and the endpoint
\[
E_1(p') := \gamma_{p'}(1)
\]
depends continuously (in fact, even smoothly) on $p'$.

\smallskip
\emph{Step 2: Several charts and gluing.}
In the general case $\gamma([0,1])\subset \bigcup_{j=1}^k V_j$,
choose a partition $0=t_0<t_1<\cdots<t_k=1$ with
$\gamma([t_{j-1},t_j])\subset V_j$
and $r_j:=\gamma(t_j)\in V_j\cap V_{j+1}$.
Applying Step~1 on $V_1$ gives, for $p'$ close to $p$, a first segment
$\gamma_{p'}|_{[0,t_1]}$ ending at $E_1(p')$ close to $r_1$.
This endpoint lies in $V_1\cap V_2$ for $p'$ near $p$ and serves as the initial condition
for the construction in $V_2$, yielding a second continuous endpoint map $E_2$.
Iterating the procedure produces a composition
\[
E := E_k\circ \cdots \circ E_2\circ E_1,
\]
which is continuous near $p$.
Therefore $E(p')$ is close to $q=\gamma(1)$ for all $p'$ in a neighborhood of $p$.

The resulting curve is a priori only piecewise smooth at the subdivision points. Using the local connection charts above, we may smooth its base path near these points and take its horizontal lift, obtaining a smooth $\eta$-tangent curve with the same initial point and endpoint arbitrarily close to $E(p')$.

\smallskip
Since $U_{n-1}$ is open, choose an open neighborhood $V_q\Subset U_{n-1}$ of $q$,
and by shrinking the neighborhood of $p$ and choosing the smoothing sufficiently small ensure that the endpoint of the resulting smooth curve lies in $V_q$.
Thus every $p'$ sufficiently close to $p$ belongs to $M\setminus \Fol(\eta)$,
showing that $\Fol(\eta)$ is closed, as claimed.
\end{proof}


\subsection{Strict c-symplectic confoliations}
\label{sec:strict_c-symplectic_confoliations}

The aim of this subsection is to introduce a special subclass of c-symplectic confoliations that will guarantee stability of weak fillability under the notion of contact approximations.

The motivation for this comes from the following observation of Ding and Geiges \cite[Proposition 3.5]{DingGeiges_ContactCircleBundles}: up to attaching a large cylindrical end to the weak filling $(W,\Omega)$ of a contact structure $\xi=\ker\alpha$ as defined in \cite{MassotNiederkrugerWendl_WeakFillings}, one can achieve a (seemingly much more strict) compatibility condition between $\CS_{\xi}$ and $\omega:=\Omega\vert_\xi$.
Namely, up to replacing $\omega$ by $\omega + T \d\alpha$, which can be achieved by attaching a long cylindrical end, the polynomial in $t$ given by 
\[
    \alpha \wedge (\omega + t\d\alpha)^n
\]
can be guaranteed to have every coefficient of the form $f \Vol$, where $\Vol$ is a volume form on $M$ and $f> 0$ everywhere.

For simplicity, a maximum-degree differential form bounded below by $f\Vol$ with $f>0$ will be called \textbf{positive}; similarly, it will be called \textbf{non-negative} if $f\geq 0$.

As recalled in the previous sections, the notion of c-symplectic confoliation from \cite{CardonaGironella_ConfoliationsHighDim} is based on the same symplectic compatibility condition  $\beta\wedge (\mu + t\d\beta)^n>0$ for all $t\geq 0$, directly inspired from \cite{MassotNiederkrugerWendl_WeakFillings}.
However, as the following example shows, the analogue of \cite[Proposition 3.5]{DingGeiges_ContactCircleBundles} in our setup is false. Namely, one cannot in general arrange, up to adding a large multiple of $\d\beta$ to $\mu$, that each coefficient of the polynomial in $t$ given by $\beta\wedge (\mu + t\d\beta)^n$ is of the form $f\Vol$ for some function $f\geq 0$ and $\Vol$ volume form on $M$.
(Note that one cannot hope for $f>0$, as there will in general be points where $\rk\cK_\eta$ is bigger than $0$, and there the polynomial will have degree strictly less than $n$.)
\begin{exa}
    \label{exa:prop_DingGeiges_false_for_confoliations}
    On $\bR^5$ with coordinates $(z,x_i,y_i)$ consider $\beta=\d z + z^2 \gamma$, with $\gamma = -x_1 \d y_1 - x_2 \d y_2 + x_1 \d x_2 - y_1 \d y_2$, and $\omega = \sum_{i=1}^2 \d x_i \wedge \d y_i$.
    Then, we have $\beta\wedge (\omega+ t\d\beta )^ 2 = dz \wedge (\omega + t z^2 \d\gamma)^2 = P(t) \d z \wedge \omega^2$, where $P(t) = 1 - 2z^2 t + 2z^4 t^2$.
    In order for the shift $P_T(t):=P(t+T)$ to have positive coefficients as a polynomial of $t$, we need in particular $P'(T)>0$. 
    However, we have $P'(T)= 2z^2(2z^2T-1)$, hence this is positive iff $T>\frac{1}{2z^2}$, which explodes to $+\infty$ as one approaches the origin of $\bR^5$.
\end{exa}

Note that it is true that, for each fixed point $p\in M$ one can find $T_p>0$ such that the shift $\omega+T\d\beta$ makes all the coefficients of the polynomial \emph{at the point $p$} strictly non-degenerate. 
However, the issue that the above example highlights is that in general $T_p$ might need to diverge to $+\infty$ as $p$ goes from a subset where $\rk\cK_\eta=2i$ asymptotically to a point where $\rk\cK_\eta=2i+2$.

Motivated by this issue, we introduce the following definition:
\begin{defn}
    \label{defn:strict_c-symplectic_confoliation}
    A c-symplectic confoliation $(\eta,\CS_{\eta,\mu})$ on $M$ is called \textbf{strict} if, for some (and hence any) choice of $\beta$ defining $\eta$, the polynomial 
    \begin{equation}
    \label{eqn:polynomial_form}
        P(t):=\beta\wedge (\mu + t \d\beta )^n 
    \end{equation}
    with coefficients in the space of $(2n+1)$-differential forms
    (which is known to evaluate to a volume form for all $t\geq 0$ by definition of c-symplectic confoliation) satisfies the following:
    there is $T_0>0$ such that, for all $t>T_0$, the derivatives
    $P'(t)$, $\ldots$, $P^{(n)}(t)$ are all non-negative (as maximum-degree differential forms, depending on $t$). 
\end{defn}

Due to the technical nature of the above definition, one might be tempted to think that strict c-symplectic confoliations are scarce. However, the opposite seems to be rather the case.
As we will see in \Cref{sec:examples_contact_strict_and_tame_approx_and_deform}, many natural examples of c-symplectic confoliations are in fact strict.
What's more, $3$-dimensional confoliations, c-symplectic foliations and also c-symplectic confoliations naturally arising from weakly filled contact structures are also always strict, according to the following remark.

\begin{rmk}
\label{rmk:3Dconf_and_fol_and_weakly-filled-contact_are_strict}
    Let $(\eta,\CS_{\eta,\mu})$ be a c-symplectic confoliation.
    If all coefficients of $P(t)$ from \eqref{eqn:polynomial_form} are non-negative, then it is not hard to see that the c-symplectic confoliation is strict.

    It follows that any $3$-dimensional confoliation in the sense of Eliashberg--Thurston \cite{EliashbergThurston_Confoliations} and any c-symplectic foliation in any ambient dimension are in particular strict as c-symplectic confoliations.
    Similarly, thanks to \cite[Proposition 3.5]{DingGeiges_ContactCircleBundles}, a weak filling $(W,\Omega)$ of a contact manifold $(M,\xi)$ naturally gives a strict c-symplectic confoliation $(\xi,\CS_{\xi,\mu})$, where $\mu=\Omega\vert_\xi$.
\end{rmk}

In fact, \Cref{rmk:3Dconf_and_fol_and_weakly-filled-contact_are_strict} is a special case of the following general principle:
\begin{lem}
\label{lem:sometimes_can_arrange_c-sympl_confol_to_be_strict}
Let $(\eta,\CS_{\eta,\mu})$ be a c-symplectic confoliation. If $\cK_\eta$ ``increases rank by at most $2$ at a time'',
then, up to shifting $\mu$ to $\mu+T\d\beta$ for a $T>0$ big enough, the polynomial $P(t)$ from \eqref{eqn:polynomial_form} can be arranged to have non-negative coefficients.
In particular, $(\eta,\CS_{\eta,\mu})$ is strict.
\end{lem}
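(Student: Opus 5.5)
Write $P(t)=\sum_{j=0}^n \binom{n}{j} t^j\,\beta\wedge\mu^{n-j}\wedge\d\beta^j=:\sum_j t^j c_j$. Shifting $\mu$ to $\mu+T\d\beta$ replaces $P(t)$ by $P_T(t)=P(t+T)$, whose coefficients are $c^T_j=P^{(j)}(T)/j!$. So the goal is to find a uniform $T$ such that $P^{(j)}(T)\geq 0$ everywhere for all $j$. Since $P^{(j)}(t+s)=\sum_{l\ge 0} P^{(j+l)}(t)s^l/l!$, non-negativity of all coefficients at $T$ propagates to every $t\ge T$. This gives the ``in particular'' of \Cref{defn:strict_c-symplectic_confoliation} at once, so everything reduces to the coefficient statement.

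\textbf{Pointwise structure.} At a point $p$ with $\rk\cK_\eta(p)=2i$, the form $\d\beta\vert_\eta$ has rank $2(n-i)$, so $c_j(p)=0$ for $j>n-i$. The top surviving coefficient $c_{n-i}(p)$ is a positive multiple of $\beta\wedge(\mu\vert_{\cK_\eta})^i\wedge(\d\beta\vert_{\eta/\cK_\eta})^{n-i}$. By non-degeneracy of $\mu$ on $\cK_\eta$ and compatibility, this term is strictly positive. Consequently $P(p;t)$ is a polynomial of exact degree $n-i$ with positive leading coefficient, and each derivative $P^{(j)}(p;t)$ with $j\le n-i$ is positive for $t$ large. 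The whole difficulty, as \Cref{exa:prop_DingGeiges_false_for_confoliations} shows, is uniformity near points where the rank jumps. I interpret ``increases rank by at most $2$ at a time'' as follows: every point of $C_i$ has a neighbourhood meeting only $C_i$ and $C_{i-1}$ (closure of $C_{i-1}$ meets at most $C_i$ besides itself).

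\textbf{Uniform bound, by induction on the strata.} I would argue by compactness and upper semicontinuity (\Cref{lem:rank_function_upper_semi-continuous}). Fix $p\in C_i$. On a neighbourhood $V$ of $p$ containing only ranks $2i$ and $2i-2$, all $c_j$ with $j>n-i+1$ vanish. The degree-$(n-i)$ coefficient $c_{n-i}$ is continuous and positive at $p$, hence $\geq\varepsilon>0$ on a smaller $V$. On $V\cap C_{i-1}$ we additionally have $c_{n-i+1}>0$, and on $V\cap C_i$ we have $c_{n-i+1}=0$. Hence on $V$ we have $c_{n-i+1}\geq 0$, with $c_{n-i}\geq\varepsilon$ and all $c_j$ bounded. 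The key point is that, for a polynomial $\sum_{j\le d+1} c_j t^j$ with $c_{d+1}\ge0$, $c_d\ge\varepsilon$ and $|c_j|\le C$, each derivative $P^{(j)}(T)$ for $j\le d$ is at least its contribution from the $c_d$ and $c_{d+1}$ terms minus $O(T^{d-j-1})$. The $c_d$ contribution grows like $\varepsilon T^{d-j}$ and the $c_{d+1}$ contribution is non-negative. Therefore one $T_V$ works on all of $V$, regardless of how small $c_{d+1}$ gets. The $(d+1)$-st derivative is just $(d+1)!\,c_{d+1}\geq0$. Covering $M$ by finitely many such $V$ and taking the maximum of the $T_V$ gives the desired $T$.

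\textbf{Main obstacle.} The main obstacle is exactly this step. One must check that the only ``uncontrolled'' coefficient near $p$ is the single extra top one, $c_{n-i+1}$, which is non-negative. If the rank could jump by $4$, then two coefficients $c_{n-i+1}$ and $c_{n-i+2}$ could appear, with the lower one negative and of size comparable to the square root of the higher one, as in the example. The hypothesis is used precisely to rule this out. I would also double-check that $c_{n-i}$ is positive at rank-$2i$ points via a local splitting $\eta=\cK_\eta\oplus\cK_\eta^{\perp_{\d\beta}}$ near $p$.
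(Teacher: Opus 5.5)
Your proof is correct, and it rests on the same observation as the paper's proof. Near a point of rank $2i$ only the ranks $2i$ and $2i-2$ occur. (Your neighbourhood reformulation is equivalent to the paper's sequential definition, by upper semi-continuity and finiteness of the rank partition.) So the only coefficient beyond the locally positive $c_{n-i}$ is $c_{n-i+1}$, which is $\geq 0$ there.

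Where you differ is in organization. The paper argues by reverse induction on the strata:
\begin{itemize}
\item It starts from the compact top stratum $C_{i_{\max}}$.
\item At each step it splits the $t^k$-coefficient of the shifted polynomial at points of $C_i$ into a positive multiple of the new term $T^{n-i-k}\beta\wedge\mu^i\wedge\d\beta^{n-i}$, plus a remainder given by the same formula as the coefficient at points of $C_{i+1}$.
\item It controls that remainder on a neighbourhood of $C_{i+1}\cap\overline{C_i}$ via the inductive hypothesis.
\item It then treats the compact set $C_i\setminus U_{i+1}$ separately.
\end{itemize}

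Your version is purely local plus a finite cover of $M$, with no induction at all, despite your heading. It also states explicitly the estimate that makes everything uniform. Namely, if the leading coefficient is $\geq\varepsilon$ on an open set, the lower coefficients are bounded and the next one is non-negative, then a single threshold works for all $T\geq T_V$. The paper's inductive step leaves this implicit. It needs exactly this estimate, together with compactness of $C_{i+1}\cap\overline{C_i}=\overline{C_i}\cap\{\rk\cK_\eta\geq 2i+2\}$, to pass from information on $C_{i+1}$ to non-negativity on a whole neighbourhood uniformly in $T$.

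Your route is therefore shorter and self-contained. The paper's stratum-by-stratum induction mirrors the structure it reuses later in the proof of stability of weak fillability under strict approximations.
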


Here, $\cK_\eta$ is said to \textbf{increase rank by at most $2$ at a time} if for any sequence of points $p_n$ of $M$ such that $p_n\to p$ and with $\rk(\cK_\eta)(p_n)=2i$, then $\rk(\cK_\eta)(p)\leq 2i+2$.
(Recall that, by upper semi-continuity, one also has that $\rk(\cK_\eta)(p)\geq 2i$.)
Note that this includes in particular the case where $\rk\cK_\eta$ is constant on all of $M$.

\begin{proof}
We prove, by reverse induction\footnote{In order to avoid confusion, we rather argue by reverse induction instead of using $n-i$ as index.} on the index $i=i_{\max},\ldots, i_{\min}$ with $i_{\max}\leq n$ and $i_{\min}\geq 0$, that there is $T_{i}>0$ such that for all $T>T_i$, the coefficients of 
\begin{equation}
\label{eqn:polynomial_form_shifted}
P_T(t):=\beta\wedge (\mu + T\d\beta+t\d\beta)^n
\end{equation}
are all non-negative on $C_i:=\{\rk\cK_\eta=2i\}$.
Then, the desired global shift making all the coefficients of \Cref{eqn:polynomial_form_shifted} non-negative will just be $T_{\mathrm{tot}}:=\max_i T_i$.

\medskip

Let $i_{\max}$ be the maximum $i$ such that there is a point $p\in M$ with $\rk\cK_\eta(p)=2i$, and $C_{i_{\max}}\subset M$ the subset of points where $\rk\cK_\eta=2i_{\max}$.
By upper semi-continuity of the rank, $C_{i_{\max}}$ is a closed subset of $M$, hence a compact.

One can compute, at all points of $C_{i_{\max}}$, that
\begin{equation}
\label{eqn:polynomial_form_shifted_computed}
\begin{split}
    \beta \wedge (\mu + T \d\beta + t \d\beta)^n & \, = \sum_{k=0}^{n-i_{\max}} \binom{n}{k} t^k \beta\wedge (\mu + T \d\beta )^{n-k}\wedge \d\beta^k \\
    & \, = \sum_{k=0}^{n-i_{\max}} \binom{n}{k} t^k \beta\wedge \left(\sum_{j=0}^{n-i_{\max}-k} \binom{n-k}{j} T^j \mu^{n-k-j} \wedge \d\beta^j\right)\wedge \d\beta^k \, .
\end{split}
\end{equation}
Hence, at points of $C_{i_{\max}}$, the coefficient of $t^k$ has a leading term in $T$ given by 
\[
\binom{n}{k}\binom{n-k}{n-i_{\max}-k}T^{n-i_{\max}-k}\beta\wedge \d\beta^{n-i_{\max}}\wedge \mu^{i_{\max}} \, ,
\]  
which is strictly positive as $\mu$ is non-degenerate on $\cK_\eta$.
Moreover, for all points in $C_{i_{\max}}$, the same $T$-degree $n-i_{\max}-k$ is non-degenerate and dominates the coefficient of the term with $t$-degree $k$.
Hence, by compactness of $C_{i_{\max}}$, there is $T_{i_{\max}}>0$ big enough so that for all $T>T_{i_{\max}}$ the coefficients of the shifted polynomial in \eqref{eqn:polynomial_form_shifted_computed} are positive.\footnote{Note that we only look at the coefficients that \emph{can} be strictly positive, i.e.\ those containing a term $\beta\wedge \d\beta^k$ that is allowed to be non-zero. Obviously, the $t^{k}$-degree coefficients, with $k>n-i_{\max}$ will be zero at points of $C_{i_{\max}}$. Hence, at the best one can achieve non-negative coefficients in \Cref{defn:strict_c-symplectic_confoliation}.}

\medskip

For the inductive step, we assume now that the property holds for $i+1$, and we need to prove that it holds for $i$.

Let us start by pointing out that the additional difficulty with respect to the base case of $i_{\max}$ is that $C_i$ is \emph{not} compact. 
In other words, despite the fact that for each given $p\in C_i$ one can definitely find a shift $T_p>0$ making all coefficients positive at that point, such $T_p$ might in general explode to $+\infty$ as $p$ approached $C_{i+1}$; this is exactly the issue pointed out in \Cref{exa:prop_DingGeiges_false_for_confoliations}.
However, the hypothesis that $\cK_\eta$ increases rank by at most $2$ at a time allows us to conclude nonetheless as follows.

We claim that there is a small neighborhood $U_{i+1}$ of $C_{i+1}\cap \overline{C_i}$ such that there is a shift $T_{U_{i+1}}>0$ which, for $T>0$, makes the coefficients of \eqref{eqn:polynomial_form_shifted} non-negative for all $p\in U_{i+1}$.
Note that each $p\in U_{i+1}\setminus C_{i+1}$ actually lies in $C_i$, by the assumption that $\cK_\eta$ increases rank by at most $2$ at a time.
Hence, proving the claim is enough to conclude the inductive step, as $C_{i}\setminus U_{i+1}$ is then closed in $M$ and hence compact, and over $C_{i}\setminus U_{i+1}$ we can hence find a shift $T_{i}'>0$ making all the coefficients non-negative, and considering $T_i:=\max(T_i',T_{U_{i+1}})$ will then work for the conclusion of the inductive step.
We hence prove the claim.

Consider a point $p\in C_i$ very near to $C_{i+1}$.
Then, at that point we can develop as follows:
\begin{equation}
\label{eqn:polynomial_form_shifted_computed_inductive_step}
\begin{split}
    \beta \wedge &(\mu + T \d\beta + t \d\beta)^n  = \sum_{k=0}^{n-i} \binom{n}{k} t^k \beta\wedge (\mu + T \d\beta )^{n-k}\wedge \d\beta^k \\
    & \, = \sum_{k=0}^{n-i} \binom{n}{k} t^k \beta\wedge \left[\sum_{j=0}^{n-i-k} \binom{n-k}{j} T^j \mu^{n-k-j} \wedge \d\beta^j\right]\wedge \d\beta^k \\
    & \, \overset{(*)}{=} \sum_{k=0}^{n-i} \binom{n}{k} t^k \beta\wedge \left[ \underbrace{\binom{n-k}{n-i-k} T^{n-i-k}\mu^{i}\wedge \d\beta^{n-i-k}}_{\Gamma_{i}} \right. \\
    & \qquad\qquad\qquad\qquad\qquad \left. + \underbrace{\sum_{j=0}^{n-i-1-k} \binom{n-k}{j} T^j \mu^{n-k-j} \wedge \d\beta^j}_{=:\Gamma_{i+1}}\right]\wedge \d\beta^k \, ,
\end{split}
\end{equation}
where $(*)$ simply comes from isolating the term with $j=n-i-k$ in the sum.
Now, we know that the term $\beta\wedge \Gamma_{i}\wedge \d\beta^k$ in the above sum is strictly positive at all points of $C_{i}$ (even though it shrinks to zero as one approaches $C_{i+1}$).
Moreover, by the inductive hypothesis, the term $\beta\wedge \Gamma_{i+1}\wedge \d\beta^k$ is non-negative for all $T>T_{i+1}$ and for all points of a sufficiently small neighborhood, that we denote indeed $U_{i+1}$ as in the claim, of $C_{i+1}\cap \overline{C_i}$.
In particular, there is $T_{U_{i+1}}$ such that for all $T>T_{U_{i+1}}$, the $T$-shifted polynomial \eqref{eqn:polynomial_form_shifted_computed_inductive_step} has all coefficients (as polynomial in $t$) that are non-negative, as desired in the claim.

As already explained, this concludes the inductive step, and hence the whole lemma.
\end{proof}

In the opposite direction to \Cref{lem:sometimes_can_arrange_c-sympl_confol_to_be_strict}, we have the following (almost tautological) analogue of \cite[Proposition 3.5]{DingGeiges_ContactCircleBundles}:
\begin{lem}
    \label{lem:strict_c-symplectic_confoliation_positive_coefficients_polynomial}
    Let $(\eta,\CS_{\eta,\mu})$ be a strict c-symplectic confoliation.
    Then, up to shifting $\mu$ to $\mu+T\d\beta$, the coefficients of the polynomial $P(t)$ from \eqref{eqn:polynomial_form} can be made non-negative.
\end{lem}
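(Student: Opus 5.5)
The argument is a Taylor expansion of the polynomial in $t$. For a fixed defining $1$-form $\beta$, write $P(t)=\beta\wedge(\mu+t\d\beta)^n$; this is a polynomial of degree at most $n$ in $t$ with coefficients in the space of $(2n+1)$-forms on $M$. Shifting $\mu$ to $\mu+T\d\beta$ replaces $P$ by
\[
P_T(t):=\beta\wedge(\mu+T\d\beta+t\d\beta)^n=P(T+t).
\]
Its coefficients as a polynomial in $t$ are the Taylor coefficients of $P$ at $T$:
\[
P_T(t)=\sum_{k=0}^n \frac{t^k}{k!}\,P^{(k)}(T).
\]
This identity holds pointwise on $M$ and involves only finitely many terms, so it is an honest equality of form-valued polynomials. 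It therefore suffices to find a $T$ such that each $P^{(k)}(T)$, for $k=0,\ldots,n$, is non-negative as a maximum-degree form.

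The case $k\geq 1$ follows directly from the strictness assumption. By \Cref{defn:strict_c-symplectic_confoliation} there is $T_0>0$ such that $P'(t),\ldots,P^{(n)}(t)$ are non-negative for all $t>T_0$. Fix any $T>T_0$; then every coefficient of $t^k$ with $k\geq1$ in $P_T$ is non-negative.

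The case $k=0$ uses the c-symplectic condition. The constant coefficient is
\[
P(T)=\beta\wedge(\mu+T\d\beta)^n .
\]
Since $T\geq 0$, this form is a volume form compatible with the orientation, by the definition of c-symplectic confoliation. Hence it is in fact positive.

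The only point requiring care is consistency with the meaning of ``non-negative''. Non-negativity was defined by comparison with a fixed reference volume form $\Vol$, namely as $f\Vol$ with $f\geq 0$. The Taylor identity holds pointwise, so pointwise non-negativity of each $P^{(k)}(T)$ transfers directly to the coefficients of $P_T$; no uniformity in $T$ is needed beyond choosing a single $T>T_0$. The argument should also be independent of the choice of $\beta$. Replacing $\beta$ by $g\beta$ with $g>0$ changes $\d\beta|_\eta$ to $g\,\d\beta|_\eta$ on $\eta$, so the shift should be carried out with the same $\beta$ used in the strictness condition, namely the $\beta$ for which $T_0$ was obtained. Because the statement only asks for the existence of a shift, this causes no difficulty. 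I expect no real obstacle here: the lemma is essentially a reformulation of \Cref{defn:strict_c-symplectic_confoliation} via Taylor's formula.
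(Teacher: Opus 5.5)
Your proposal is correct and follows the paper's own proof: you expand $P(t+T)=\sum_{k=0}^n \frac{P^{(k)}(T)}{k!}t^k$ and apply the strictness condition to the coefficients with $k\geq 1$. Your explicit treatment of the constant coefficient, using that $P(T)$ is positive by the c-symplectic condition, fills in a detail that the paper leaves implicit.
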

\begin{proof}
    This simply follows from the identity
    \[
        \beta \wedge (\mu + T \d\beta + t \d\beta)^n = P(t+T)= \sum_{i=0}^n \frac{P^{(i)}(T)}{i!}t^i
    \]
    and the definition of strict c-symplectic confoliation.
\end{proof}

\bigskip

A similar issue to the one above is present in the context of weak fillings of c-symplectic confoliations.
Namely, even though it is true according to \cite[Proposition 3.5]{DingGeiges_ContactCircleBundles} that for weak fillings $(W,\Omega)$ of contact manifolds $(M,\xi)$ one can always arrange, for any defining form $\alpha$ for $\xi$ and for $T>0$ big enough that 
\[
    \alpha\wedge (\Omega\vert_{\xi} + T\d\alpha + t \d\alpha)^n>0 \text{ for all }t\geq 0 \, ,
\]
this is not always true for weak fillings $(W,\Omega)$ of a c-symplectic confoliation $(\eta,\CS_{\eta,\mu})$, due to the same phenomenon discussed above.
(Recall that, according to the definition of weak fillability, we only know that $\Omega\vert_{\cK_\eta}=f\mu\vert_{\cK_\eta}$ for some positive function $f$ on $M$.)

This motivates the following definition, analogous to \Cref{defn:strict_c-symplectic_confoliation}:
\begin{defn}
    \label{def:strict_weak_filling}
    Let $(\eta,\CS_{\eta,\mu})$ be a c-symplectic confoliation.
    A weak filling $(W,\Omega)$ of $(\eta,\CS_{\eta,\mu})$ is said to be \textbf{strict} if, for an (and hence every) auxiliary defining $1$-form $\beta$ for $\eta$,
    \begin{equation}
    \label{eqn:polynomial_form_weak_filling}
        P_\Omega(t):= \beta\wedge (\Omega\vert_\eta + t\d\beta)^n 
    \end{equation}
    has derivatives $P_\Omega'(T)$, $\ldots$, $P^{(n)}_\Omega(T)$ strictly positive for $T>0$ large enough.
\end{defn}
Note that any weak filling of a $3$-dimensional confoliation, in the sense of \cite{EliashbergThurston_Confoliations} is also a strict weak filling.

Analogously to \Cref{lem:strict_c-symplectic_confoliation_positive_coefficients_polynomial} we have the following:
\begin{lem}
    \label{lem:strict_weak_filling_c-symplectic_confoliation_positive_coefficients_polynomial}
    Let $(\eta,\CS_{\eta,\mu})$ be a c-symplectic confoliation and $(W,\Omega)$ a strict weak filling.
    Then, up to shifting $\Omega\vert_\eta$ to $\Omega\vert_\eta+T\d\beta$, which amounts to attaching a long cylindrical end in terms of the filling, the coefficients of the polynomial $P_\Omega(t)$ from \eqref{eqn:polynomial_form_weak_filling} can be made non-negative.
\end{lem}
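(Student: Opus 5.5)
The argument mirrors the proof of \Cref{lem:strict_c-symplectic_confoliation_positive_coefficients_polynomial}, with $\mu$ replaced by $\Omega\vert_\eta$. Fix a defining $1$-form $\beta$ for $\eta$. By multilinearity of the wedge product, the polynomial from \eqref{eqn:polynomial_form_weak_filling} satisfies, for every $T\geq 0$,
\[
    \beta\wedge(\Omega\vert_\eta + T\d\beta + t\d\beta)^n = P_\Omega(t+T) = \sum_{i=0}^n \frac{P_\Omega^{(i)}(T)}{i!}\,t^i ,
\]
which is Taylor's formula for a polynomial of degree $n$ in $t$ with coefficients in top-degree forms. So the coefficients of the polynomial associated to the shifted form $\Omega\vert_\eta+T\d\beta$ are exactly $P_\Omega^{(i)}(T)/i!$.

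By \Cref{def:strict_weak_filling}, there is $T_0>0$ such that $P_\Omega^{(i)}(T)$ is positive for all $T>T_0$ and $1\leq i\leq n$. The remaining coefficient is $P_\Omega(T)$ itself. It is positive because $(W,\Omega)$ is a weak filling: $\Omega\vert_\eta$ is symplectically compatible with $\CpS_\eta$, so $\Omega\vert_\eta + T\d\beta\vert_\eta$ is non-degenerate on $\eta$ for every $T\geq0$. Moreover, it induces the positive orientation, since at $T=0$ this orientation is the one fixed by the filling, and $T\mapsto \beta\wedge(\Omega\vert_\eta+T\d\beta)^n$ never vanishes, so by continuity in $T$ its sign is constant. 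Choosing any $T>T_0$ therefore makes all coefficients non-negative, in fact positive.

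The second part of the statement is that this shift amounts to attaching a long cylindrical end. This follows the standard collar argument of \cite[Proposition 3.5]{DingGeiges_ContactCircleBundles}, which we plan to reproduce. Near $M=\partial W$, choose a collar $(-\epsilon,0]\times M$ on which $\Omega = \omega_M + \d r\wedge\sigma$, with $\omega_M$ closed on $M$ restricting to $\Omega\vert_{TM}$. On $[0,T]\times M$ we then glue the closed form $\Omega + \d(\chi(r)\beta)$, where $\chi$ is a non-decreasing function vanishing near $0$ and equal to the constant $T$ near $r=T$, so that on the new boundary the restriction to $\eta$ is $\Omega\vert_\eta + T\d\beta$.

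The main obstacle, and the step that needs care, is showing this extended form is symplectic on the whole collar. Its top power is a sum of terms of the shape $\chi'(r)\,\d r\wedge\beta\wedge(\Omega\vert_\eta+\chi(r)\d\beta)^n$ plus terms coming from the old collar. The first kind is non-negative by the compatibility argument above, applied at each value $\chi(r)\in[0,T]$. Positivity is then ensured by keeping the original symplectic collar term dominant, which we do by taking $\chi'$ small relative to the other terms; this forces the collar to be long, which is exactly the "long cylindrical end". The new boundary stays weakly filled: $\Omega\vert_{\cK_\eta}$ is unchanged because $\d\beta\vert_{\cK_\eta}=0$, and compatibility with $\CS_{\eta,\mu}$ persists because the cone only gains the direction $\d\beta$, which it already contains.
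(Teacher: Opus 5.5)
Your treatment of the coefficients is exactly the paper's implicit argument, and it is correct: the Taylor identity $P_\Omega(t+T)=\sum_i P_\Omega^{(i)}(T)\,t^i/i!$ combined with \Cref{def:strict_weak_filling}, with $P_\Omega(T)>0$ coming from the compatibility condition of a weak filling. One caveat concerns your ``in fact positive''. Where $\rk\cK_\eta=2i>0$, the coefficients of $t^k$ with $k>n-i$ vanish identically, because $\d\beta\vert_\eta$ has rank $2(n-i)$. So strict positivity cannot hold there, and the ``strictly positive'' in the definition can only mean non-negative, which is why the lemma concludes non-negativity. The paper does not reprove the cylindrical-end remark; it takes it from the boundary normal form \cite[Lemma 2.45]{CardonaGironella_ConfoliationsHighDim}.

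Your own collar argument for that remark, which you flag as the delicate step, does not work as written.

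\emph{The domination runs the wrong way.} The top power of $\omega_M+\d r\wedge\sigma+\d(\chi\beta)$ is $(n+1)\,\d r\wedge(\sigma+\chi'\beta)\wedge(\omega_M+\chi\,\d\beta)^n$. The only term you control for every value of $\chi$ is $\chi'\,\beta\wedge(\omega_M+\chi\,\d\beta)^n$. The ``old collar'' term $\sigma\wedge(\omega_M+\chi\,\d\beta)^n$ is known to be positive only for $\chi$ near $0$. Its $\chi$-coefficients $\sigma\wedge\d\beta^k\wedge\omega_M^{n-k}$ are uncontrolled, so it can be negative once $\chi$ is close to $T$, however slowly $\chi$ grows. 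Making $\chi'$ small therefore goes in the wrong direction.

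\emph{The collar form is not available in general.} The form $\omega_M+\d r\wedge\sigma$ is closed only if $\d\sigma=0$. A closed $\sigma$ that is non-vanishing on $\ker\omega_M$ need not exist; for instance, on $M=\bS^3$ every closed $1$-form is exact and hence vanishes somewhere.

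\emph{The standard fix.} First put $\Omega$ near $M$ in the form $\omega_M+\d(r\beta)$ by Moser's trick. This is possible because $\beta$ does not vanish on $\ker\omega_M$, since $\Omega\vert_\eta$ is non-degenerate, and the orientations agree by convexity. Then $\omega_M+\d(\phi(r)\beta)$, with $\phi(r)=r$ near $0$ and $\phi'>0$, has top power exactly $(n+1)\phi'\,\d r\wedge\beta\wedge(\Omega\vert_\eta+\phi\,\d\beta)^n$. This is positive on $[0,T]\times M$ by the same compatibility you already used, with no domination argument at all.
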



\subsection{Strong symplectic foliations are symplectically semi-fillable}
\label{sec:strong_sympl_fol_are_sympl_semi-fillable}

Recall that a \textbf{symplectic foliation} $(\cF,\omega)$ is  a smooth regular foliation $\cF$ equipped with a leafwise $2$-form $\omega\in \Gamma(\Lambda^2\cF^*)$ whose restriction to each leaf is symplectic.
A symplectic foliation is moreover called \textbf{strong} if $\omega$ admits an extension to a $2$-form $\omega_M$ on $M$ such that $\d\omega_M(X,Y,Z) = 0$ for all triples of (local) vector fields on $M$ so that at least two among them are tangent to $\cF$.

If the codimension of $\cF$ is one, a symplectic foliation $(\cF,\omega)$ is in particular a c-symplectic confoliation.
Moreover, in this case strongness amounts to asking that the extension $\omega_M$ is closed as a $2$-form on $M$.

\medskip 

With these notions recalled, we can pass to the main aim of this section, which is to state and prove the following easy lemma, that directly generalizes the $3$-dimensional case described in \cite[Corollary 3.2.5]{EliashbergThurston_Confoliations}.

\begin{lem}
    \label{lem:strong_sympl_fol_are_quasi-strong_sympl_semi-fillable}
    Any codimension $1$ strong symplectic foliation $(\cF,\omega)$ on a closed manifold $M$ is strong symplectic semi-fillable.
    More precisely, there is a smoothly trivial symplectic manifold $(W,\Omega)$ with $W=[1,2]\times M$, that is a strong symplectic filling of 
    \[
        (\overline{M} , \overline{\cF},\omega) \sqcup( M , \cF,\omega) \, .
    \]
\end{lem}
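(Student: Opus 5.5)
We follow the $3$-dimensional construction of \cite[Proposition 3.2.2]{EliashbergThurston_Confoliations}, using the closed extension $\omega_M$ of $\omega$ granted by strongness. Since $M$ is closed and $\cF$ is coorientable of codimension one, we pick a defining $1$-form $\beta$ for $\cF$; integrability gives $\beta\wedge\d\beta=0$. On $W=[1,2]_r\times M$ we consider the $2$-form
\[
\Omega := \omega_M + \d(\phi(r)\,\beta) = \omega_M + \phi'(r)\,\d r\wedge\beta + \phi(r)\,\d\beta ,
\]
where $\phi\colon[1,2]\to\bR$ is a smooth function to be chosen, with $\phi'>0$. Then $\Omega$ is closed, because $\omega_M$ is closed (this is precisely the strongness assumption in codimension one) and the second summand is exact.

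The first key step is non-degeneracy. We compute $\Omega^{n+1}$ and, since $\d r$ appears at most once, we get
\[
\Omega^{n+1} = (n+1)\,\phi'(r)\,\d r\wedge\beta\wedge(\omega_M+\phi\,\d\beta)^n .
\]
Because $\beta\wedge\d\beta=0$, every term containing $\d\beta$ vanishes once wedged with $\beta$. We are therefore left with $(n+1)\phi'\,\d r\wedge\beta\wedge\omega_M^n$, and $\beta\wedge\omega_M^n$ is a volume form on $M$ since $\omega_M$ restricts to a symplectic form on $\cF$. This volume form is positive after fixing orientations compatibly with the coorientation. So any $\phi$ with $\phi'>0$ makes $\Omega$ symplectic, and the magnitude of $\phi$ plays no role. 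This is the main simplification with respect to the contact case.

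The second step is to check the boundary conditions. On $\{r\}\times M$ we have $\Omega\vert_{TM}=\omega_M+\phi(r)\,\d\beta$, which is exactly of the required strong form $\omega+f\d\beta$ provided $\phi(r)>0$. (We could alternatively rescale to absorb $f$ into the defining form as $f\beta$.) Thus we choose, for instance, $\phi(r)=r$. Then $\phi>0$ and $\phi'>0$ on $[1,2]$, and we compare the orientation conventions at the two ends. At $\{2\}\times M$, the outward normal $\partial_r$ is positively transverse, and $\Omega^{n+1}>0$ with respect to $\d r\wedge(\text{orientation of }M)$. This means the coorientation $\beta$ together with $\omega\vert_\cF$ induces the boundary orientation, so this end is the convex end $(M,\cF,\omega)$. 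At $\{1\}\times M$ the outward normal is $-\partial_r$, so the boundary orientation is reversed, giving $(\overline M,\overline\cF,\omega)$.

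The only point needing care, and the one we expect to be the main obstacle, is this orientation and coorientation bookkeeping. We must make sure that the component $\overline M$ counts as a convex filling boundary for the reversed structure $\overline{\cF}$, meaning $\cF$ with the opposite coorientation $-\beta$, and not as a concave end. With $-\beta$, the restricted form becomes $\omega_M+\phi\,\d\beta=\omega_M-\phi\,\d(-\beta)$, so there is a sign to reconcile. We would first try to resolve this by using on the collar near $r=1$ the function $\phi$ of the opposite sign, keeping $\phi'>0$: for example $\phi(r)=r-3/2$, which gives $\phi(1)<0$ and $\phi(2)>0$. Then at both ends the restriction is $\omega_M+|\phi|\,\d(\pm\beta)$ with the appropriate coorientation, and the non-degeneracy computation above is unaffected by the sign of $\phi$.

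Finally, the resulting filling is smoothly trivial and satisfies the strong (hence also quasi-strong and weak) filling conditions at both boundary components.
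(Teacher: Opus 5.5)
Your proposal is correct and is essentially the paper's construction. The paper also takes $W=[1,2]\times M$ with $\Omega=\d(t\beta)+\omega_M$, which is closed by strongness and symplectic because $\beta\wedge\d\beta=0$, and it treats the end $\{1\}\times M$ via the reparametrization $s=-t$ and the defining form $-\beta$ of $\overline{\cF}$.

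Your shift $\phi(r)=r-3/2$ handles the sign more cleanly than the paper does. With the paper's $\phi(t)=t$, the collar at $\{1\}\times M$ is the model $\d(s(-\beta))+\omega_M$ with $s=-1<0$, so the restriction there is $\omega_M-\d(-\beta)$, with a negative coefficient. This is harmless for a foliation, since $\d\beta$ vanishes on $\cF$ and the sign only amounts to changing the closed extension of $\omega$. Your choice instead gives coefficient $\tfrac12>0$ at both ends, exactly as the definition of strong filling requires.
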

Here, $\overline{M}$ denotes $M$ with opposite orientation, and $\overline{\cF}$ denotes the smooth foliation $\cF$ but intended as equipped with the opposite coorientation.
\begin{proof}
    Let $\beta$ be any defining $1$-form for $\cF$, and consider the piece of symplectization 
    \[
        (W:=[1,2]\times M, \Omega := \d(t\beta) + \omega_M ) \, .
    \]
    Then, $(W,\Omega)$ naturally has a strong confoliated type boundary at $\{2\}\times M$, with respect to $(\cF,\omega)$ intended as a c-symplectic confoliation.
    For the other boundary component, note that one has a natural orientation preserving isomorphism as follows:
    \begin{align*}
    ((-1-\epsilon,-1]_s\times \overline{M}, \overline{\cF}, \Omega':=\d(s(-\beta)) + \omega_M)  &\to ([1,1+\epsilon)_t\times \overline{M}, \cF, \Omega':=\d(t\beta) + \omega_M) \, , \\
        (s,p)& \mapsto (t=-s,p) \, .
    \end{align*}
    In other words, $(\overline{M},\overline{\cF},\omega)$ is the other strong confoliated type boundary component of $(W,\Omega)$, as desired.
\end{proof}


\section{A family of pseudo-holomorphic spheres}
\label{sec:single_sphere_family}

In this section, we construct the pseudo-holomorphic sphere family needed
for the proof of \Cref{thm:rank_char_fol_at_least_two}. The construction is
based on the distinguished \(\bS^2\)-factor in
\[
L\cong \bS^2\times X.
\]

Throughout the section, \((W,\Omega)\) denotes either the weak symplectic
semi-filling or the symplectic cobordism appearing in the assumptions 
\hypref{hyp:weak_filling} or \hypref{hyp:cobordism_from_hypertight_shs} of
\Cref{thm:rank_char_fol_at_least_two}.

In \Cref{sec:model_single_family}, we introduce a product local model near
\(L\), define the corresponding family of model spheres, and establish their
regularity and local uniqueness properties. In
\Cref{sec:moduli_single_family}, we extend the local almost complex structure
to \(W\), introduce the associated marked moduli space and its evaluation
map, and study its compactness properties and limit set.


\subsection{The local model and the model spheres near \(L\)}
\label{sec:model_single_family}

Recall from \Cref{thm:rank_char_fol_at_least_two} that $M$ is $(2n+1)$-dimensional and 
\(\cK_\eta\) admits a closed $2k$-dimensional leaf
\[
L = \bS^2\times X
\]
where \(X\) is a closed connected \((2k-2)\)-dimensional manifold. 
In particular, as explained after the statement of \Cref{thm:rank_char_fol_at_least_two},
the restriction of the symplectic form $\Omega$ of the filling or cobordism as in \hypref{hyp:weak_filling} or \hypref{hyp:cobordism_from_hypertight_shs} to \(L\) is
non-degenerate.

We denote by
\[
E:=\eta|_L/TL
\]
the quotient normal bundle of \(L\) inside \(\eta\). Since
\(TL=\cK_\eta|_L\) and $\mu$ is non-degenerate on $\cK_\eta$, 
$\mu$ on \(\eta\) descends to a
fiberwise symplectic form on \(E\). 
Thus \(E\) is a symplectic vector bundle
of rank \(2(n-k)\).  

According to the assumptions of \Cref{thm:rank_char_fol_at_least_two}, namely \Cref{item:rank_2k_near_L__thm_rank_at_least_two} and the fact that $L$ has trivial normal bundle in $M$, 
$E$ is trivial and using \Cref{lem:trivial_holonomy} (and the standard normal neighborhood for confoliated hypersurfaces in symplectic manifolds from \cite[Lemma 2.46]{CardonaGironella_ConfoliationsHighDim}) we can find
a neighborhood \(U_0\) of \(L\) in \(W\) of the form
\[
U_0=
(-\epsilon,0]_t
\times
(-\delta,\delta)_s
\times
B^{2(n-k)}_v
\times
\bS^2
\times
X,
\]
whose closure $\overline{U_0}$ in $W$ is diffeomorphic to $[-\epsilon,0]_t\times[-\delta,\delta]_s\times \overline{B}_v^{2(n-k)}\times \bS^2\times X$, and
so that
\[
M\cap \overline{U_0}=\{t=0\}, \quad \eta\vert_{M \cap \overline{U_0}}=\ker(\d s +\lambda_v) \,
\]
where $\lambda_v$ is the standard Liouville form on $B_v^{2(n-k)}$ and
\[
L\simeq
\{t=0,\ s=0,\ v=0\}
\subset U_0 \, .
\]
Moreover, along \(L\), the splitting
\[
TW|_L
=
\mathbb R\langle\partial_t\rangle
\oplus
\mathbb R\langle\partial_s\rangle
\oplus
\mathbb R^{2(n-k)}_v
\oplus
T\bS^2
\oplus
TX
\]
is compatible with \(\Omega\), the \(v\)-directions identify with the
symplectic normal bundle
\[
E=\eta|_L/TL,
\]
and, with respect to this splitting,
\[
\Omega|_{TW|_L}
=
dt\wedge ds
+
\omega_v
+
\omega_{\bS^2}
+
\omega_X.
\]
Here \(\omega_v\) is the standard symplectic form on
\(\bR^{2(n-k)}_v\), while \(\omega_{\bS^2}\) and \(\omega_X\) are
symplectic forms on \(\bS^2\) and \(X\), respectively.

\smallskip

We now choose \(J_0\) on \(\overline{U_0}\) to be the natural product almost complex structure
determined by the above split form of $\overline{U_0}$. 
More explicitly, we require
\[
J_0\partial_t=\partial_s,
\]
take the standard complex structure on the \(v\)-directions, and choose on the \(L\)-factor a product almost complex structure
\[
J_0|_{TL}=i_{\bS^2}\oplus J_X.
\]
Here \(i_{\bS^2}\) is the standard complex structure on
\(\bS^2\), and \(J_X\) is an \(\omega_X\)-tame almost complex
structure on \(X\).  In particular,
\[
J_0(T\bS^2)=T\bS^2,
\qquad
J_0(TX)=TX.
\]
Note that, up to shrinking $\delta$, $J_0$ is tamed by $\CS_{\eta,\mu}$. 
Moreover, up to shrinking $\epsilon$ (and after attaching a long cylindrical end as in \cite[Lemma 2.47]{CardonaGironella_ConfoliationsHighDim}), $J_0$ is also tamed by $\Omega$.

For each
\[
(t_0,s_0,v_0,x_0)
\in
[-\epsilon,0]
\times
[-\delta,\delta]
\times
\overline{B}^{2(n-k)}_v
\times
X\, ,
\]
we define
\[
u_{t_0,s_0,v_0,x_0}
\colon
\bS^2\to \overline{U_0}
\]
by
\[
u_{t_0,s_0,v_0,x_0}(p)
=
(t_0,s_0,v_0,p,x_0).
\]
We call these curves \emph{model spheres}.  By construction of \(J_0\), they
are \(J_0\)-holomorphic.

We denote by
\[
A\in H_2(W;\mathbb Z)
\]
the homology class represented by the model spheres.

The parameter space of the model family has dimension
\[
1+1+2(n-k)+(2k-2)=2n.
\]
Thus the unparametrized model family has dimension \(2n\).  After adding one
marked point, the corresponding marked model family has dimension
\[
2n+2=\dim W.
\]

\begin{lem}
\label{lem:model_spheres_regular_single_family}
Each model sphere \(u_{t_0,s_0,v_0,x_0}\) is Fredholm regular.  
\end{lem}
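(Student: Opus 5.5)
The plan is to use the product structure of $J_0$ on $\overline{U_0}$ to split the linearized Cauchy--Riemann operator $D_u$ along a model sphere $u=u_{t_0,s_0,v_0,x_0}$ into a direct sum of operators on line bundles and trivial bundles over $\bS^2$. Surjectivity of each summand then follows from the standard automatic regularity criterion for line bundles over the sphere with $c_1\geq -1$.

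Since $J_0$ is the product of $J_0\partial_t=\partial_s$, the standard structure on the $v$-ball, $i_{\bS^2}$ and $J_X$, and since these factors do not depend on the other coordinates, the pullback splits as
\[
u^*TW = \underline{\bC}_{(t,s)} \oplus \underline{\bC}^{\,n-k}_v \oplus T\bS^2 \oplus \underline{T_{x_0}X},
\]
and $D_u$ respects this splitting. The $(t,s)$ and $v$ summands are trivial holomorphic bundles with their standard $\bar\partial$ operators, because the corresponding parts of $J_0$ are constant. The $T\bS^2$ summand carries the standard $\bar\partial$ on $T\bS^2\cong\mathcal O(2)$.

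The $X$ summand requires a short check. The linearization of $\bar\partial_{J_X}$ at the constant map to $x_0$ has zeroth-order term $\tfrac12(\nabla_\xi J_X)\,\d u\circ j$, projected to $TX$. Because $u$ has zero $X$-component, $\d u$ has no $TX$-part. Because $J_0$ is a product, the derivative of $J_X$ in the $\bS^2$-directions vanishes, and with the product connection the operator preserves the splitting. So this summand is again the trivial bundle $\bS^2\times T_{x_0}X\cong\underline{\bC}^{k-1}$ with its standard $\bar\partial$.

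Each summand is therefore a real Cauchy--Riemann operator on a holomorphic line bundle over $\bS^2$ of Chern number $0$ or $2$. For the trivial summands, the cokernel is identified with $H^{0,1}(\bS^2,\mathcal O)=0$, by Serre duality or directly by $H^1(\bS^2,\mathcal O)=0$. For $\mathcal O(2)$, $H^1(\bS^2,\mathcal O(2))=0$. Hence $D_u$ is surjective.

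As a consistency check, the Fredholm index is $2n+2+2c_1(A)=2n+2+4$ for parametrized spheres, since $c_1(A)=2$. Subtracting $\dim\mathrm{PSL}(2,\bC)=6$ gives $2n$, which matches the parameter count of the model family.

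The main obstacle is justifying that $D_u$ really is diagonal with respect to the splitting, in particular that no cross term arises from the dependence of $J_X$ on $x$ or from the taming modifications near $M$. The model $\overline{U_0}$ is set up so that $J_0$ is an honest product of almost complex structures on the factors. I would therefore compute $D_u$ using a product (Levi-Civita of a product metric) connection, for which every mixed derivative of $J_0$ vanishes. If the cylindrical-end modification were to alter $J_0$, I would note that regularity is an open condition only near $u$, and that taming was achieved by shrinking $\epsilon$, not by changing $J_0$. So the product formula holds on all of $\overline{U_0}$.
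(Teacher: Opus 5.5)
Your proposal is correct and follows the paper's own proof. The paper likewise uses the product structure of $J_0$ on $\overline{U_0}$ to split $u^*TW\cong T\bS^2\oplus\bC\oplus\bC^{n-k}\oplus\bC^{k-1}$, and writes $D_u$ as the direct sum of the standard linearized operator for $\operatorname{id}_{\bS^2}$ and standard $\bar\partial$-operators on trivial bundles, each of which is surjective. Your extra verifications are details the paper leaves implicit: the zeroth-order term in the $X$-directions vanishes because the $X$-component of $u$ is constant, and the index count $2n+6-6=2n$ matches the parameter count of the model family.
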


\begin{proof}
Let
\[
u=u_{t_0,s_0,v_0,x_0}
\]
be a model sphere.  By the product form of the local model and by the choice
of \(J_0\), the pullback bundle splits as a complex vector bundle as
\[
u^*TW
\cong
T\bS^2
\oplus
\bC
\oplus
\bC^{\,n-k}
\oplus
\bC^{\,k-1}.
\]
Here \(T\bS^2\) is the tangent bundle to the image of the model sphere,
\(\bC\) corresponds to the \((t,s)\)-directions,
\(\bC^{\,n-k}\) corresponds to the normal directions of \(L\)
inside \(\eta\), and \(\bC^{\,k-1}\) corresponds to the
\(X\)-directions.

With respect to this splitting, the linearized Cauchy--Riemann operator
splits as
\[
D_u
=
D_{T\bS^2}
\oplus
\bar\partial_{\bC}
\oplus
\bar\partial_{\bC^{\,n-k}}
\oplus
\bar\partial_{\bC^{\,k-1}}.
\]
The first summand is the standard linearized Cauchy--Riemann operator for the
identity map of \(\bS^2\), and the remaining summands are the standard
Cauchy--Riemann operators on trivial complex vector bundles over \(\bS^2\).
All these operators are surjective.  Hence \(D_u\) is surjective, and \(u\) is Fredholm regular.
\end{proof}

We next have the following local uniqueness property for pseudo-holomorphic spheres that are entirely contained in the product neighborhood.

\begin{lem}
\label{lem:local_uniqueness_single_family}
Let
\[
u\colon\bS^2\longrightarrow \overline{U_0}
\]
be a \(J_0\)-holomorphic sphere whose image is entirely contained in \(\overline{U_0}\) and whose 
energy is at most $\Omega(A)$.
Then \(u\) is, up to
reparametrization, one of the model spheres.
\end{lem}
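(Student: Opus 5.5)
Since $J_0$ is a product almost complex structure on $\overline{U_0}$, the plan is to decompose $u$ into its components and exploit the split form of the symplectic data. Write
\[
u=(u_{ts},u_v,u_{\bS^2},u_X)\colon \bS^2\to \overline{U_0},
\]
with $u_{ts}$ valued in $[-\epsilon,0]\times[-\delta,\delta]\subset\bC$ (coordinate $t+is$), $u_v$ in $\overline{B}^{2(n-k)}_v\subset\bC^{n-k}$, $u_{\bS^2}$ in $\bS^2$ and $u_X$ in $X$. Each component is holomorphic for the corresponding factor of $J_0$. The first two components are maps from a closed Riemann surface into $\bC$ (resp.\ $\bC^{n-k}$) with the standard integrable structure, so they are holomorphic functions on $\bS^2$, hence constant by the maximum principle or Liouville. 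In particular $u_t\equiv t_0$, $u_s\equiv s_0$ and $u_v\equiv v_0$, so the image of $u$ lies in the slice $\{(t_0,s_0,v_0)\}\times\bS^2\times X$.

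The next step uses energy. Because $\Omega$ may not be exactly split away from $L$, I would use that energy depends only on the homology class of $u$ in $\overline{U_0}$, which retracts onto $L$, so that
\[
E(u)=\Omega([u])=\deg(u_{\bS^2})\,\omega_{\bS^2}(\bS^2)+\omega_X([u_X]).
\]
The second term vanishes: $u_X$ is a $J_X$-holomorphic sphere in $X$ and $\omega_X$ is symplectically aspherical. Since $J_X$ is $\omega_X$-tame, the energy of $u_X$ equals $\omega_X([u_X])=0$, so $u_X$ is constant, equal to some $x_0$. Then $u=u_{t_0,s_0,v_0,x_0}\circ u_{\bS^2}$, with $u_{\bS^2}\colon\bS^2\to\bS^2$ holomorphic for the standard structure, i.e.\ a rational map of degree $d\ge 0$, and $E(u)=d\,\Omega(A)$. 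The hypothesis $E(u)\le\Omega(A)$ forces $d\in\{0,1\}$.

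If $d=0$, $u$ is constant. This case is either excluded implicitly, since the statement concerns non-constant spheres, or we note that constants are degenerate and are not counted as curves. If $d=1$, $u_{\bS^2}$ is a Möbius transformation, so $u$ is the model sphere up to reparametrization.

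The main obstacle is the energy bookkeeping, because $\Omega$ need not equal the split form on all of $\overline{U_0}$ but only along $L$. I would bypass this as above: the energy of a $J_0$-holomorphic sphere is $\Omega([u])$ provided $J_0$ is $\Omega$-tame, which is guaranteed after the shrinking of $\epsilon$ described above. The class $[u]$ is computed homotopically in $\overline{U_0}\simeq L$, where $\Omega\vert_L=\omega_{\bS^2}+\omega_X$ up to the constant $f$. The vanishing of $u_X$'s energy uses only $\omega_X$ and $J_X$ on $X$, so it is independent of this issue.
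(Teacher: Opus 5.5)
Your proof is correct and follows the paper's argument. In both, the holomorphic projections to the $(t,s)$- and $v$-factors are constant, symplectic asphericity kills the $X$-component, and the energy bound forces the $\bS^2$-component to have degree one. Your explicit homotopy-invariance computation of the energy in $\overline{U_0}\simeq L$, and your treatment of the degree-zero (constant) case, are slightly more careful than the paper. The paper simply asserts that the energy is $d\cdot\Omega(A)$ and passes directly from $d\,\Omega(A)\le\Omega(A)$ to $d=1$.
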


\begin{proof}
Using the product coordinates, write
\[
u=(u_{\bC},u_v,u_{\bS^2},u_X).
\]

Since \(J_0\) is a product almost complex structure, the projection
\[
\pi_{\bC}\colon \overline{U_0}\longrightarrow
[-\epsilon,0]\times[-\delta,\delta]\subset\bC
\]
is holomorphic. Therefore
\[
u_{\bC}:=\pi_{\bC}\circ u
\]
is holomorphic and hence constant.

Likewise,
\[
u_v:=\pi_v\circ u
\colon
\bS^2\longrightarrow \overline{B}^{2(n-k)}_v\subset\bC^{\,n-k}
\]
is holomorphic and hence constant.

By the assumption in \Cref{thm:rank_char_fol_at_least_two} that $\omega_X$ is symplectically aspherical, 
the \(J_X\)-holomorphic map
\[
u_X\colon \bS^2\longrightarrow X
\]
is also constant. It follows that \(u\) is contained in a slice
\[
\{t=t_0,\ s=s_0,\ v=v_0\}
\times
\bS^2
\times \{x_0\}.
\]

The remaining component
\[
u_{\bS^2}\colon \bS^2\longrightarrow\bS^2
\]
is holomorphic and therefore has some non-negative degree \(d\). Since all other components of \(u\) are constant, the $\Omega$-energy of  \(u\) is \(d\cdot \Omega(A)\). On the other hand, \(u\) has at most energy \(\Omega(A)\) by assumption. Thus
\[
d\cdot \Omega(A)\leq \Omega(A).
\]
Using \(\Omega(A)>0\), gives \(d=1\). Hence \(u_{\bS^2}\) is a biholomorphism.

After reparametrizing the domain, we may assume
\[
u_{\bS^2}=\operatorname{id}_{\bS^2}.
\]
Therefore
\[
u(p)=(t_0,s_0,v_0,p,x_0)
\]
for some fixed \(x_0\in X\). Hence \(u\) is a model sphere.
\end{proof}


\subsection{The global moduli space and the evaluation map}
\label{sec:moduli_single_family}

We now extend the model almost complex structure \(J_0\) to all of \(W\)
and introduce the moduli space associated to the model spheres.

Consider an extension $J_1$ of \(J_0\), defined on $\overline{U_0}$,to $[-\epsilon,0]_t\times M$, namely
\[
J_1\in\End\bigl(TW|_{[-\epsilon,0]_t\times M}\bigr)
\]
such that
\[
J_1|_{\overline{U_0}}=J_0,
\]
on each slice $\{t\}\times M$ the restriction \(J_1|_\eta\) is cotamed by \(\eta\) 
and \(\Omega|_\eta\), and finally
\[
J_1((t+1)\partial_t)=R,
\]
where \(R\) is the geodesible vector field of a $1$-form  $\alpha$ defining $\eta$ that extends $\d s$ on $\overline{U_0}$ and $\omega:=\Omega\vert_M$.

Here, such a geodesible vector field exists for the following reason.
First, by assumption our c-symplectic confoliation is geodesible, so there is such a vector field $R_0$ for \emph{some} defining $1$-form $\alpha_0$.
Now, over $\overline{U_0}$ we have $\eta=\ker(\d s)$ by assumption, hence $\alpha_0 = f \d s$.
As $R_0\pitchfork \eta$, we can write it as $R_0=a \partial_s + X$ with $a\neq 0$ and $X\in \eta$.
An easy computation shows that the condition $\iota_{R_0}\d\alpha_0=0$ implies that $f$ can only depend on the $s$-coordinate.
Hence, we can perform a cutoff \emph{in the $s$-coordinate} to the model $\partial_s$ inside a smaller copy of $\overline{U_0}$, still containing the model curves near $L$ have been constructed; let $\alpha$ be the defining form for $\eta$ found in this way.
Now, an explicit computation shows that $R_0$ is in fact still geodesible for $\alpha$.
Hence, by the affine nature of the geodesibility condition (with respect to the vector field), we can interpolate from $R_0$ near $\partial\overline{U_0}$ to the standard $\partial_s$ on a smaller copy of $U_0$ still containing the model spheres near $L$; we call $R$ such global geodesible vector field for $\alpha$.
After this has been arranged, the extension of $J$ exists according to \Cref{prop:interpolation_tame_complex_structures} and \cite[Lemma 2.47]{CardonaGironella_ConfoliationsHighDim}.

The following is then standard (recall $\overline{U_0}\subset [-\epsilon,0]\times M$); see, for example,
\cite{McDuffSalamon_Jholomorphic} or
\cite{Wendl_LecturesHolomorphic}.

\begin{thm}
\label{thm:relative_transversality_single_family}
The set of almost complex structures $J$ on $W$ extending $J_1$ and satisfying the following condition is a residual subset of the set of all those extending $J_1$:
every
\(J\)-holomorphic sphere having an injective point in
\[
W\setminus\bigl([-\epsilon,0]\times M\bigr)
\]
is Fredholm regular.
\end{thm}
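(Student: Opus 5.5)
The plan is to follow the standard relative transversality argument (as in McDuff--Salamon, Section 3.2, or Wendl's lecture notes), adapted to the setting where $J$ is constrained on the closed collar $[-\epsilon,0]\times M$ and free on the open set $\mathcal{V}:=W\setminus([-\epsilon,0]\times M)$.

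We fix a large integer $\ell$ and work with the Banach manifold $\mathcal{J}^\ell$ of $C^\ell$ almost complex structures on $W$ that agree with $J_1$ on the collar and are tamed by $\Omega$. (Floer's $C_\varepsilon$-space is an alternative.) For each homology class $B\in H_2(W;\bZ)$ we consider the universal moduli space
\[
\mathcal{M}^*(B;\mathcal{J}^\ell) = \{(u,J)\mid J\in\mathcal{J}^\ell,\ u\colon \bS^2\to W \text{ is } J\text{-holomorphic in class } B \text{ with an injective point in } \mathcal{V}\} \, .
\]
This space is open inside the space of all pairs, since having an injective point in an open set is an open condition. The core step is to show that the universal linearized operator
\[
(\xi,Y)\mapsto D_u\xi + \tfrac12 Y(u)\circ \d u\circ j
\]
is surjective at each such pair, where $Y$ ranges over infinitesimal deformations of $J$ supported in $\mathcal{V}$. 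Suppose $\eta$ lies in the $L^2$-annihilator of the image. Then $D_u^*\eta=0$, so $\eta$ is smooth and satisfies unique continuation. Moreover, $\langle \eta, Y(u)\,\d u\, j\rangle_{L^2}=0$ for all admissible $Y$. Near an injective point $z_0$ with $u(z_0)\in\mathcal{V}$ and $\d u(z_0)\neq0$, the standard construction produces a $Y$ supported in a small ball around $u(z_0)$ inside $\mathcal{V}$. Injectivity ensures that $u^{-1}$ of this ball is a small disc around $z_0$, and on that disc $Y$ pairs positively with $\eta$ whenever $\eta(z_0)\neq0$. Hence $\eta$ vanishes near $z_0$, and therefore everywhere by unique continuation. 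Since the set of injective points is open and dense among points mapped into $\mathcal{V}$, such a $z_0$ exists.

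It follows that $\mathcal{M}^*(B;\mathcal{J}^\ell)$ is a separable $C^{\ell-k}$ Banach manifold. The projection $\mathcal{M}^*\to\mathcal{J}^\ell$ is Fredholm with the same index as $D_u$. By Sard--Smale, its regular values form a residual set $\mathcal{J}^\ell_{\mathrm{reg}}(B)$, and these are exactly the $J$ for which every curve in $\mathcal{M}^*(B;J)$ is Fredholm regular. Intersecting over the countably many classes $B$ preserves residuality.

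Finally, Taubes' argument passes from $C^\ell$ to $C^\infty$. We exhaust by the sets $\mathcal{J}_{\mathrm{reg},K}$ of structures for which curves satisfying $\|\d u\|_\infty\le K$, with some injective point $z$ such that $\mathrm{dist}(u(z),[-\epsilon,0]\times M)\geq 1/K$ and $|\d u(z)|\ge1/K$, are regular. Each such set is open, by Gromov compactness for bounded derivatives together with openness of surjectivity, and dense, by the $C^\ell$ result for all $\ell$. The countable intersection of these sets is the desired residual set.

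The main obstacle is the step controlling perturbations: the perturbation $Y$ must be supported away from the collar where $J$ is frozen, yet still "see" the whole curve. The hypothesis that the injective point lies in $\mathcal{V}$ is precisely what resolves this. The remaining care is to keep the perturbation within the tame (open) condition and to handle the Taubes exhaustion, both of which are routine.
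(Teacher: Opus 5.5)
Your route is the standard relative transversality argument that the paper defers to by citing McDuff--Salamon and Wendl: a universal moduli space with perturbations $Y$ supported in $W\setminus([-\epsilon,0]\times M)$, an annihilator argument with unique continuation at an injective point there, Sard--Smale for each class, then Taubes' trick. The surjectivity and Sard--Smale steps are fine. The step that fails as written is the openness of your sets $\mathcal{J}_{\mathrm{reg},K}$.

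You only ask for \emph{some injective point} $z$ with $|\d u(z)|\geq 1/K$ and $\mathrm{dist}(u(z),[-\epsilon,0]\times M)\geq 1/K$, and this is not preserved under the limits your openness argument takes. Suppose $J_\nu\to J$, and non-regular $J_\nu$-spheres $u_\nu$ with such points $z_\nu$ converge to $u$ (using the derivative bound), with $z_\nu\to z$. There may be points $w_\nu$ away from $z_\nu$ with $d(u_\nu(w_\nu),u_\nu(z_\nu))\to 0$. Then $z$ need not be an injective point of $u$, and $u$ may even be multiply covered. The limit is non-regular, but it is not among the curves whose regularity is guaranteed by $J\in\mathcal{J}_{\mathrm{reg},K}$, so no contradiction arises. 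The same problem affects openness of the $C^\ell$ analogues, which your density step needs.

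The standard repair, and the one McDuff--Salamon use, is a quantitative injectivity condition. Require a point $z$ with $\mathrm{dist}(u(z),[-\epsilon,0]\times M)\geq 1/K$ and
\[
\inf_{\zeta\neq z}\frac{d(u(z),u(\zeta))}{d(z,\zeta)}\geq \frac{1}{K} \, .
\]
This condition passes to uniform limits with $z_\nu\to z$. For the limit map it forces $u^{-1}(u(z))=\{z\}$ and $|\d u(z)v|\geq |v|/K$, so $z$ is an injective point of $u$ lying in $W\setminus([-\epsilon,0]\times M)$. Hence the non-regular limit contradicts $J\in\mathcal{J}_{\mathrm{reg},K}$. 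Conversely, every injective point satisfies the condition for some $K$: near $z$ use the first-order expansion, and away from $z$ use compactness. So the intersection over $K\in\bN$ is still exactly the set in the statement.

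A smaller point: at a fixed $\ell$, Sard--Smale only applies to classes $B$ whose index is small compared with $\ell$. So you cannot intersect over all $B$ at a single $\ell$. Instead, run the Taubes step separately for each class $B$, with $\ell$ depending on $B$, and intersect over $B$ and $K$ only at the end, in the space of smooth structures extending $J_1$.
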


From now on, we use a $J$ that is generic in the sense of \Cref{thm:relative_transversality_single_family}.

Consider now the (unparametrized) marked moduli space \(\cM_A^*\) consisting of equivalence
classes \([u,z]\) of pairs \((u,z)\), where
\[
u\colon(\bS^2,i)\longrightarrow ( W , J)
\]
is a simple \(J\)-holomorphic sphere homotopic to a reparametrization of an interior
model sphere
\[
u_{t_0,s_0,v_0,x_0},
\qquad
t_0<0,
\]
and \(z\in\bS^2\) is a marked point. Here, the equivalence relation is given
by
\[
(u,z)\sim(u',z')
\]
if there exists a biholomorphism
\[
\varphi\colon(\bS^2,i)\longrightarrow(\bS^2,i)
\]
such that
\[
z'=\varphi(z)
\qquad\text{and}\qquad
u'=u\circ\varphi^{-1}.
\]
Note that there is a natural evaluation map
\begin{equation}
\label{eqn:evaluation_map}
\ev_A\colon\cM_A^*\longrightarrow  W,
\qquad
[u,z]\longmapsto u(z).
\end{equation}

By \Cref{lem:model_spheres_regular_single_family} and the implicit function
theorem, near each marked model sphere, the marked moduli space
\(\cM_A^*\) is locally identified with
\[
\overline{U_0}=[-\epsilon,0]
\times
[-\delta,\delta]
\times
\overline{B}^{2(n-k)}_v
\times
X
\times
\bS^2 \, .
\]
Here, the first four factors parametrize the model
spheres, while the last factor corresponds to the marked point in the domain $\bS^2$.

With respect to this identification, the evaluation map is given by
\[
(t_0,s_0,v_0,x_0,p)
\longmapsto
(t_0,s_0,v_0,p,x_0).
\]
Thus, \(\ev_A\) is a local diffeomorphism near the interior model family.
In particular, if $\cM_A^*$ is smooth at each of its points (which will be argued in the next section), then its dimension would be
\[
\dim\cM_A^*=2n+2=\dim W
\]
near the interior model family.

We next strengthen the local uniqueness property established in
\Cref{lem:local_uniqueness_single_family}.

\begin{lem}
\label{lem:uniqueness_lemma}
There is an open neighborhood
\[
U_0'\Subset U_0
\]
of \(L\) such that every \(J\)-holomorphic sphere intersecting $\overline{U_0'}$ and having $\Omega$-energy at most $\Omega(A)$ must coincide, up to reparametrization, with one of the model spheres.
\end{lem}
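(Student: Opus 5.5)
The plan is to reduce to \Cref{lem:local_uniqueness_single_family} via a monotonicity argument combined with the weak maximum principle. Fix a smaller product neighborhood
\[
U_0' = (-\epsilon',0]_t\times(-\delta',\delta')_s\times B^{2(n-k)}_{r'}\times \bS^2\times X,
\qquad \epsilon'<\epsilon,\ \delta'<\delta,\ r'<1 ,
\]
where the parameters will be chosen small. Let $u$ be a $J$-holomorphic sphere meeting $\overline{U_0'}$ with $E_\Omega(u)\le\Omega(A)$. It suffices to show that $u$ stays inside $\overline{U_0}$; then \Cref{lem:local_uniqueness_single_family} applies directly, since $J=J_0$ on $\overline{U_0}$. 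So we argue by contradiction: assume $u$ meets both $\overline{U_0'}$ and $W\setminus\overline{U_0}$.

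First I handle the boundary directions not involving $M$. The functions $\pi_{\bC}$ (the $(t,s)$-coordinate) and $\pi_v$ are $J_0$-holomorphic on $\overline{U_0}$. Consider $\Sigma:=u^{-1}(\overline{U_0})$. On the shell $\overline{U_0}\setminus U_0'$, any component of $u\vert_\Sigma$ that crosses from $\partial U_0'$ to $\partial U_0$ projects to a holomorphic curve in $\bC\times\bC^{n-k}$. This projection passes from the small polydisk to the boundary of the large one, with boundary mapped to the outer boundary. Monotonicity for holomorphic curves in $\bC^{n-k+1}$ (equivalently, the area of a minimal surface through the center of an annular region) then gives
\[
\int_{\Sigma\cap(\overline{U_0}\setminus U_0')}u^*\Omega \;\ge\; c>0 ,
\]
where $c$ depends only on the gap between radii. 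Indeed, $\Omega$ restricted to $\overline{U_0}$ dominates $\d t\wedge\d s+\omega_v$ up to a positive tame error. We obtain the same estimate if the curve exits through the $s$- or $v$-boundary.

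Second, I handle the $t=0$ side. By the weak maximum principle (\Cref{lem:weak_maximum_principle}), a curve touching $M$ at an interior point lies in $M$ and is tangent to $\cK_\eta$. Near $L$, $\cK_\eta$ is the foliation by copies of $L$ (\Cref{lem:trivial_holonomy}), so such a curve lies in a single leaf $\{s_0\}\times\{v_0\}\times L$ and is therefore a model sphere. Otherwise $u$ is contained in the interior, where $t$ is the remaining coordinate handled above.

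Third, I compare energies. Once $u$ meets $\overline{U_0'}$, the $X$ and $\bS^2$ components cannot carry the energy deficit. The curve $u$ must also carry a nonzero class of energy at most $\Omega(A)$; the minimality $\Omega(A)\le$ the energy of any nonconstant sphere in $\overline{U_0}$ follows from asphericity of $X$ and from $A$ being primitive (the $\bS^2$ factor). Together with the first step, this gives a contradiction only if the curve is forced to spend energy near $\Omega(A)$ inside $U_0$ plus an extra $c$ in the shell. I intend to obtain this by Gromov compactness: take $\epsilon',\delta',r'\to0$ and a sequence $u_j$ violating the claim. The limit is a stable sphere of energy at most $\Omega(A)$ meeting $L$ and leaving $U_0$. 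Each component touching $L$ touches $M$ in an interior point, so by the maximum principle it lies in a leaf. Hence it is a model sphere contained in $L$, of energy at least $\Omega(A)$ (by asphericity and the degree argument). This leaves no energy for the component that exits $\overline{U_0}$, which is the contradiction. The compactness step, namely ensuring the limit cannot escape through the concave end under \hypref{hyp:cobordism_from_hypertight_shs} and that the weak boundary keeps curves inside $W$, is the main obstacle; I would address it using energy bounds together with the maximum principle at $M$.
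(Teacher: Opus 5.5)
Your proof is correct. Its decisive third step runs parallel to the paper's: you take a sequence violating the claim for shrinking neighborhoods of $L$ and apply Gromov compactness. The component through $L$ is then identified as a model sphere of energy exactly $\Omega(A)$, using the weak maximum principle at a point of $L\subset M$, asphericity of $X$, and the degree count. The energy bound then rules out any other nonconstant component.

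The difference is in how the contradiction is extracted. The paper takes the $u_m$ to be non-model spheres and shows that their Gromov limit is a single smooth model sphere lying on $M$. It then concludes, via Fredholm regularity (\Cref{lem:model_spheres_regular_single_family}) and the implicit function theorem, that nearby spheres are model spheres. This implicitly requires working in a slight extension of $W$ beyond $M$, since the limit sits at $t_0=0$, where the model family has boundary.

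You instead take the $u_j$ to leave $\overline{U_0}$. Any accumulation point of points of $u_j(\bS^2)\setminus\overline{U_0}$ lies both in $W\setminus U_0$ and on the limit image, which is contained in $L\subset U_0$. This contradiction lets you finish with \Cref{lem:local_uniqueness_single_family}. Your route actually carries out the reduction to containment in $U_0$ that the paper announces at the start of its proof, and it needs no transversality at all. It uses only the same convergence-of-images property the paper already invokes ``by continuity'' to find a component through $L$.

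Some side remarks:
\begin{itemize}
\item Your monotonicity step is never used, since the compactness argument yields the contradiction on its own. It can be dropped, together with the imprecise claim that $\Omega$ dominates $\d t\wedge\d s+\omega_v$ on $\overline{U_0}$.
\item The lower bound $\Omega(A)$ for nonconstant spheres in $\overline{U_0}$ comes from the degree argument, not from primitivity of $A$.
\item Under \hypref{hyp:cobordism_from_hypertight_shs}, compactness at the concave end does not come from the maximum principle at $M$. It comes from SFT compactness combined with the absence of contractible Reeb orbits, which prevents genus-zero curves from breaking there. The paper takes this for granted as well.
\end{itemize}
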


\begin{proof}
    Thanks to \Cref{lem:local_uniqueness_single_family}, it is enough to prove that there is an open subset $U_0'\Subset U_0$ such that every $J$-holomorphic sphere $u$ whose image intersects $\overline{U_0'}$ must in fact be entirely contained in $\overline U_0'$ (and hence in $U_0$).
    This can be proven with an argument completely analogous to the one in \cite[Pages 334 and 335]{MassotNiederkrugerWendl_WeakFillings}.

    First, we claim that the conclusion holds for all those $J$-holomorphic spheres intersecting 
    \[
    \{t=0\}\times \{s=0\}\times \{v=0\} \times L  
    \subset \overline{U_0}=[-\epsilon,0]_t\times [-\delta,\delta]_s\times B_v^{2(n-k)} \times L \, .
    \]
    Indeed, for any such $J$-holomorphic sphere $u\colon \bS^2\to W$, denoting $\Sigma:=u^{-1}(\overline{U_0})$, we know that the function $t\circ u\vert_{\Sigma}$ must have an (interior) maximum point $z_0\in \Sigma\subset \bS^2$.
    Hence by \Cref{lem:weak_maximum_principle} (applied to a slight cylindrical extension of $J$ on $(-\epsilon,\epsilon')\times M$ for a very small $\epsilon'>0$), the $t$-coordinate of $u$ must be constant equal to $0$ near $z_0$, and more precisely $u\vert_\Sigma$, and hence $u$ by unique continuation, must be tangent to the factor $L=\bS^2\times X$ of $\{t=0\}\times (-\delta,\delta)_s\times B_v^{2(n-k)} \times L$.
    Now, the fact that $X$ is symplectically aspherical tells us that the projection of $u$ to the $X$ factor of $L$ is constant.
    Lastly, an energy argument using the fact that $u$ is in the homology class $A$ tells that the former must in fact be a reparametrization of a model sphere, as claimed.

    We now prove the conclusion of the lemma by contradiction. 
    Namely, assume that, for each $m\in \bN$, there is a $J$-holomorphic sphere $u_m$, which is \emph{not} a model sphere but is homologous to one, and intersects $\{-t_m\}\times (-\delta_m,\delta_m)\times \{\vert v\vert < \rho_m\}\times L\subset \overline{U_0}$, where $t_m$, $\delta_m$ and $\rho_m$ are a sequence of positive real numbers converging to $0$ as $m$ goes to $+\infty$.
    Then, as the homology class of each of the $u_m$'s is the same as that of a model sphere, their energy is uniformly bounded, and so by Gromov's compactness theorem the $u_m$ converge to a nodal curve $u_\infty$, which by continuity must have a nodal component $u_\infty^0$ intersecting $\{t=0\}\times \{s=0\}\times \{v=0\}\times L$.
    
    According to the previous claim, $u_\infty^0$ must then be a reparametrization of a model sphere of the form $u_{t=0,s=0,v_0=0,x_0}$, that is tangent to $\{t=0,s=0,v=0\}\times L\subset \overline{U_0}$.
    But this means that also every other nodal component of $u_\infty$ that shares a node with $u_\infty^0$ must intersect $\{t=0,s=0,v=0\}\times L$, and hence must be a model sphere of the type $u_{t=0,s=0,v_0=0,x}$ for some $x\in X$.
    By induction on the nodal component, we hence deduce that each nodal component of $u_\infty$ must be a model sphere of such type.
    However, as the total energy is $\Omega(A)$, that is also the same as the energy of each nodal sphere, we must have that there is in fact a single nodal component, i.e.\ that $u_\infty$ is in fact a smooth curve in $\cM_A^*$.
    
    Now, as the virtual dimension of the moduli space at model spheres has dimension $2n$, and because $u_m$ are \emph{not} model spheres but converge to $u_\infty$ which is a (non-nodal) model sphere, this contradicts \Cref{lem:model_spheres_regular_single_family} which implies that an open neighborhood of $u_\infty$ in the moduli space of all $J$-holomorphic spheres in the homology class $A$ is given by model spheres by regularity of the latter and the implicit function theorem.
    This concludes the proof.
\end{proof}


\section{Obstruction to the existence of contact points}
\label{sec:proof_rank_char_fol_at_least_two}

We now prove \Cref{thm:rank_char_fol_at_least_two} using the marked moduli
space \(\cM_A^*\) and the evaluation map \(\ev_A\) introduced in
\Cref{sec:single_sphere_family}.
To this end, we first need to understand better in \Cref{sec:contact_point_at_boundary} the behavior of holomorphic curves near a potential contact point.
Afterwards, in \Cref{sec:evaluation_map_pseudocycle_near_arc} we prove that the evaluation map from \eqref{eqn:evaluation_map} is a pseudocycle near the image of a special arc in $W$.
Lastly, in \Cref{sec:obstruction_presence_contact_regions} we prove \Cref{thm:rank_char_fol_at_least_two} from the Introduction.


\subsection{Absence of $J$-holomorphic curves near a contact point}
\label{sec:contact_point_at_boundary}

\begin{lem}
\label{lem:no_spheres_near_contact_point}
Suppose that \(p\in M\) has a neighborhood \(O\subset M\) on which
\[
\rk\cK_\eta=0.
\]
Then there is a neighborhood \(V\subset W\) of \(p\), with
\[
V\cap M\subset O,
\]
such that no curve in \(\cM_A^*\) has image intersecting \(V\).
\end{lem}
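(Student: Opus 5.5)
The argument will go by contradiction, combining Gromov compactness with the weak maximum principle (\Cref{lem:weak_maximum_principle}), in the same spirit as the proof of \Cref{lem:uniqueness_lemma}. Suppose no such $V$ exists. Take a shrinking sequence of neighborhoods $V_m$ of $p$ in $W$ with $\bigcap_m \overline{V_m}=\{p\}$. We then get curves $[u_m,z_m]\in\cM_A^*$ with $u_m(z_m)\in V_m$, so $u_m(z_m)\to p$. All $u_m$ represent the class $A$, so their $\Omega$-energy is uniformly bounded by $\Omega(A)$. Since $J$ is fixed and of cylindrical/cotamed type near $M$ (and, under \hypref{hyp:cobordism_from_hypertight_shs}, no curve in the class $A$ can escape into the concave end, the SHS being hypertight), Gromov compactness gives a subsequence converging to a nodal (possibly multiply covered) sphere $u_\infty$ with total class $A$. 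Because evaluation at the marked point converges, some component $u_\infty^0$ passes through $p\in M$.

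The next step is to exploit that $M$ is the maximal level of the collar coordinate $t$. The component $u_\infty^0$ is a closed $J$-holomorphic sphere contained in $W$, and $t\circ u_\infty^0$, restricted to the preimage of the collar $[-\epsilon,0]\times M$, attains its maximum value $0$ at the interior point $z\in u_\infty^{0\,-1}(p)$. Here we use the tameness of $J$ along $M$, as in the extension $J_1$ chosen in \Cref{sec:moduli_single_family}, and pass to a slight cylindrical extension beyond $M$, exactly as in the proof of \Cref{lem:uniqueness_lemma}. \Cref{lem:weak_maximum_principle} then says that, near $z$, the image of $\d u_\infty^0$ lies in $\cK_\eta$. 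On $O$ we have $\cK_\eta=0$, so $\d u_\infty^0$ vanishes on the open set $(u_\infty^0)^{-1}(O)$, which is a neighborhood of $z$. By unique continuation, $u_\infty^0$ is constant.

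It remains to rule out a constant component through $p$. In Gromov limits, constant components are only allowed if they are stable, i.e.\ carry at least three special points. We handle this by induction along the nodal tree. A ghost component at $p$ has all its nodes at $p$, so each neighboring component also passes through $p$. By the argument above, each such neighbor is tangent to $\cK_\eta=0$ near the node and is therefore constant. Propagating through the connected tree, every component of $u_\infty$ is constant. This gives total energy $0\neq\Omega(A)>0$, contradicting energy conservation in the Gromov limit. This contradiction produces the desired $V$, and we may shrink $V$ so that $V\cap M\subset O$.

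The step I expect to be most delicate is justifying the maximum principle at the limit point $p\in\partial W$. Concretely, we must ensure that \Cref{lem:weak_maximum_principle} applies to a component touching the boundary from inside, via the collar extension and tameness near $O$. We also need compactness to hold with no energy lost to the concave end in the \hypref{hyp:cobordism_from_hypertight_shs} case. Both points should follow as in \Cref{lem:uniqueness_lemma}.
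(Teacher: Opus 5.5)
Your proposal is correct and follows essentially the same route as the paper. Gromov compactness gives a nodal limit through $p$, and then the weak maximum principle, together with $\cK_\eta|_O=0$ and unique continuation, forces every component through $p$ to be constant, which contradicts $\Omega(A)>0$. Your explicit propagation along the nodal tree simply spells out the paper's one-line assertion that some nonconstant component of the limit must pass through $p$.
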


Note that, up to shrinking the $\epsilon$ in the definition of $U_0$ in the previous sections (and changing the almost complex $J$ as a result), we can assume without loss of generality that $V$ is of the form $(-\epsilon,0]\times Op(p)$, where $Op(p)$ denotes an open neighborhood of $p$ in $M$, that $U_0=(-\epsilon,0]_t\times (-\delta,\delta)_s\times B_v^{2(n-k)}\times L$, and that $U_0$ is subset of a neighborhood of $M$ in $W$ of the form $(-\epsilon,0]_t\times M$ in which the function $t$ is weakly pluri-subharmonic with respect to the chosen $J$.
We will assume that this property holds in \Cref{sec:evaluation_map_pseudocycle_near_arc,sec:obstruction_presence_contact_regions} below.

\begin{proof}[Proof of \Cref{lem:no_spheres_near_contact_point}]
Suppose otherwise. Then there are sequences $[u_j,z_j]\in\cM_A^*$
and \(w_j\in\bS^2\) such that
\[
u_j(w_j)\longrightarrow p.
\]
By Gromov compactness, after passing to a subsequence, the curves converge
to a nodal curve representing \(A\) whose image contains \(p\).

Since
\[
\Omega(A)>0,
\]
the limiting nodal curve has a nonconstant component whose image contains
\(p\). By the weak maximum principle\footnote{Here, the usual maximum principle in the contact case can also be applied.}, this component is contained in \(M\)
and, over \(O\), tangent to \(\cK_\eta\). Since
\[
\cK_\eta|_O=0,
\]
the component is constant on an open neighborhood of a preimage of \(p\).
By unique continuation, it must be constant everywhere, which is a
contradiction. This concludes.
\end{proof}


\subsection{Pseudocycle property in the presence of a contact point}
\label{sec:evaluation_map_pseudocycle_near_arc}

Throughout this section, we assume that there is a point \(p\in M\) such that
\[
\rk(\cK_\eta)_p=0.
\]
By upper semi-continuity of the rank, there is then a neighborhood \(O\subset M\)
of \(p\) such that
\[
\rk\cK_\eta|_O=0.
\]
Let \(V\subset W\) be the neighborhood given by
\Cref{lem:no_spheres_near_contact_point}.

Fix a point \(p_0\in L\), and choose a smooth path
\[
\gamma\colon[0,1]\longrightarrow W
\]
such that
\[
\gamma(0)=p_0,
\qquad
\gamma(1)=p,
\qquad
\gamma((0,1))\subset W\setminus M.
\]
We may moreover arrange that \(\gamma\) agrees near \(p_0\) with a path tangent to the $t$-coordinate in
the neighborhood \(U_0\), and that
\[
[1-\epsilon,1]= \gamma^{-1}(\overline V)
\]
for some sufficiently small \(\epsilon>0\).

\bigskip

The first key property that we will need in the next section is that $\cM_A^*$ is a manifold near marked curves evaluating to $\gamma$.

\begin{lem}
    \label{lem:regularity_curves_intersecting_gamma}
    Every marked $J$-holomorphic map in $\cM_A^*$ that evaluates to $\gamma$ is regular.
\end{lem}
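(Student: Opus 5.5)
The plan is a case split according to where the marked curve $[u,z]\in\cM_A^*$ with $u(z)\in\gamma([0,1])$ lives, feeding each case into an already available regularity statement.

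\emph{Step 1 (endpoints).} Suppose $u(z)=\gamma(1)=p$. Then $u$ meets $V$, which \Cref{lem:no_spheres_near_contact_point} forbids, so this case is empty. More generally, every point of $\gamma([1-\epsilon,1])$ lies in $\overline V$; shrinking $V$ slightly if needed, no curve of $\cM_A^*$ evaluates there. Next suppose $u(z)$ lies in the initial segment of $\gamma$, where $\gamma$ is a $t$-line inside $U_0$. Shrinking the initial segment so that it lies in $\overline{U_0'}$, \Cref{lem:uniqueness_lemma} applies: $u$ has energy exactly $\Omega(A)$, so it is a reparametrized model sphere. Model spheres are regular by \Cref{lem:model_spheres_regular_single_family}.

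\emph{Step 2 (interior points).} It remains to treat $u(z)=\gamma(\tau)$ with $\tau$ in a compact subinterval of $(0,1)$, so $u(z)\in W\setminus M$. If $u(z)\notin[-\epsilon,0]\times M$, then $u$ is simple, and simple curves have injective points forming an open dense set. So $u$ has an injective point near $z$ inside the open set $W\setminus([-\epsilon,0]\times M)$, and \Cref{thm:relative_transversality_single_family} gives regularity.

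\emph{Step 3 (the collar), which I expect to be the main obstacle.} The hard case is a curve that meets $\gamma$ inside the collar $(-\epsilon,0)\times M$ and lies entirely in the collar, where $J=J_1$ is fixed and genericity is unavailable. I would handle it in two parts.

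\begin{itemize}
\item \emph{Choice of $\gamma$.} First, choose $\gamma$ to leave the collar immediately after its initial model segment and to re-enter only within $\overline V$. This is possible by shrinking the collar width $\epsilon$ beforehand, as in the remark after \Cref{lem:no_spheres_near_contact_point}.
\item \emph{Curves passing through the collar.} Any curve evaluating to $\gamma$ at an interior time then hits $W\setminus([-\epsilon,0]\times M)$, so Step 2 applies. Step 1 covers the curves evaluating to the remaining parts of $\gamma$.
\end{itemize}

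If that rerouting of $\gamma$ is not permitted, the fallback is different. Use the plurisubharmonic function $t$ and \Cref{lem:weak_maximum_principle}: a curve contained entirely in the collar attains its $t$-maximum at an interior point. It must therefore lie in a slice tangent to $\cK_{\eta}$. Curves in $\cK_\eta$ in class $A$ are then analyzed as in \Cref{lem:uniqueness_lemma}, and they are model spheres or lie in the boundary. Either way they do not meet the interior of $\gamma$, or they are regular.
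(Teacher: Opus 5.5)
\textbf{Gap: the rerouting in Step~3 does not reduce everything to Steps~1--2.} Any path from $p_0\in L\subset\{t=0\}$ that leaves the collar $[-\epsilon,0]\times M$ and returns to $p$ must cross the face $\{-\epsilon\}\times M$ twice. At both crossings $J=J_1$ is not generic, so Step~2 does not apply there.

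Near $p$, the crossing lies in $\overline V\setminus V$. As arranged, $V=(-\epsilon,0]\times Op(p)$ never contains $\{-\epsilon\}\times Op(p)$, so \Cref{lem:no_spheres_near_contact_point} says nothing at that point, and ``shrinking $V$'' only shrinks the curve-free set. You also cannot make $V$ reach below the collar by shrinking $\epsilon$ afterwards: $V$ comes from Gromov compactness for a fixed $J$, and enlarging the generic region changes $J$.

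Near $L$, \Cref{lem:uniqueness_lemma} only controls spheres meeting $\overline{U_0'}$, and $U_0'\Subset U_0$ stays away from $\{t=-\epsilon\}$. So between leaving $\overline{U_0'}$ and leaving the collar, $\gamma$ passes through collar points outside $\overline{U_0'}$. At best these lie in $\overline{U_0}\setminus\overline{U_0'}$, ending on its lower boundary, and neither of your steps covers them. These are exactly the ``$\{-\epsilon\}\times Op(p)$'' and ``lower boundary of $\overline{U_0}$'' cases that the paper's proof treats separately.

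\textbf{The fallback is too global.} The weak maximum principle is the right tool, but your conclusion from it is false in general. A sphere tangent to $\cK_\eta$ in an interior slice $\{t_0\}\times M$ away from $L$ need be neither a model sphere nor contained in $\partial W$. Since $J=J_1$ there, it also need not be regular.

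\textbf{How to close the gap.} Apply the argument only at the collar points of $\gamma$ lying in $\overline{U_0}\cup\overline V$.
\begin{enumerate}
\item Suppose $t(u(z))>-\epsilon$. Then the maximum of $t\circ u$ is attained where $t$ is weakly plurisubharmonic. \Cref{lem:weak_maximum_principle} plus a connectedness argument puts all of $u$ in the slice through $u(z)$, tangent to $\cK_\eta$.
\begin{itemize}
\item[(a)] Near $p$ this contradicts $\cK_\eta=0$.
\item[(b)] In $U_0$, $\d s$ and $\d v$ vanish on $\cK_\eta$ (the $L$-directions there). Hence $u$ stays in the closed leaf through $u(z)$ and is a model sphere by \Cref{lem:local_uniqueness_single_family}. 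This in fact controls all of $U_0$, not just $U_0'$.
\end{itemize}
\item Suppose $t(u(z))=-\epsilon$. Either $u$ enters $\{t<-\epsilon\}$, where it has an injective point and is regular by \Cref{thm:relative_transversality_single_family}. Or, by the same maximum argument, it lies in $\{-\epsilon\}\times M$ tangent to $\cK_\eta$, with the same two conclusions (a) and (b).
\end{enumerate}
Inserting this at the two crossings, together with your Steps~1--2 and the rerouting, gives a complete proof along the paper's lines.
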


\begin{proof}
    The conclusion holds by \Cref{thm:relative_transversality_single_family} for any marked curve $[u,z]\in \cM_A^*$ such that $u(z)\in \gamma$ belongs to a region where $J$ is generic, as all curves in $\cM_A^*$ are simple, due to their homology class being that of a model sphere which is known to be primitive in the filling by assumption.
    The only potentially problematic marked curves are then those passing through a point of $\gamma$ that belongs to a region where $J$ is not generic.
    By the paragraph after \Cref{lem:no_spheres_near_contact_point}, these points either belong to $\{-\epsilon\}\times Op(p) \subset [-\epsilon,0]\times M$ in the neighborhood of $M$ in $W$, or to $\overline{U_0}$.
    
    In the first situation, as we know that no $J$-curve intersects the neighborhood $V$ of $p$, the curve is either entirely contained in $\{-\epsilon\}\times Op(p)$ or it touches the latter tangentially and then enters further inside $W$.
    Both situations are in fact forbidden by the maximum principle in the contact setting, as $\{-\epsilon\}\times Op(p)$ is of strict $J$-convex type (up to shrinking $\epsilon$ in all the previous constructions).

    In the situation where $u(z)$ belongs to $\overline{U_0}$, we distinguish two situations. 
    First, in the case where $u(z)\in U_0$, one can restrict to $\Sigma:=u^{-1}(U_0)$ and get that the $t$-coordinate of $(-\epsilon,0]\times M$ (that contains $U_0$) composed with $u\vert_\Sigma$ has an interior maximum point, and hence by the weak maximum principle we conclude that $u$ is tangent to a $t$-slice; the fact that $u(z)\in \gamma$ in fact forces $u$ to be entirely contained in $U_0$ and hence to be a model sphere by \Cref{lem:local_uniqueness_single_family}, which is then regular by \Cref{lem:model_spheres_regular_single_family}.
    Second, in the case where $u(z)\in \gamma$ is in the ``lower boundary'' of $\overline{U_0}$, i.e.\ $(\overline{U_0}\setminus U_0)\cap\{-\epsilon\} \times M$, we again have several possibilities.

    First, $u$ cannot go in the interior $U_0$; indeed, if so the weak maximum principle would say that $u$ is tangent to the level set $\{t=-\epsilon\}$ by the same argument as above, which would then be a contradiction.
    The second possibility is that $u$ intersects $W\setminus [-\epsilon,0]\times M$; in this case we know that it will be regular as in this region $J$ is generic.
    The last possibility is that $u$ is entirely tangent to $\{-\epsilon\}\times M$; by the same computations as in the proof of \Cref{lem:weak_maximum_principle} from \cite{CardonaGironella_ConfoliationsHighDim}, we then get that $u$ must in fact be tangent to the characteristic distribution in that level set, i.e.\ to the factors $\bS^2\times X$ passing through $u(z)$.
    But this means that $u$ is entirely contained in $\overline{U_0}$, and is hence a model sphere by \Cref{lem:local_uniqueness_single_family}, and then regular by \Cref{lem:model_spheres_regular_single_family}.
    
    This concludes the proof.
\end{proof}

\bigskip

In order to prove \Cref{thm:rank_char_fol_at_least_two} in \Cref{sec:obstruction_presence_contact_regions} below, we will need an understanding of the ``dimension'' of the intersection of a small neighborhood of $\gamma$ with the
\textbf{limit set} \(\Omega_A\) of \(\ev_A\).
The latter is defined as follows:
\[
\Omega_A
:=
\bigcap_{K\subset\cM_A^*\text{ compact}}
\overline{\ev_A(\cM_A^*\setminus K)}.
\]
In other words, \(\Omega_A\) is the set of all limit points of sequences of
the form
\[
\ev_A([u_j,z_j]),
\]
where \([u_j,z_j]\) has no convergent subsequence in \(\cM_A^*\).

Note that, by Gromov's compactness theorem, \(\Omega_A\) is made of
evaluations at the marked points of all possible marked nodal curves arising
as Gromov limits of sequences in \(\cM_A^*\).

We set
\[
\Omega_A^\circ:=\Omega_A\cap(W\setminus M).
\]
As far as the ``dimension'' is concerned, for simplicity, we say that
\(\Omega_A^\circ\) has \emph{dimension at most \(d\)} if there is a
possibly disconnected smooth oriented manifold \(Y\) of dimension at most
\(d\) and a smooth map
\[
f\colon Y\longrightarrow W\setminus M
\]
such that
\[
\Omega_A^\circ\subset f(Y).
\]

We now prove that $\ev$ satisfies the ``pseudo-cycle property'' near the arc $\gamma$.

\begin{lem}
\label{lem:limit_set_single_family}
For \(J\) as in \Cref{thm:relative_transversality_single_family} (c.f.\ the paragraph after \Cref{lem:no_spheres_near_contact_point}), the evaluation map \(\ev_A\) has image with compact
closure, and there is a neighborhood $V_\gamma$ of $\gamma$ in $W$ such that the intersection $\Omega_A^\circ \cap V_\gamma$ 
has dimension at most
\[
\dim\cM_A^*-2.
\]
\end{lem}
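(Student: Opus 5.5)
The plan is to handle the two assertions separately, starting with compactness of the image. All curves in $\cM_A^*$ represent the fixed class $A$, so their $\Omega$-energy is uniformly bounded by $\Omega(A)$. Under \hypref{hyp:weak_filling}, $W$ is compact, so there is nothing to prove. Under \hypref{hyp:cobordism_from_hypertight_shs}, I would show that curves cannot escape into the concave end. By the standard SFT neck-stretching/compactness picture, a sequence of spheres leaving every compact set would produce a holomorphic building with a component in the symplectization of the stable Hamiltonian structure. Such a component is a plane asymptotic to a contractible Reeb orbit, and these are excluded by hypothesis. The spheres therefore stay in a compact subset, and $\ev_A$ has image with compact closure.

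For the dimension bound, I would describe $\Omega_A$ through Gromov compactness. A point of $\Omega_A^\circ$ is the evaluation, at a marked point, of a nodal stable map $u_\infty$ in class $A$ which is a Gromov limit of curves in $\cM_A^*$ and is not itself an element of $\cM_A^*$. I would first choose $V_\gamma$ so that $V_\gamma\cap\overline V$ and $V_\gamma\cap\overline{U_0'}$ are the only places where it meets non-generic regions. On $V$, \Cref{lem:no_spheres_near_contact_point} gives $\Omega_A\cap V=\emptyset$. Near $U_0'$, \Cref{lem:uniqueness_lemma} shows that every curve of energy $\le\Omega(A)$ meeting $\overline{U_0'}$ is a model sphere, and the model family is a compact smooth piece of $\cM_A^*$; so a sufficiently small neighbourhood of $p_0$ contains no limit points. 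Two possibilities remain for a limit point in $V_\gamma$ away from these regions: either $u_\infty$ is a single simple sphere, or it is genuinely nodal. A single simple sphere would lie in $\cM_A^*$, contradicting the definition of limit point, unless it leaves through $M$. Limits touching $M$ are handled by the weak maximum principle (\Cref{lem:weak_maximum_principle}): such a component lies in $M$, so it does not affect the bound on $\Omega_A^\circ$. Multiple covers cannot occur because $A$ is primitive.

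The main case, and the expected obstacle, is a nodal limit with at least two nonconstant components. Primitivity of $A$ forces each component to carry a class of positive $\Omega$-energy smaller than $\Omega(A)$. Any component lying entirely in the non-generic collar $[-\epsilon,0]\times M$ is, by the maximum principle, tangent to some $\cK_\eta$ of a level set. I would rule such components out, or confine them to $M$, using the choice of $V_\gamma$ together with \Cref{lem:uniqueness_lemma}. The remaining components have injective points in the generic region. Replacing each by its underlying simple curve, semi-positivity gives the standard dimension count of McDuff--Salamon: the space of simple stable maps with the given tree, marked point and node matching has dimension at most $\dim\cM_A^*-2$. The reason is that each node costs $2$ in real codimension, and semi-positivity guarantees $c_1\ge0$ on the underlying simple components whenever $c_1$ could go wrong. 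Evaluation at the marked point then gives the smooth map $f\colon Y\to W\setminus M$ required in the definition. I expect the delicate point to be checking that no component of the limit stays in the non-generic strip $\{-\epsilon\}\times M$ or inside $\overline{U_0}\setminus U_0'$ near $\gamma$. For this I would rely on the same strict $J$-convexity and weak maximum principle arguments used in the proof of \Cref{lem:regularity_curves_intersecting_gamma}.
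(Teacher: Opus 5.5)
Your outline follows the same route as the paper: Gromov limits, \Cref{lem:no_spheres_near_contact_point} near $p$, uniqueness near $L$, the weak maximum principle in the collar, passing to underlying simple curves, and the semi-positive dimension count. However, it is missing the step that makes the dimension count legitimate, namely propagating the weak maximum principle across nodes.

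\textbf{The main gap.} The count needs \emph{every} component of the nodal limit to be regular, not only the marked one or those near $\gamma$. A component lying in a slice $\{t_0\}\times M$ of the collar, or in $M$, can a priori sit anywhere, far from $\gamma$. Neither the choice of $V_\gamma$ nor \Cref{lem:uniqueness_lemma} says anything about such a component. So ``rule such components out, or confine them to $M$, using the choice of $V_\gamma$'' has no mechanism behind it. Likewise, ``such a component lies in $M$, so it does not affect the bound'' is not a valid reason: irregular components in $M$, attached through nodes to an interior marked component, could a priori enlarge the evaluation image.

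\textbf{The paper's fix} is the induction on nodal components. Since $t$ is weakly pluri-subharmonic on (a neighbourhood of) the collar, any component meeting $(-\epsilon,0]\times M$, or touching $\{-\epsilon\}\times M$ from below, lies in a single slice. A component sharing a node with it meets that slice, hence lies in it too. Inducting along the tree, the whole connected nodal curve, including the marked component, lies in one slice and is tangent to $\cK_\eta$ there. Only then does the choice of $V_\gamma$ bite, because the marked point lies in $V_\gamma\cap(\{t_0\}\times M)$:
\begin{itemize}
\item near $p$ this is excluded by \Cref{lem:no_spheres_near_contact_point} and the strict $J$-convexity of $\{-\epsilon\}\times Op(p)$;
\item near $p_0$, tangency to $\cK_\eta$ confines the marked component to a leaf $\{(t_0,s_0,v_0)\}\times L$. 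By \Cref{lem:local_uniqueness_single_family} it is then a model sphere of energy $\Omega(A)$, so there are no other components.
\end{itemize}
In particular, curves touching $M$ do not contribute because the entire curve lies in $M$, not merely one component.

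\textbf{Smaller points.}
\begin{itemize}
\item Your initial requirement that $V_\gamma$ meet the non-generic region only in $\overline V\cup\overline{U_0'}$ cannot be met. $\overline{U_0'}\subset U_0$ stays in $\{t>-\epsilon\}$, while $\gamma$ must cross the slab $\overline{U_0}\setminus U_0'$ of the collar. For that slab you need \Cref{lem:local_uniqueness_single_family} (curves contained in $\overline{U_0}$) after the leaf-confinement argument, not \Cref{lem:uniqueness_lemma}.
\item Each component of a genuinely nodal limit has energy $<\Omega(A)$ simply by positivity and additivity of energy. Primitivity of $A$ is what makes the curves of $\cM_A^*$ simple, not this.
\item Your SFT discussion for \hypref{hyp:cobordism_from_hypertight_shs} goes beyond the paper, which only invokes compactness of $W$ and Gromov compactness. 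As written it is too quick: a genus-zero building arising from closed spheres need not contain a plane in a symplectization level without a further combinatorial argument on the building. This is not where the main gap lies.
\end{itemize}
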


\begin{rmk}
\label{rmk:not_full_pseudocycle_single_family}
Technically speaking, \Cref{lem:limit_set_single_family} does not prove that
\(\ev_A\) is a \emph{pseudocycle} in the sense of
\cite[Page 10]{McDuffSalamon_Jholomorphic}. There, the whole limit set
\(\Omega_A\) is required to have dimension at most
\(\dim\cM_A^*-2\).

The issue in our setup is that nodal curves contained in the boundary \(M\)
are not necessarily regular. Hence, even under the semi-positivity
assumption, there is no guarantee that their evaluation images have
dimension at most \(\dim\cM_A^*-2\). 
However, for our purposes, it is
sufficient to control the subset \(\Omega_A^\circ\cap V_\gamma\).
\end{rmk}

\begin{proof}
The fact that the image has compact closure follows from the compactness of
\(W\), Gromov's compactness theorem using the global energy bound, and the
fact that the maximum principle in \Cref{lem:weak_maximum_principle} implies that a
\(J\)-holomorphic sphere touching \(M=\partial W\) must be tangent to \(M\).

\medskip

For the property about the dimension, we consider a small neighborhood $V_\gamma$ of $\gamma$ in $W$, that is allowed to be shrunk along the proof.
We first note that every point of \(\Omega_A^\circ\cap V_\gamma\) can only be obtained
as a limit of evaluations
\[
\ev_A([u_j,z_j])
\]
of marked curves
\[
[u_j,z_j]\in\cM_A^*
\]
whose images stay uniformly away from \(M\). 
This simply follows from \Cref{lem:no_spheres_near_contact_point}, provided that $V_\gamma$ is small enough near the endpoint $p$ of $\gamma$.

By this fact, the weak maximum principle from \Cref{lem:weak_maximum_principle} and Gromov's compactness theorem, \(\Omega_A^\circ\cap V_\gamma\) can then be
covered by evaluation images of marked nodal curves that are contained in $W$ (but might touch $M$).

A priori, the moduli spaces of such nodal curves might fail to have
codimension at least \(2\), for instance if some of their components are
multiply covered. However, as we are only concerned with images under the
evaluation map, by
\cite[Proposition 6.1.2]{McDuffSalamon_Jholomorphic}, we may replace the
evaluation images of non-simple nodal curves with those of simple nodal
curves. Thus, \(\Omega_A^\circ\cap V_\gamma\) is covered by evaluation images of \emph{simple}
bubbled spheres.
Note that while we know that at least one nodal component of the involved nodal spheres intersects $V_\gamma$, in general not all of them will do so.

The relevant simple components are regular. Indeed, components contained
in \(U_0\) are controlled by
\Cref{lem:local_uniqueness_single_family} and the regularity of the model
spheres. 

Every other simple component is regular by a similar argument to that in \Cref{lem:regularity_curves_intersecting_gamma}.
Namely, each (simple underlying curve of each) nodal component is either contained in $W\setminus [-\epsilon,0]\times M$, in which case it is regular by genericity of $J$, or intersects $[-\epsilon,0]\times M$.
In this second case, either it also intersects $(-\epsilon,0]\times M$ and then the weak maximum principle implies that such nodal component is tangent to a $t$-slice, and hence also all the nodal curve by induction on the nodal component; but as they are required to pass through a point in $V_\gamma$, by \Cref{lem:no_spheres_near_contact_point} this means that they should in fact all be contained in $\overline{U_0}$, in which case they are a (non-nodal) model sphere by \Cref{lem:local_uniqueness_single_family} and regular by \Cref{lem:model_spheres_regular_single_family}.
The last possible situation is the one where the nodal component of interest (or better its underlying simple curve) is entirely contained in $\{-\epsilon\}\times M$; in this case, an induction on the nodal components and the same computation as in the proof of \Cref{lem:weak_maximum_principle} from \cite{CardonaGironella_ConfoliationsHighDim} allows to conclude that the whole nodal curve must be contained in $V_\gamma\cap \{-\epsilon\}\times M$ and hence in $\overline{U_0}$ by \Cref{lem:no_spheres_near_contact_point}, in which case the curve must again be a (non-nodal) model sphere by \Cref{lem:local_uniqueness_single_family} and regular by \Cref{lem:model_spheres_regular_single_family}.

At this point, the standard dimension count for genus-zero nodal curves
applies. More precisely, by the semi-positivity of \(W\), the genericity of
\(J\) away from \(M\cup\overline{U_0}\), and the above regularity
statements, the moduli spaces of once-marked simple nodal curves whose
evaluation images cover \(\Omega_A^\circ\) have connected components of
dimension at most
\[
\dim\cM_A^*-2.
\]
See \cite[Chapter 6]{McDuffSalamon_Jholomorphic}; more specifically, a
slight adaptation of the argument in
\cite[Theorem 6.6.1]{McDuffSalamon_Jholomorphic} gives the required
dimension estimate.

Since
\[
\dim\cM_A^*=\dim W,
\]
this proves the claim.
\end{proof}


\subsection{Proof of the obstruction}
\label{sec:obstruction_presence_contact_regions}

We assume by contradiction that
\Cref{thm:rank_char_fol_at_least_two} is false, i.e.\ that there is a point $p\in M$ such that $\cK_\eta$ has rank $0$ at $p$.
We are then in the assumption of the previous subsection, and can utilize its notations and conclusions.

By choice of $V$, the uniqueness property in \Cref{lem:uniqueness_lemma} and the fact that
\[
\dim(\Omega_A^\circ\cap V_\gamma)\leq\dim W-2,
\]
we may perturb \(\gamma\), relative to neighborhoods of its endpoints and inside $V_\gamma$, so
that
\[
\gamma((0,1))\cap\Omega_A^\circ=\emptyset
\]
and \(\ev_A\) is transverse to \(\gamma((0,1))\).
By \Cref{lem:regularity_curves_intersecting_gamma} and the fact that the resulting evaluation map is then smooth, it follows that
\[
\cM_{A,\gamma}^*
:=
\ev_A^{-1}\bigl(\gamma((0,1))\bigr)
\]
is a smooth \(1\)-dimensional manifold.

Because \(\gamma((0,1))\) avoids \(\Omega_A^\circ\), no sequence in
\(\cM_{A,\gamma}^*\) can degenerate to a nodal curve whose marked point lies
in \(W\setminus M\). Moreover, by
\Cref{lem:no_spheres_near_contact_point}, no curve in \(\cM_A^*\) meets
\(\gamma([1-\epsilon,1])\).

Near \(p_0\), again the uniqueness property in \Cref{lem:uniqueness_lemma} shows that
\(\cM_{A,\gamma}^*\) has exactly one end. Its limit is the unique marked
model sphere through \(p_0\). 

Consequently, \(\cM_{A,\gamma}^*\) can be compactified by adding exactly
one boundary point, corresponding to the marked model sphere through
\(p_0\). The resulting space is a compact \(1\)-dimensional manifold with
exactly one boundary point.

This is impossible, since every compact \(1\)-dimensional manifold has an
even number of boundary points. We have therefore obtained a contradiction.

It follows that
\[
\rk(\cK_\eta)_p\geq2
\]
for every \(p\in M\), as desired.
\hfill
\qedsymbol


\section{Two families of pseudo-holomorphic spheres}
\label{sec:pseudo-hol_spheres_semi-filling}

We now construct the two families of pseudo-holomorphic spheres needed for
the proof of \Cref{thm:rank_char_fol_at_least_four}. The construction is
the two-family analogue of \Cref{sec:single_sphere_family}, so we only
describe the points where the presence of two distinguished sphere factors
requires a modification.

Throughout this section, \((W,\Omega)\) denotes the symplectic manifold
appearing in one of Hypotheses
\hypref{hyp:weak_filling} and
\hypref{hyp:cobordism_from_hypertight_shs}.


\subsection{The two model families}
\label{sec:model_near_leaf}

Let \(L\cong\bS^2\times\bS^2\times X\) be the submanifold appearing in
\Cref{thm:rank_char_fol_at_least_four}, where \(\dim L=2k\) and
\(\dim X=2k-4\).

For the argument below, we use the analogue of
the model neighborhood of $L$ in the previous section, with the difference that here
\(L=\bS^2\times\bS^2\times X\). Thus, we have a product neighborhood
\[
U_0=
(-\epsilon,0]_t\times(-\delta,\delta)_s
\times B^{2(n-k)}_v\times\bS^2\times\bS^2\times X
\]
with
\[
\overline{U_0}=
[-\epsilon,0]_t\times[-\delta,\delta]_s
\times \overline{B}^{2(n-k)}_v\times\bS^2\times\bS^2\times X
\]
and a product almost complex structure \(J_0\) on $\overline{U_0}$ satisfying
\(J_0\partial_t=\partial_s\) and
\[
J_0|_{TL}=i_{\bS^2}\oplus i_{\bS^2}\oplus J_X.
\]
We also know that there is no non-constant \(J_X\)-holomorphic sphere in $X$, by the assumption that $\omega_X$ is symplectically aspherical.

Let \(A_i\in H_2(W;\mathbb Z)\) denote the class represented by the
\(i\)-th sphere factor, for \(i=1,2\). The two families of model spheres
are
\begin{align*}
u^1_{t_0,s_0,v_0,q_0,x_0}(p)
&=(t_0,s_0,v_0,p,q_0,x_0),\\
u^2_{t_0,s_0,v_0,q_0,x_0}(p)
&=(t_0,s_0,v_0,q_0,p,x_0).
\end{align*}
Here \(t_0\in[-\epsilon,0]\), \(s_0\in[-\delta,\delta]\),
\(v_0\in \overline B^{2(n-k)}_v\), \(q_0\in\bS^2\), and \(x_0\in X\).

\begin{lem}
\label{lem:regularity_near_L}
Every sphere in either model family is Fredholm regular. 
\end{lem}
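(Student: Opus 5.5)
The plan is to mimic the proof of \Cref{lem:model_spheres_regular_single_family}, using the product structure of $J_0$ on $\overline{U_0}$. By the symmetry exchanging the two $\bS^2$-factors, which preserves $J_0$, it suffices to treat the first family. So fix $u=u^1_{t_0,s_0,v_0,q_0,x_0}$. Because $J_0$ is a product almost complex structure and all components of $u$ other than the first sphere factor are constant, the pullback bundle splits as a direct sum of complex vector bundles:
\[
u^*TW\cong T\bS^2\oplus \bC\oplus\bC^{\,n-k}\oplus T_{q_0}\bS^2\oplus T_{x_0}X
\cong T\bS^2\oplus\bC\oplus\bC^{\,n-k}\oplus\bC\oplus\bC^{\,k-2}.
\]
Here the second summand corresponds to the $(t,s)$-directions, the third to the $v$-directions, the fourth to the tangent plane of the second sphere factor at the fixed point $q_0$, and the last to $T_{x_0}X$ with the constant complex structure $J_X(x_0)$.

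Next I would check that the linearized operator $D_u$ respects this splitting. The key point is that $J_0$ is a product, so $\nabla J_0$ (computed with a product connection) has no mixed terms. On the summands along which $u$ is constant, the zeroth-order term of $D_u$ involves only derivatives of $J_0$ in the directions of the second sphere factor, of $X$, of $v$, or of $(t,s)$. However, $J_0$ restricted to each factor depends only on the coordinates of that factor, and $\d u$ has no component in those factors. Hence the zeroth-order term vanishes there, and on these trivial summands $D_u$ reduces to the standard $\bar\partial$ operator.

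On the $T\bS^2$ summand, $D_u$ is the linearization at the identity map of $(\bS^2,i)$. This operator is surjective, either because its cokernel is $H^{0,1}$ with values in $T\bS^2=\mathcal O(2)$, which vanishes, or by automatic transversality in dimension four (Hofer--Lizan--Sikorav), since $c_1=2\geq -1$. On each trivial summand, $\bar\partial$ on $\mathcal O(0)$ over $\bS^2$ has cokernel $H^1(\bS^2,\mathcal O)=0$, so it is surjective. A direct sum of surjective operators is surjective, so $u$ is Fredholm regular. As a consistency check, the index is $2n+2k\cdot 0+6$ in real terms, which matches the dimension of the parameter space plus $\dim\mathrm{Aut}(\bS^2)$.

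The only step needing some care is the claim that the zeroth-order term of $D_u$ does not couple the summands, particularly in the $X$-direction, where $J_X$ need not be integrable. I would handle this by using the formula $D_u\xi=\nabla\xi+J\nabla_\xi J\,\d u\circ j$ with a product torsion-free connection, together with the observation that $\d u$ takes values only in the first sphere factor. Then $\nabla_\xi J_0$ applied to vectors in $T\bS^2_{(1)}$ depends only on the component of $\xi$ along the first factor. The $X$-block is therefore exactly $\bar\partial$ on the trivial bundle $T_{x_0}X$.
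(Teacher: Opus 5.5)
Your proposal is correct and takes the same route as the paper: you split $u^*TW$ into $T\bS^2$ plus trivial complex summands (the $(t,s)$-, $v$-, second-sphere and $X$-directions), note that $D_u$ respects this splitting, and conclude from the surjectivity of each summand. Your check, using a product torsion-free connection, that the zeroth-order term does not couple the summands (including the possibly non-integrable $X$-block) supplies a detail the paper only asserts, and your index count $2(n+1)+2c_1(A)=2n+6$ is consistent with the kernel you describe.
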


\begin{proof}
The proof is the same as that of
\Cref{lem:model_spheres_regular_single_family}. For a sphere in either
family, its pullback tangent bundle splits as the tangent bundle of the
nonconstant sphere factor together with trivial complex summands
corresponding to the \((t,s)\)-directions, the \(v\)-directions, the
other sphere factor, and the \(X\)-directions. The linearized
Cauchy--Riemann operator splits accordingly, and each summand is
surjective. Hence every model sphere is Fredholm regular.
\end{proof}

\begin{lem}
\label{lem:uniqueness_near_L}
Let \(u\colon\bS^2\to U_0\) be a \(J_0\)-holomorphic sphere representing
\(A_i\). Then \(u\) is, up to reparametrization, a sphere in the \(i\)-th model family.
\end{lem}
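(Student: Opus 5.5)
The plan mirrors the proof of \Cref{lem:local_uniqueness_single_family}. Write $u=(u_{\bC},u_v,u_1,u_2,u_X)$ using the product coordinates of $U_0$. Since $J_0$ is a product almost complex structure, each projection is $J_0$-holomorphic onto its factor.

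First, $u_{\bC}$ takes values in $[-\epsilon,0]\times[-\delta,\delta]\subset\bC$ and $u_v$ takes values in the ball $\overline B^{2(n-k)}_v\subset\bC^{n-k}$. Both are holomorphic maps from the closed Riemann surface $\bS^2$ into open subsets of complex vector spaces, so they are constant by the maximum principle or Liouville. Next, $u_X$ is a $J_X$-holomorphic sphere in $X$ with $J_X$ tamed by $\omega_X$. Its energy is $\omega_X([u_X])$, which vanishes by symplectic asphericity, so $u_X$ is constant.

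The remaining components $u_1,u_2\colon\bS^2\to\bS^2$ are holomorphic for $i_{\bS^2}$, hence have degrees $d_1,d_2\geq 0$. Since all other components are constant, $[u]=d_1A_1+d_2A_2$ in $H_2(W;\bZ)$. The key step is to deduce $(d_1,d_2)=(1,0)$ when $[u]=A_1$, and symmetrically for $A_2$. This is where I expect the only subtlety.

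My first attempt uses energy. Because $\Omega$ restricted to $L$ splits as $\omega_{\bS^2}+\omega_{\bS^2}+\omega_X$ (as noted after \Cref{thm:rank_char_fol_at_least_two}, with constant conformal factor), we get $\Omega(A_1)=\Omega(A_2)=a>0$. Then $[u]=A_i$ gives $(d_1+d_2)a=a$, so $d_1+d_2=1$, leaving $(1,0)$ or $(0,1)$. To rule out the wrong one, I use Hypothesis~\ref{item:homology_classes__thm_rank_at_least_four}: $A_1\neq A_2$ in $H_2(W;\bZ)$. So $[u]=A_1$ forces $(d_1,d_2)=(1,0)$, since $(0,1)$ would give $[u]=A_2\neq A_1$.

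Finally, $d_i=1$ means $u_i$ is a biholomorphism, and the other sphere component is constant of degree $0$. After reparametrizing the domain, $u_i=\mathrm{id}$, so $u$ equals $u^i_{t_0,s_0,v_0,q_0,x_0}$ for the constant values of the other components. I would also note that the same argument works for $u$ with image in $\overline{U_0}$, which is the form needed in the analogue of \Cref{lem:uniqueness_lemma}.
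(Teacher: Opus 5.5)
Your proposal is correct and follows essentially the same route as the paper: the $(t,s)$-, $v$- and $X$-projections are constant, then $d_1+d_2=1$ follows from $\Omega(A_1)=\Omega(A_2)$, and finally the hypothesis $A_1\neq A_2$ excludes the wrong sphere factor. Your argument uses only the equality of the two areas. The paper also mentions that this common area is minimal, but that property plays no role at this step.
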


\begin{proof}
As in the proof of
\Cref{lem:local_uniqueness_single_family}, the projections of \(u\) to the
\((t,s)\)- and \(v\)-factors are constant. Its projection to \(X\) is
constant by the fact that $\omega_X$ is symplectically aspherical.

Let \(d_1,d_2\geq0\) be the degrees of the projections of \(u\) to the two
sphere factors. 
By
\Cref{item:splitting_mu__thm_rank_at_least_four}, the two sphere factors have the
same minimal positive energy, which we denote by \(E_0\). Since \(u\)
represents \(A_i\), its energy is \(E_0\), whereas the product form of
\(\Omega\) gives
\[
\Omega([u])=(d_1+d_2)E_0.
\]
It follows that \(d_1+d_2=1\). Thus one sphere projection is a
biholomorphism and the other is constant.

If \(u\) represents \(A_1\), the nonconstant projection must be the first
one. Otherwise \(u\) would represent \(A_2\), contrary to
\Cref{item:homology_classes__thm_rank_at_least_four}. Therefore \(u\)
belongs, up to reparametrization, to the first model family. The case of
\(A_2\) is identical.
\end{proof}


\subsection{The two moduli spaces}
\label{sec:mod_space_spheres}

Following
\Cref{sec:moduli_single_family}, we extend \(J_0\) on $\overline{U_0}=[-\epsilon,0]\times[-\delta,\delta]\times \overline{B}^{2(n-k)}_v \times L$ to an almost complex
structure \(J\) on \(W\) which has the required cotaming properties on each $t$-slice in the collar neighborhood $[-\epsilon,0]_t\times M$, agrees with \(J_0\) on \(\overline{U_0}\), and is generic away from
$[-\epsilon,0]_t\times M$ (note that $\overline{U_0}=[-\epsilon,0]\times[-\delta,\delta]\times \overline{B}^{2(n-k)}_v\times L\subset [-\epsilon,0]_t\times M$ by construction).

Since only the two classes \(A_1\) and \(A_2\) are involved, \(J\) may be
chosen so that the relative generic regularity statement as in \Cref{thm:relative_transversality_single_family} holds simultaneously
for both classes.
Namely, we have the following.

\begin{thm}
\label{thm:regularity_away_from_L}
For \(i=1,2\), every simple \(J\)-holomorphic sphere representing \(A_i\)
and having an injective point in
\(W\setminus([-\epsilon,0]\times M)\) is Fredholm regular.
\end{thm}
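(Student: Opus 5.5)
The plan is to reduce \Cref{thm:regularity_away_from_L} to the standard relative transversality theorem, exactly as for \Cref{thm:relative_transversality_single_family}, and then to intersect two residual sets.

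Fix the extension $J_1$ of $J_0$ to the collar $[-\epsilon,0]_t\times M$ constructed as in \Cref{sec:moduli_single_family}, i.e.\ cotamed on each $t$-slice and mapping $(t+1)\partial_t$ to the geodesible vector field $R$. Let $\mathcal J$ be the space of $\Omega$-tame almost complex structures on $W$ that agree with $J_1$ on $[-\epsilon,0]\times M$, with the $C^\infty_\varepsilon$ (Floer) topology. This is a Baire space, and it is nonempty by the extension argument given before \Cref{thm:relative_transversality_single_family}.

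For each fixed class $A\in H_2(W;\bZ)$, consider the universal moduli space of pairs $(u,J)$ with $J\in\mathcal J$ and $u$ a simple $J$-holomorphic sphere in class $A$ having an injective point in the open set $W\setminus([-\epsilon,0]\times M)$. The key step is to show that the universal linearized operator $(\xi,Y)\mapsto D_u\xi+\tfrac12 Y\circ \d u\circ j$ is surjective. Here $Y$ ranges over infinitesimal deformations of $J$ supported in $W\setminus([-\epsilon,0]\times M)$. This is the usual argument of \cite[Proposition 3.2.1]{McDuffSalamon_Jholomorphic}, localized near an injective point. Suppose an element $\eta$ of the cokernel annihilates the image. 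Since the set of injective points is open and dense in $u^{-1}$ of the open region, one can choose $Y$ concentrated near $u(z_0)$ for an injective point $z_0$ in that region. This forces $\eta$ to vanish on an open set, and then unique continuation for the formal adjoint $D_u^*$ gives $\eta\equiv 0$.

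The universal moduli space is then a Banach manifold, and the projection to $\mathcal J$ is Fredholm. Sard--Smale therefore yields a residual set $\mathcal J_{\mathrm{reg}}(A)\subset\mathcal J$ of structures for which every such curve is Fredholm regular. This is precisely \Cref{thm:relative_transversality_single_family} for the class $A$ (see also \cite{Wendl_LecturesHolomorphic}).

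Finally, take $J\in\mathcal J_{\mathrm{reg}}(A_1)\cap\mathcal J_{\mathrm{reg}}(A_2)$. This intersection is again residual, hence nonempty and in fact dense, since a countable intersection of residual sets in a Baire space is residual. Every $J$ in it satisfies the statement for both $i=1,2$ simultaneously. It still agrees with $J_0$ on $\overline{U_0}$ and keeps the cotaming properties on the collar, because these are built into $\mathcal J$.

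The only delicate point is the surjectivity step. The perturbations $Y$ must vanish on the collar, so the argument genuinely needs an injective point outside it, which is exactly the hypothesis. One must also check that the perturbed structures stay $\Omega$-tame, which holds for small $Y$ since tameness is an open condition.
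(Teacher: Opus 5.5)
Your proposal is correct and is essentially the paper's argument: the paper invokes the standard relative transversality theorem (as in \Cref{thm:relative_transversality_single_family}, citing McDuff--Salamon and Wendl) separately for the classes $A_1$ and $A_2$, and takes $J$ in the intersection of the two resulting residual sets. Your sketch of surjectivity for the universal linearized operator (perturbations supported near an injective point outside the collar, then unique continuation) followed by Sard--Smale spells out the standard proof that the paper leaves to those references.
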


For \(i=1,2\), let \(\cM_*^i\) denote the marked moduli space of
simple \(J\)-holomorphic spheres in the class \(A_i\), defined in the same way as
\(\cM_A^*\) in \Cref{sec:moduli_single_family}. We denote its evaluation
map by
\[
\ev_i\colon\cM_*^i\longrightarrow W,
\qquad
\ev_i([u,z])=u(z).
\]

We now describe the two marked moduli spaces near their model families.
For either family, the kernel of the linearized Cauchy--Riemann operator,
after quotienting by infinitesimal reparametrizations, consists precisely
of variations of \(t_0,s_0,v_0,q_0\), and \(x_0\). These parameters form
a space of dimension
\[
2+2(n-k)+2+(2k-4)=2n.
\]
Thus, by \Cref{lem:regularity_near_L} and the implicit function theorem,
near each marked interior model sphere the corresponding marked moduli
space is locally identified with
\[
(-\epsilon,0)\times(-\delta,\delta)\times
B^{2(n-k)}_v\times\bS^2\times X\times\bS^2,
\]
after restricting the parameters so that the model spheres are contained
in \(U_0\). The last factor corresponds to the marked point.

Under this identification, \(\ev_i\) evaluates the marked model sphere
at its marked point. Consequently, for \(i=1,2\),
\[
\dim\cM_*^i=2n+2=\dim W,
\]
and \(\ev_i\) is a local diffeomorphism near the corresponding interior
model family.

As in \Cref{sec:moduli_single_family}, the model spheres with \(t_0=0\)
arise as limits of the interior model families. The local description
extends to them with boundary parameter \(t_0=0\).

\medskip

We next strengthen \Cref{lem:uniqueness_near_L} in the same way as
\Cref{lem:uniqueness_lemma}.

\begin{lem}
    \label{lem:uniqueness_lemma_bis}
    There is a compactly contained open neighborhood $U_0'\Subset U_0$ of $L$ such that, for $i=1,2$, every $J$-holomorphic sphere intersecting $\overline{U_0'}$ and in the homology class $A_i$ must coincide, up to reparametrization, with a sphere in the $i$-th model family.
\end{lem}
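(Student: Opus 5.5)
The plan is to follow the proof of \Cref{lem:uniqueness_lemma} almost verbatim, replacing the single class $A$ by $A_i$ and using \Cref{lem:uniqueness_near_L} in place of \Cref{lem:local_uniqueness_single_family}. By \Cref{lem:uniqueness_near_L} it suffices to find $U_0'\Subset U_0$ such that every $J$-holomorphic sphere in class $A_i$ meeting $\overline{U_0'}$ is entirely contained in $U_0$. Note that the energy of such a sphere is $\Omega(A_i)=E_0$, the minimal $\Omega$-energy of a spherical class, by \Cref{item:splitting_mu__thm_rank_at_least_four}. This energy minimality is what will replace the energy bound used in the single-family case.

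\textbf{Step 1 (spheres through $L$).} First we show that every $J$-sphere $u$ in class $A_i$ meeting $\{t=0,s=0,v=0\}\times L$ is a model sphere of the $i$-th family. Set $\Sigma:=u^{-1}(\overline{U_0})$. The function $t\circ u|_\Sigma$ attains its maximum $0$ at an interior point. Applying \Cref{lem:weak_maximum_principle} to a slight cylindrical extension of $J$, we get that $u|_\Sigma$ is tangent to $\cK_\eta$, which near $L$ means tangent to the $L$-factor. By unique continuation, $u$ then lies in the slice $\{t=0,s=0,v=0\}\times L$, and is $J_0$-holomorphic there. Now the projection to $X$ is constant by symplectic asphericity, and the degree/energy count with $d_1+d_2=1$, together with the primitivity and distinctness of $A_1,A_2$ from \Cref{item:homology_classes__thm_rank_at_least_four}, shows that $u$ lies in the $i$-th family.

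\textbf{Step 2 (contradiction argument).} Suppose there are spheres $u_m$ in class $A_i$ that are not model spheres, yet meet shrinking neighborhoods $\{-t_m\}\times(-\delta_m,\delta_m)\times\{|v|<\rho_m\}\times L$ with $t_m,\delta_m,\rho_m\to0$. Their energy is uniformly bounded by $E_0$, so Gromov compactness (available thanks to the compactness of $W$ and the maximum principle at $M$, as in \Cref{lem:limit_set_single_family}) gives a nodal limit $u_\infty$. Some component $u_\infty^0$ of $u_\infty$ meets $\{t=s=0,v=0\}\times L$.

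Here is the one genuinely new point compared with \Cref{lem:uniqueness_lemma}. Each nonconstant bubble is a spherical class in $M$ or $W$ of positive energy. Since $E_0$ is the minimal energy of a spherical class and the total energy is $E_0$, exactly one component is nonconstant. I expect the main obstacle to be justifying this minimality for bubbles that are not contained in $M$. For components touching $M$, the maximum principle forces them into $M$, where minimality is given by the hypothesis. For interior bubbles, I would argue that a nodal tree with a component $u^0_\infty$ tangent to $\{t=0\}$ propagates tangency along nodes, so every component lies in $M$. Alternatively, one can invoke the induction over components used in \Cref{lem:uniqueness_lemma}.

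Given that, Step 1 applied to the unique nonconstant component shows that $u_\infty$ is a smooth model sphere of the $i$-th family with $t_0=0$; ghost components carry no class and can be discarded by stability. Finally, the model spheres are regular by \Cref{lem:regularity_near_L}, and the moduli space near them has the expected dimension $2n$ and consists only of model spheres, by the implicit function theorem (the local description in \Cref{sec:mod_space_spheres}, including at $t_0=0$). Hence $u_m$ must be a model sphere for $m$ large, which is a contradiction. Shrinking accordingly yields the desired $U_0'$ simultaneously for $i=1,2$.
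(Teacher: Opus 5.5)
Your proposal is correct and follows essentially the same route as the paper, whose proof simply says that the argument of \Cref{lem:uniqueness_lemma} carries over, with \Cref{lem:uniqueness_near_L} and the distinctness $A_1\neq A_2$ from \Cref{item:homology_classes__thm_rank_at_least_four} handling the presence of two classes. Using the minimal-energy hypothesis to force a single nonconstant component is only a cosmetic variant of the paper's induction over nodal components. Once your propagation argument places every component in $L$, the product form of $\Omega\vert_L$ already gives each nonconstant component energy in $E_0\bZ_{>0}$.
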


\begin{proof}
This can be proved analogously to \Cref{lem:uniqueness_lemma}.
The only difference is the fact that there are here two symplectically spherical homology classes; however, the assumption in \Cref{thm:rank_char_fol_at_least_four} that the two $\bS^2$ factors in $L$ define distinct homology classes in the fillings allows to argue in an analogous way to \Cref{lem:uniqueness_lemma}.
We hence omit the details.
\end{proof}


\section{Obstructing rank $2$ characteristic distributions}
\label{sec:proof}

The overall aim of this section is to prove \Cref{thm:rank_char_fol_at_least_four} from the Introduction.
As this proof has very similar ingredients to that of \Cref{thm:rank_char_fol_at_least_two} in \Cref{sec:proof_rank_char_fol_at_least_two}, we will sometimes just describe how the proofs from the previous sections need to be modified.

\medskip

First, according to \Cref{thm:rank_char_fol_at_least_two}, we know that $\cK_\eta$ has rank at least $2$ at each point of $M$.
As in \Cref{sec:proof_rank_char_fol_at_least_two}, the proof is again by contradiction.
In other words, we assume that there is $p\in M$ where the rank of $\cK_\eta$ is $2$, and deduce a contradiction.

First, note that, as $\rk\cK_\eta\geq 2$ everywhere on $M$, there is a neighborhood $V_M$ of $p$ in $M$ where the rank of $\cK_\eta$ will be exactly equal to $2$.
We hence work in this situation throughout this section.

\medskip

In \Cref{sec:curves_near_point_rank_2} we will describe how our $J$-holomorphic curves behave near $V\subset M\subset \partial W$.
In \Cref{sec:evaluation_map_pseudocycle_near_arc_bis} we then prove a pseudo-cycle property analogous to the one in the previous section.
Lastly, in \Cref{sec:proof_thm_rank_at_least_four} we will prove \Cref{thm:rank_char_fol_at_least_four}.


\subsection{$J$-curves near a boundary point of characteristic rank $2$}
\label{sec:curves_near_point_rank_2}

We first point out a special property of $J$-curves in $W$ that are sufficiently near the point $p\in M$ where the rank of $\cK_\eta$ is exactly $2$, that will play the role of the previous \Cref{lem:no_spheres_near_contact_point} in this new situation:
\begin{lem}
    \label{lem:controlled_spheres_near_contact_point}
    There is a neighborhood $V$ of $p$ in $(-\epsilon,0]_t\times M$ of the form $(-\epsilon,0]_t\times Op(p)$, with $Op(p)\subset M $ a neighborhood of $p$ in $M$, such that each $J$-holomorphic sphere $u$ intersecting $V$ has image that coincides, locally near $p$, with $\{t\}\times L$, where $t\in(-\epsilon,0]$ and $L$ is a leaf of $\cK_\eta\vert_{Op(p)}$. 
\end{lem}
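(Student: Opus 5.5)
We will argue in the same way as in \Cref{lem:no_spheres_near_contact_point}, but now keep track of where the curves go instead of ruling them out. First, shrink $\epsilon$ and choose $Op(p)\subset V_M$ small. On $V_M$ the rank of $\cK_\eta$ is constantly $2$. Since $\cK_\eta$ is involutive, it is a genuine rank-$2$ foliation of $Op(p)$, and we may take $Op(p)$ to be a foliation chart whose plaques are discs $L$. By the choice made after \Cref{lem:no_spheres_near_contact_point}, the function $t$ is weakly pluri-subharmonic on $(-\epsilon,0]\times M$ with respect to $J$. Every slice $\{t\}\times M$ carries the same confoliation $\eta$, and its characteristic foliation is the same $\cK_\eta$.

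Now let $u$ be a $J$-holomorphic sphere meeting $V=(-\epsilon,0]\times Op(p)$, say at $u(z_1)$. Set $\Sigma:=u^{-1}((-\epsilon,0]\times M)$. The key step is to show that $t\circ u|_\Sigma$ attains an interior maximum at some point of $\Sigma$. This holds because $\Sigma$ is closed in the compact sphere away from the level $\{t=-\epsilon\}$, and $u$ enters the collar at $z_1$ with $t>-\epsilon$. We then apply \Cref{lem:weak_maximum_principle}, using a slight cylindrical extension of $J$ past $t=0$ as in the proof of \Cref{lem:uniqueness_lemma}. This shows that, on the connected component of $\Sigma$ containing the maximum, $u$ lies in a single level $\{t_0\}\times M$ and has $\d u$ with image in $\cK_\eta$.

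The main obstacle is to pass from the maximum point to the point near $p$. We would handle this with unique continuation and a connectedness argument. The set of points of $\bS^2$ where $u$ is tangent to the level $t_0$ and to $\cK_\eta$ is open, by the maximum principle applied locally. It is also closed, by continuity together with the fact that $u$ cannot leave the collar through $t=-\epsilon$ from a level set without decreasing $t$, which would contradict maximality. Hence this set is the whole sphere, so $t\circ u\equiv t_0$ and $u$ is everywhere tangent to $\cK_\eta$. There is a subtle point if the maximum level lies below the level at $z_1$; in that case we apply the argument to the maximum directly, which then forces $t\circ u$ to be constant anyway.

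Finally, we restrict to $u^{-1}(\{t_0\}\times Op(p))$. There $u$ is a map tangent to the rank-$2$ foliation $\cK_\eta$, so it takes values in a single plaque $L$ on each connected component. Since $u$ is nonconstant and $J$-holomorphic, $\d u$ has rank $2$ away from a discrete set of critical points. Its image near $p$ is therefore an open subset of the plaque $\{t_0\}\times L$. By properness, or after shrinking $Op(p)$ so that the plaques are the leaves of $\cK_\eta|_{Op(p)}$, the image of $u$ coincides locally near $p$ with $\{t_0\}\times L$, which is the claim.
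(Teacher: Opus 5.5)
Your proof is correct, and it takes a more direct route than the one the paper points to. The paper omits the proof and calls it analogous to that of \Cref{lem:no_spheres_near_contact_point}. That earlier proof argues by contradiction: Gromov compactness produces a limit nodal curve through $p\in M$, and the weak maximum principle is applied only to that limit.

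You instead apply \Cref{lem:weak_maximum_principle} to each sphere individually. This uses that $V$ is a product and that $t$ is weakly pluri-subharmonic on the whole collar $(-\epsilon,0]\times M$. You obtain the stronger fact that every sphere meeting $\{t>-\epsilon\}$ lies entirely in a single slice and is everywhere tangent to $\cK_\eta$. Frobenius on the rank-$2$ region then finishes.

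This is what the per-sphere statement actually needs. A literal transcription of the compactness argument would only describe Gromov limits of curves approaching $p$, while the lemma concerns every sphere meeting $V$. Your version also needs no energy bound or homology constraint, which Gromov compactness would require.

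A few expository points:
\begin{itemize}
\item The ``subtle point'' you raise cannot occur: $t_0$ is the maximum of $t\circ u$ over $\Sigma$, and $z_1\in\Sigma$.
\item The open--closed step is cleanest with $S:=(t\circ u)^{-1}(t_0)$. It is closed by continuity. It is open because each of its points is an interior maximum, so the weak maximum principle applies on a small disc around it.
\item The word ``coincides'' needs $u(\bS^2)\cap P$ to be closed as well as open in each plaque $P$. Closedness follows from compactness of $u(\bS^2)$, which is what your ``properness'' remark should state. The same compactness shows that only finitely many plaques are hit, so near each intersection point the image is exactly one plaque.
\end{itemize}
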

    The proof is completely analogous to that of \Cref{lem:no_spheres_near_contact_point}, and is hence omitted.

\bigskip

Now, as done in \Cref{sec:proof_rank_char_fol_at_least_two}, fix a point \(p_0\in L\), and choose a smooth path
\[
\gamma\colon[0,1]\longrightarrow W
\]
such that
\[
\gamma(0)=p_0,
\qquad
\gamma(1)=p,
\qquad
\gamma((0,1))\subset W\setminus M.
\]
We may moreover arrange that \(\gamma\) agrees near \(p_0\) with a path tangent to the $t$-coordinate in
the neighborhood \(U_0\), and that
\[
[1-\epsilon,1]= \gamma^{-1}(\overline V)
\]
for some sufficiently small \(\epsilon>0\).


\subsection{Pseudo-cycle property near a special arc}
\label{sec:evaluation_map_pseudocycle_near_arc_bis}

We now study how the limit set of the relative evaluation maps behave in this new setting.
More precisely, let \(\Omega_i\) be the limit set of \(\ev_i\), defined as in
\Cref{sec:moduli_single_family}, and set
\[\Omega_i^\circ:=\Omega_i\cap(W\setminus M) \, .\]

Using \Cref{lem:controlled_spheres_near_contact_point} instead of \Cref{lem:no_spheres_near_contact_point}, one can argue similarly to \Cref{lem:limit_set_single_family} in order to prove the following:

\begin{lem}
\label{lem:evaluation_pseudocycles}
For generic \(J\), each evaluation map \(\ev_i\) has image with compact
closure, and there is a sufficiently small neighborhood $V_\gamma$ of $\gamma$ in $W$, coinciding with $V$ near the endpoint $p\in M$, such that
\[
\dim(\Omega_i^\circ\cap V_\gamma)\leq\dim\cM_*^i-2=\dim W-2.
\]
\end{lem}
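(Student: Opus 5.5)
The plan follows the proof of \Cref{lem:limit_set_single_family} closely, with \Cref{lem:controlled_spheres_near_contact_point} replacing \Cref{lem:no_spheres_near_contact_point}. It has three steps.

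\emph{Step 1: compact closure.} Fix $i\in\{1,2\}$. Every curve in $\cM_*^i$ has $\Omega$-energy $\Omega(A_i)$, since the class is fixed. Gromov compactness, the compactness of $W$ and the weak maximum principle (\Cref{lem:weak_maximum_principle}) then show that $\ev_i$ has image with compact closure. Under Hypothesis \hypref{hyp:cobordism_from_hypertight_shs} we also need that sphere sequences cannot escape into the concave end. Since there are no contractible Reeb orbits, no holomorphic plane can appear in an SFT-type limit, so all limits stay in a compact part of $W$.

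\emph{Step 2: limits near the endpoint $p$ of $\gamma$.} Take $V_\gamma$ to be a thin neighbourhood of $\gamma$ that equals $V$ near $p$. Let $q\in\Omega_i^\circ\cap V_\gamma$ lie near $p$, and consider a nodal limit whose marked point evaluates to $q$. By \Cref{lem:controlled_spheres_near_contact_point}, any component meeting $V$ coincides near $p$ with a slice $\{t\}\times L'$, where $L'$ is a leaf of the rank-$2$ foliation $\cK_\eta|_{Op(p)}$. Since $q\notin M$, we have $t<0$. The weak maximum principle applied to $t\circ u$ then makes the component tangent to the slice, and unique continuation propagates this along the chain of components through the nodes. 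So such limit points lie in the $2$-dimensional leaves of $\cK_\eta$ on slices $\{t\}\times Op(p)$. These leaves together with the extra nodal/marked-point data are parametrized by a family of dimension at most $(2n+1)-2+1-2=\dim W-2$: leaves form a $(2n-1)$-parameter family in $M$ and $t$ adds one, but a nodal configuration forces at least one further codimension-$2$ constraint, namely the loss of the freedom of the marked point along a smooth top-stratum curve. Concretely, I would realize this count with an explicit smooth map from a manifold $Y$ of this dimension.

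\emph{Step 3: limits along the interior of $\gamma$, and the main obstacle.} Away from $p$ the argument is the same as in \Cref{lem:limit_set_single_family}. First, replace non-simple components by their underlying simple curves using \cite[Proposition 6.1.2]{McDuffSalamon_Jholomorphic}. Second, show each component is regular by the case analysis of \Cref{lem:regularity_curves_intersecting_gamma}. Components with an injective point outside $[-\epsilon,0]\times M$ are regular by \Cref{thm:regularity_away_from_L}. Components in the collar either are slice-tangent and then end up in $\overline{U_0}$, where they are model spheres and regular by \Cref{lem:uniqueness_lemma_bis} and \Cref{lem:regularity_near_L}, or are handled by Step 2. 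Third, semi-positivity and \cite[Theorem 6.6.1]{McDuffSalamon_Jholomorphic} give that the nodal strata have codimension at least $2$.

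The main obstacle is that, unlike the contact case, curves near $p$ do exist, possibly as non-regular components tangent to the non-generic collar. I would handle this by not claiming regularity there at all. Instead I would bound the size of the limit set directly through the leaf description in Step 2. One extra point needs care: a bubble tree in class $A_i$ cannot mix a component near $p$ with components elsewhere in a way that enlarges this count. I would exclude this by the minimal-energy assumption \Cref{item:splitting_mu__thm_rank_at_least_four}, which forces any nodal limit in class $A_i$ to consist of a single non-constant component together with ghosts.
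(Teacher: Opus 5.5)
Your overall architecture matches the paper's proof, which simply reruns \Cref{lem:limit_set_single_family} for each class with \Cref{lem:controlled_spheres_near_contact_point} substituted for \Cref{lem:no_spheres_near_contact_point}. The gap is in your Step 2, which is where you go beyond that and which you make the main mechanism near $p$. The dimension count there does not work, for three reasons. First, the rank-$2$ leaves in the slices $\{t\}\times Op(p)$ form a $2n$-parameter family whose union is all of $V$. So knowing that limit points lie on such leaves gives no bound at all. Second, the claimed extra codimension-$2$ constraint from nodality has no source. On $[-\epsilon,0]\times M$ the structure $J$ is the fixed, non-generic collar structure, so no transversality is available there. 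Only an energy argument, not genericity, can stop an open family of leaves from appearing as components of nodal limits. Third, the arithmetic is off: $(2n+1)-2+1-2=2n-2$, whereas $\dim W-2=2n$.

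The missing step is the one you use only to exclude ``mixing'' in your last paragraph. Used fully, it makes the near-$p$ part of the limit set empty. Suppose $q=\lim\ev_i([u_j,z_j])\in V\setminus M$, so that $t(q)\in(-\epsilon,0)$.
\begin{itemize}
\item The component of the Gromov limit carrying the marked point reaches level $t(q)>-\epsilon$. By \Cref{lem:weak_maximum_principle} it therefore lies in the slice $\{t(q)\}\times M$.
\item Each neighbouring component passes through a node at that level, so the same argument puts it in the same slice. Inductively, the whole nodal curve lies in that slice.
\item Only now does \Cref{item:splitting_mu__thm_rank_at_least_four} apply, since it concerns spheres in $M$, not in $W$. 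Every non-constant component has energy at least $E_0=\Omega(A_i)$, so there is exactly one.
\item There are no ghosts: a constant component would carry at most the marked point and one node, hence would be unstable.
\item The component is simple: a $k$-fold cover with $k\geq 2$ would yield a sphere in $M$ of energy $E_0/k$.
\end{itemize}
So the limit is a smooth simple sphere in class $A_i$. Hence $[u_j,z_j]$ has a convergent subsequence in $\cM_*^i$ and $q\notin\Omega_i$. This gives $\Omega_i^\circ\cap V=\emptyset$, with no regularity needed near $p$. Your Step 3 then only has to handle points of $V_\gamma$ away from $V$, exactly as in \Cref{lem:limit_set_single_family}.
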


\begin{proof}
The proof of \Cref{lem:limit_set_single_family} applies separately to the
two classes. The local regularity and uniqueness needed in that argument
are provided by
\Cref{lem:regularity_near_L,lem:uniqueness_near_L}, while relative
transversality away from \([-\epsilon,0]\times M\) follows from
\Cref{thm:regularity_away_from_L}.
Moreover, \Cref{lem:no_spheres_near_contact_point} should be substituted in this situation by \Cref{lem:controlled_spheres_near_contact_point}.
Lastly, the same compactness,
semi-positivity, and dimension-counting arguments therefore give the
claimed estimate for both interior limit sets.
\end{proof}

As in \Cref{rmk:not_full_pseudocycle_single_family}, the preceding lemma
does not control the part of \(\Omega_i\) contained in \(M\). This causes no
difficulty because the paths used below have interiors contained in
\(W\setminus M\).

\bigskip

Afterwards, we record a consequence of the $1$-dimensional moduli-space argument
used in the proof of \Cref{thm:rank_char_fol_at_least_two}.

\begin{lem}
\label{lem:stable_spheres_through_boundary_points}
For every \(q\in Op(p)=V\cap M\) and \(i\in\{1,2\}\), there exists a marked nodal
\(J\)-holomorphic sphere representing \(A_i\) whose image contains \(q\).
\end{lem}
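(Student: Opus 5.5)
The plan is to rerun the one-dimensional cobordism argument from the proof of \Cref{thm:rank_char_fol_at_least_two}, with the endpoint $p$ of $\gamma$ replaced by an arbitrary $q\in Op(p)$. We argue by contradiction, fixing $q\in Op(p)$ and $i\in\{1,2\}$ and assuming no marked nodal $J$-holomorphic sphere in class $A_i$ passes through $q$.

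First, we upgrade this assumption to an open neighborhood. The set of points lying on some nodal sphere in class $A_i$ is closed, by Gromov compactness together with the uniform energy bound $\Omega(A_i)$ and the compactness of $W$ (the image of $\ev_i$ has compact closure by \Cref{lem:evaluation_pseudocycles}). Hence there is a neighborhood $V_q\subset W$ of $q$ that no curve of $\cM_*^i$, nor any of its Gromov limits, meets. This is the precise analogue of \Cref{lem:no_spheres_near_contact_point}, now obtained from the contradiction hypothesis rather than from the rank condition.

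Next, we choose an arc $\gamma_q$ from $p_0\in L$ to $q$ with interior in $W\setminus M$. It should agree near $p_0$ with the $t$-direction path in $U_0$, and it should stay inside the neighborhood $V_\gamma$ of \Cref{lem:evaluation_pseudocycles}. To arrange this, we follow $\gamma$ up to its portion inside $V=(-\epsilon,0]\times Op(p)$ and then move within $V$ to $q$, shrinking $Op(p)$ if needed so that $V_\gamma\supset V$. By \Cref{lem:evaluation_pseudocycles}, $\Omega_i^\circ\cap V_\gamma$ has dimension at most $\dim W-2$, so a perturbation of $\gamma_q$ relative to its endpoints makes $\gamma_q((0,1))$ avoid $\Omega_i^\circ$ and be transverse to $\ev_i$.

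Then $\ev_i^{-1}(\gamma_q((0,1)))$ is a smooth $1$-manifold. Regularity of the curves involved follows as in \Cref{lem:regularity_curves_intersecting_gamma}, using \Cref{thm:regularity_away_from_L}, \Cref{lem:regularity_near_L}, and, near $V$, the control from \Cref{lem:controlled_spheres_near_contact_point}. The ends are analyzed as in \Cref{sec:obstruction_presence_contact_regions}:
\begin{itemize}
\item near $p_0$ there is exactly one end, the unique marked model sphere, by \Cref{lem:uniqueness_lemma_bis};
\item no end occurs by nodal breaking at interior points, since $\gamma_q((0,1))$ avoids $\Omega_i^\circ$;
\item no end occurs near $q$, because no curve or Gromov limit meets $V_q$.
\end{itemize}
We therefore obtain a compact $1$-manifold with exactly one boundary point, which is a contradiction.

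The main obstacle is the endpoint $q$. Near $q$, \Cref{lem:controlled_spheres_near_contact_point} allows curves tangent to rank-$2$ leaves $\{t\}\times L$, so these may approach $q$ through the collar, and a limit of the family could be a nodal curve touching $M$ exactly at $q$. The contradiction hypothesis is used precisely to exclude this, via the closedness argument of the first step. The delicate point is checking that the limit set is genuinely closed even for limits lying in $M$, where regularity fails. Only Gromov compactness is needed for this, not transversality, so it should go through.
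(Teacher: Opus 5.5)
Your proposal is correct and follows essentially the same route as the paper. Both argue by contradiction and use Gromov compactness to obtain a neighborhood of $q$ that no curve of $\cM_*^i$ meets; the paper writes ``a neighborhood of $p$'' at this step, evidently meaning $q$. Both then rerun the one-dimensional moduli space argument along a perturbed path from $p_0$ to $q$ inside $V_\gamma$, which yields a compact $1$-manifold with a single boundary point. Your optional shrinking of $Op(p)$ is harmless, since the lemma is only applied at $q=p$ or at points with their own freely chosen neighborhoods.
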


\begin{proof}
Fix \(i\in\{1,2\}\). We repeat the path argument from the proof of
\Cref{thm:rank_char_fol_at_least_two}, with
\[
(\cM_A^*,\ev_A,\Omega_A^\circ)
\quad\text{replaced by}\quad
(\cM_*^i,\ev_i,\Omega_i^\circ).
\]

More precisely, fix \(p_0\in L\) and choose a path
\(\gamma'\colon[0,1]\to V_\gamma\) from \(p_0\) to \(q\), with interior contained
in \(W\setminus M\). Near \(p_0\), we choose \(\gamma'\) in the product
neighborhood and transverse to \(M\).

By \Cref{lem:uniqueness_lemma_bis,lem:evaluation_pseudocycles}, after perturbing \(\gamma'\) relative
to its endpoints, we may assume that \(\gamma'((0,1))\) avoids
\(\Omega_i^\circ\cap V_\gamma\) and that \(\ev_i\) is transverse to
\(\gamma'((0,1))\). Hence
\[
(\ev_i)^{-1}\bigl(\gamma'((0,1))\bigr)
\]
is a smooth one-dimensional manifold having one end converging to the
unique marked model sphere through \(p_0\).

If no marked nodal sphere representing \(A_i\) passed through \(q\), then
Gromov compactness would give a neighborhood of \(p\) disjoint from the
images of all curves in \(\cM_*^i\). 
The compactification argument in the
proof of \Cref{thm:rank_char_fol_at_least_two} would then produce a compact
$1$-dimensional manifold with exactly one boundary point, corresponding
to the model sphere through \(p_0\). This is impossible. Therefore a
marked nodal sphere representing \(A_i\) passes through \(q\).
\end{proof}


\subsection{Proof of maximality of the characteristic rank}
\label{sec:proof_thm_rank_at_least_four}

We work in the situation arranged in the previous part of \Cref{sec:proof} (arguing by contradiction).

For \(i=1,2\), let \(\mathbf u_i\) be a nodal sphere representing \(A_i\)
and passing through \(p\), as provided by
\Cref{lem:stable_spheres_through_boundary_points}. By the weak maximum
principle, every nonconstant component of \(\mathbf u_i\) is contained in
\(M\) and, over \(Op(p)=V\cap M\), is tangent to \(\cK_\eta\).

Let
\[
E_0:=\Omega(A_1)=\Omega(A_2)=\omega_{\bS^2}([\bS^2]).
\]
By 
\Cref{item:splitting_mu__thm_rank_at_least_four},
\(E_0\) is the minimal
positive energy of a spherical class in \(M\). Every nonconstant component
of \(\mathbf u_i\) therefore has energy at most \(E_0\). Since the total
energy of \(\mathbf u_i\) is \(E_0\), it follows that \(\mathbf u_i\) has
exactly one nonconstant component. Denote it by
\[
u_i\colon\bS^2\longrightarrow M.
\]
It represents \(A_i\), and its image contains \(p\).

Moreover, \(u_i\) is simple. Otherwise it would be a nontrivial multiple
cover of a sphere with positive energy strictly smaller than \(E_0\),
contrary to the minimality of \(E_0\).

Since \(\cK_\eta\) has rank \(2\) on \(Op(p)\), the images of \(u_1\) and \(u_2\)
agree near \(p\) with an open subset of the characteristic leaf through
\(p\). It follows from
\cite[Proposition 2.4.4]{McDuffSalamon_Jholomorphic}
that
\[
u_1(\bS^2)=u_2(\bS^2).
\]
As \(u_1\) and \(u_2\) are simple, they therefore represent the same
homology class in \(W\). This contradicts
\Cref{item:homology_classes__thm_rank_at_least_four}, which states that
\(A_1\neq A_2\).

Consequently, rank \(2\) cannot occur. Together with the rank-two lower
bound established above, this gives
\[
\rk(\cK_\eta)_p\geq4
\]
for every \(p\in M\), as desired.
\hfill
\qedsymbol


\subsection{Alternative proof in ambient dimension $5$}
\label{sec:proof_rank_at_least_four_alternative_assumption}

We now prove \Cref{thm:rank_char_fol_at_least_four} under the alternative Assumption \ref{item:splitting_mu_bis__thm_rank_at_least_four} (instead of \ref{item:splitting_mu__thm_rank_at_least_four}).
Note that in this ambient dimension $X=\{pt\}$, which is hence symplectically aspherical.
The only difference with respect to \ref{item:splitting_mu__thm_rank_at_least_four} is hence that we don't know whether the $\bS^2$-factors have minimal $\Omega$-energy.

This change of hypothesis doesn't affect the proof of the fact that $\rk\cK_\eta\geq 2$ at all points of $M$ from the previous section. 
In order to prove that in fact such rank is equal to $4$ (recall we are in ambient dimension $\dim M = 5$), we can then argue as follows leveraging \Cref{prop:fully_foliated_part_closed} stating closedness of the fully foliated part.

\bigskip

Consider then any potential point $p\in M$ such that $\rk(\cK_\eta)_p=2$.

\begin{claim}
\label{claim:honestly-nodal}
For at least one $i\in\{1,2\}$, there is a marked \emph{honestly-nodal} $J$-holomorphic curve $[u_p^i,z_p^i]$ passing through $p$, in the homology class of the model spheres $u_{t_0,s_0,v_0,q_0,x_0}^i$.
\end{claim}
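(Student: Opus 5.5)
We argue by contradiction: assume that for both $i=1,2$ every marked nodal $J$-holomorphic curve in the class $A_i$ passing through $p$ has a single non-constant component. The existence of some nodal sphere $\mathbf u_i$ in class $A_i$ through $p$ is provided by \Cref{lem:stable_spheres_through_boundary_points} (its proof does not use the minimal-energy assumption). Under our assumption, $\mathbf u_i$ reduces to a single non-constant sphere $u_i\colon\bS^2\to W$ with $p\in u_i(\bS^2)$. By the weak maximum principle (\Cref{lem:weak_maximum_principle}), $u_i$ lies in $M$ and is tangent to $\cK_\eta$ everywhere. We may also assume that $u_i$ is simple. Otherwise $u_i$ would multiply cover a sphere $v$ with $[u_i]=d[v]$ for some $d\geq2$, contradicting primitivity of $A_i$ from \Cref{item:homology_classes__thm_rank_at_least_four}. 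Note that primitivity replaces the energy-minimality argument used in \Cref{sec:proof_thm_rank_at_least_four}.

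Next we exploit the rank-$2$ neighborhood $Op(p)$. There $\cK_\eta$ is a rank-$2$ foliation, and both $u_1$ and $u_2$ are tangent to it, so near $p$ their images coincide with an open piece of the leaf through $p$. Exactly as in \Cref{sec:proof_thm_rank_at_least_four}, unique continuation for simple curves (\cite[Proposition 2.4.4]{McDuffSalamon_Jholomorphic}) gives $u_1(\bS^2)=u_2(\bS^2)$. Since both curves are simple, they are reparametrizations of each other, so $A_1=A_2$. This contradicts \Cref{item:homology_classes__thm_rank_at_least_four} and proves the claim.

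The delicate point is the first step: making sure that \Cref{lem:stable_spheres_through_boundary_points} and the pseudocycle estimate \Cref{lem:evaluation_pseudocycles} still hold without \ref{item:splitting_mu__thm_rank_at_least_four}. Their proofs used energy minimality only through local uniqueness near $L$ (\Cref{lem:uniqueness_near_L}). In dimension $5$ we have $L=\bS^2\times\bS^2$, and uniqueness there follows instead from the product structure. The holomorphic projections to $(t,s)$ and $v$ are constant. The degrees $(d_1,d_2)$ of the two sphere projections determine the class $[u]=d_1A_1+d_2A_2$ in $H_2(W)$. Since $A_1$ and $A_2$ are distinct primitive classes, if $A_1$, $A_2$ are independent in $H_2(W)$ this forces $(d_1,d_2)=(1,0)$ or $(0,1)$. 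I expect justifying this independence (or otherwise ruling out $d_1A_1+d_2A_2=A_i$ with mixed degrees) to be the main obstacle. One could try comparing energies together with non-negativity of the $d_j$. If this step only works near $L$, that is all the path argument needs, because the path $\gamma$ starts in the product neighborhood.
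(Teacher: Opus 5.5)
Your argument is correct and is essentially the paper's proof: it gets the nodal spheres through $p$ from \Cref{lem:stable_spheres_through_boundary_points}, assumes both are smooth, and uses unique continuation along the rank-$2$ leaf through $p$ to force equal images and hence $A_1=A_2$. Your use of primitivity to get simplicity, needed to pass from equal images to equal classes, makes explicit a step the paper leaves implicit, and the obstacle you anticipate is not one: under 2(b)i-bis one still has $\Omega(A_1)=\Omega(A_2)=E_0>0$, so $\Omega(d_1A_1+d_2A_2)=(d_1+d_2)E_0=E_0$ with $d_1,d_2\geq 0$ already gives $d_1+d_2=1$, with no linear independence needed.
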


Note that each nodal component of such a honestly-nodal curve has energy at most $E_0-\hbar$, where $E_0=\omega_{\bS^2}(\bS^2)$ and $\hbar$ is the minimal $\Omega$-energy that a non-constant $J$-holomorphic sphere in $W$ can have by the energy quantization property (c.f.\ \cite[Proposition 4.1.4]{McDuffSalamon_Jholomorphic}).

\begin{proof}
The fact that, for $i=1,2$, there is a (possibly-not-honestly-)nodal curve $[u_p^i,z_p^i]$ passing through $p$ 
is exactly the content of \Cref{lem:stable_spheres_through_boundary_points}.
We then should argue that at least one such $[u_p^i,z_p^i]$ must in fact be \emph{honestly-nodal}. This can be argued as follows.

Assume by contradiction that both are smooth.
As $\rk (\cK_\eta)=2$ near $p$ by upper semi-continuity of the rank, the image of both $u_p^i$ must then coincide with the leaf of $\cK_\eta$ through $p$ near $z_p^i$.
As both these curves are non-constant, according to \cite[Proposition 2.4.4]{McDuffSalamon_Jholomorphic} their images must hence coincide; but this contradicts the fact that $u_p^1$ is in the homology class of 
$u_{t_0,s_0,v_0,q_0,x_0}^1$,
and that $u_p^2$ is in that of $u_{t_0,s_0,v_0,q_0,x_0}^2$, which is distinct.
This concludes.
\end{proof}

We then consider the maximal (under inclusion) path-connected subset $\cS$ of $M$ containing $L$ and made of those points through which passes an embedded $\bS^2\times \bS^2$ that is everywhere tangent to $\eta$ (and hence to $\cK_\eta$ as we are in ambient dimension $5$).
Note that, by upper semi-continuity of $\rk \cK_\eta$, the fact that $\Fol(\eta)$ is closed and Reeb stability in the foliated setup, the set $\cS$ is closed in $M$.

Assume now by contradiction that $\cS$ is not open.
(Note that, as $\cS$ is closed, proving that it is open would give that it coincides with an entire connected component of $M$.)
We claim that there is then a sequence $p_n$ so that $\rk (\cK_\eta)_{p_n}=2$, converging to $p_\infty\in \cS$.

Indeed, if $\cS$ is not open then there is a sequence of points $p_n$ in its complement converging to $p_\infty \in \cS$.
Now, if $p_n$ all satisfy that $\rk (\cK_\eta)_{p_n}=4$ (for $n$ big enough), then again by Reeb stability in the foliated setup and the fact that $\Fol(\eta)$ is closed we would deduce that $\cS$ can be enlarged to a strictly bigger subset of $M$ that is foliated by leaves diffeomorphic to $\bS^2\times \bS^2$, which contradicts the maximality of $\cS$ under inclusion.
Hence we deduce the claim.

According to \Cref{claim:honestly-nodal}, we then have an associated sequence $[u_n,z_n]$ of \emph{honestly-nodal} curves given by the above claim, so that $u_n(z_n)=p_n$.
Let moreover $[v_n,z_n]$ be the nodal connected component containing the marked point $z_n$, which then must have energy less than $E_0-\hbar$ (c.f.\ the paragraph after the statement of \Cref{claim:honestly-nodal}).

By Gromov's compactness theorem we know that the $[v_n,z_n]$ Gromov-converge to a limit nodal curve $[v_{\infty},z_\infty]$ so that $v_\infty(z_\infty)=p_\infty$.
Moreover, either by the maximum principle or by the fact that all $u_n$'s have image in $M$, $v_\infty$ also has image entirely contained in $M$. 
In fact, as $p_\infty\in \cS$, $v_\infty$ has image in a closed leaf $L_\infty$ diffeomorphic to $S^2\times S^2$ in the fully foliated part $\Fol(\eta)$ of $\eta$.

But since $\Omega([v_\infty(S^2)])\leq E_0-\hbar<E_0$ as well because this holds for all $v_n$'s, we get a contradiction.
Indeed, by the global Reeb stability theorem for foliations, each closed leaf in $\cS$ is homotopic to $L$, 
and we hence know that each sphere in $L_\infty$ is homotopic to one in $L$ and spheres in the latter have minimal $\Omega$-energy equal to $E_0$.

This contradiction shows that $\cS$ must in fact be open.
Hence, it coincides with the whole connected component $M$ of $\partial W$.
This concludes the proof of \Cref{thm:rank_char_fol_at_least_four} in ambient dimension $5$, under the alternative assumption \ref{item:splitting_mu_bis__thm_rank_at_least_four}.
\hfill
\qedsymbol


\section{Contact approximations and weak fillability}
\label{sec:contact_approximations_weak_fillability}

This section is devoted to the discussion of two new notions of contact approximations of c-symplectic confoliations in high dimensions. 
Namely, we introduce and discuss some properties of tame contact approximations in \Cref{sec:contact_tame_approximations}, and strict ones in \Cref{sec:contact_strict_approximations}.
Afterwards, we describe natural examples of both in \Cref{sec:examples_contact_strict_and_tame_approx_and_deform}, thus proving \Cref{prop:examples_contact_approx_and_deform} from the Introduction.
Then, we prove \Cref{thm:weak_fillability_stable_under_tame_contact_approx,thm:weak_fillability_stable_under_strict_contact_approx} on the stability of weak fillability under these two notions of contact approximations in \Cref{sec:stability_weak_fillability}.
Lastly, in \Cref{sec:sympl_fol_without_contact_approx} we prove \Cref{cor:no_contact_approximations_product_foliation} from the Introduction.


\subsection{Contact approximations via complex structures}
\label{sec:contact_tame_approximations}

We start by recalling a natural notion of convergence of sequences of endomorphisms defined on converging vector sub-bundles of $TM$.

Given a hyperplane field $\eta$ on a Riemannian manifold $(M,g)$ and an endomorphism $J\in\End(\eta)$, we denote by $\widetilde J$ its extension to $TM=\eta\oplus \eta^{\perp}$ obtained by posing $\widetilde J\vert_{\eta^{\perp}}=0$.
Whenever $M$ does not come equipped with any special Riemannian metric, as in our setting, we will simply use any auxiliary one.

Let now $\eta$ be a hyperplane field, and $(\xi_m)_{m\in \bN}$ be a sequence of hyperplane fields $C^k$-converging to $\eta$ for some $k\geq 1$.
(Recall \Cref{rmk:C1_regularity_for_contact_approximations}.)
More explicitly, by this we mean that there is a sequence of one-forms $\alpha_m$ defining $\xi_m$ so that $\alpha_m \xrightarrow{C^k}\beta$ with $\beta$ a defining one-form for $\eta$.

Consider now an endomorphism $J\in \End(\eta)$ of $\eta$, and a sequence of endomorphisms $J_m\in \End(\xi_m)$ of $\xi_m$.
Given $k\geq 1$, we will say that $J_m$ \textbf{$C^k$-converges} to $J$ if for some (and hence any) auxiliary Riemannian metric $g$ on $M$, the extension $\widetilde J_m$ to $TM=\xi_m\oplus \xi_m^\perp$ converges, in the space $\End(TM)$ equipped with its norm $C^k$, to the extension $\widetilde J $ on $TM=\eta\oplus \eta^\perp$.
(The fact that this notion is independent of the auxiliary Riemannian metric $g$ is a straightforward calculation.)

\medskip

With these preliminary notions of smooth convergence recalled, we can then give the following.
\begin{defn}
    \label{def:contact_tame_approx_and_deform}
    Let $k\geq 1$ be an integer, $(\eta,\CS_{\eta,\mu})$ a tame c-symplectic confoliation, and $(\xi_m)_{m\in \bN}$ a sequence of contact structures.
    Then, $(\xi_m)_{m\in \bN}$ is a \textbf{contact tame $C^k$-approximation} of $(\eta,\CS_{\eta,\mu})$ if there are a complex structure $J$ on $\eta$ tamed by $\mu$ and $\CpS_\eta$, and a sequence of complex structures $J_m$ on $\xi_m$, each tamed by $\CS_{\xi_m}$ on $\xi_m$, and $C^k$-converging to $J$ on $\eta$ as $m\to \infty$.
\end{defn}

\begin{rmk}
    \label{rmk:contact_tame_approx_3D}
    By the fact that an area form of a $2$-dimensional vector space induces a tamed complex structure on it, unique up to homotopy among tamed complex structures, in ambient dimension $3$, the above notion extends that of contact approximations of confoliations by Eliashberg and Thurston \cite{EliashbergThurston_Confoliations}.
\end{rmk}


\subsection{Contact approximations via differential forms}
\label{sec:contact_strict_approximations}

As outlined in the introduction, the overall idea is to define contact approximation of a c-symplectic confoliation $(\eta,\CS_{\eta,\mu})$ a sequence of contact structures $\xi_m$ which smoothly (i.e.\ as hyperplane fields) converge to the underlying smooth confoliation as $m\to \infty$, in such a way that, for some (and hence any) choice of defining one-forms $\alpha_m$ for $\xi_m$, up to conformal rescaling (depending on $m$) the wedge powers of the form $\alpha_m\wedge \d\alpha_m^k\wedge \omega^{n-k}$ converge to positive volume forms as $m\to \infty$.

There are mainly two sources of cumbersome technicalities in formalizing this simple idea.
The first is the fact that a priori, the conformal rescaling might depend on the point, in a discontinuous way (c.f.\ \Cref{exa:subpartition_necessary} below). 
Hence, one cannot simply rescaling by a smooth function on the ambient manifold that additionally depends on $m$.
The second source consists in the fact that this convergence (up to rescaling) to positive volume forms is necessary only for $k$ strictly bigger than $n$ minus the rank of $\cK_\eta$.
This rank function is not continuous, hence this is a second obstruction to choosing rescalings that are smooth functions on the ambient manifold.
(In fact, this second issue can be solved up to a slight sacrifice in terms of generality of the definition; c.f.\ \Cref{rmk:strict_contact_approximation}.)

For this reason, we now need to formalize the notion of ``limits of differential forms up to rescaling depending discontinuously on the points''.
We follow here the approach of \emph{partitioned differential forms} from \cite{CardonaGironella_ConfoliationsHighDim}, which we recall briefly below.


\subsubsection{Recalls on partitioned differential forms}
\label{sec:partitioned_forms}

We recall here briefly the notion of \emph{partitioned differential forms} and their \emph{conformal convergence} from \cite{CardonaGironella_ConfoliationsHighDim}, and refer the reader to the latter for more details.

\bigskip

Let $M$ be a smooth manifold and $X\subset M$ be any subset.
A \textbf{$C^k$-differential $l$-form $\mu$ on $X$} is a section of the bundle $\Lambda^l(T^* M\vert_X)\to X$ such that there are an open neighborhood $U\subset M$ of $X$ and a (honest) $C^k$-differential $l$-form $\widetilde{\mu}$ on $U$ such that $\widetilde{\mu}\vert_X = \mu$ as sections of $\Lambda^l(T^* M\vert_X)\to X$.

Given a finite partition $M= \sqcup_{i\in I} X_i$ of $M$, we call \textbf{$C^k$-differential partitioned-form $(\mu_i)_{i\in I}$} a collection of $C^k$-differential forms $\mu_i$ on $X_i$, of degree $\deg(\mu_i)=d_i$ possibly depending on $i$.
If $d_i=l$ for all $i\in I$, we will talk about \textbf{$l$-forms}. 
A $C^k$-differential partitioned $0$-form will also be called \textbf{$C^k$-differential partitioned-function}.
Moreover, whenever we want to specify the partition $\cP=\{X_i\mid i\in I\}$ of $M=\sqcup_{i\in I} X_i$, we will also say \textbf{$\cP$-partitioned form}.

\medskip

Let now $\cP$ be a finite partition $M=\bigsqcup_{i\in I} X_i$.
Consider also $(\mu_i)_{i\in I}$ a $C^k$-differential $\cP$-partitioned-form such that, for all $ i \in I$, $\mu_i$ is everywhere non-zero on $X_i$.
Let moreover $(\zeta_{m,i} \mid i\in I)_{m\in \bN}$ be a sequence of $C^k$-differential $\cP$-partitioned-forms, such that $\deg(\zeta_{m,i})=\deg(\mu_i)$ for all $m$ and all $i$.
We then say that the sequence $(\zeta_ {m,i} \mid i\in I)_m$ 
    \textbf{conformally $C^k$-converges} to $(\mu_i \mid i \in I)$, and denote as
    \[
    [(\zeta_{m,i})]\xrightarrow{C^k}[(\mu_i)] \; ,
    \]
    if there are open neighborhoods $O_i$ of $X_i$ for all $i\in I$ and a sequence of smooth partitioned-functions $(f_{m,i}\colon O_i\to \bR_{>0} \mid i \in I)_m$, such that
    \[
    f_{m,i} \,\widetilde{\zeta}_{m,i}  \xrightarrow{C^k} \widetilde{\mu}_i \quad \text{on } O_i \, ,
    \quad \text{as } m\to \infty \, ,
    \]
    where $\widetilde{\zeta}_{m,i}$ and $\widetilde{\mu_i}$ are $C^k$-extensions over $O_i$ of respectively $\zeta_{m,i}$ and $\mu_i$ (as in the definition of partitioned forms).

\medskip

Note that, if a partition $\cP'$ of $M$ is a \textbf{sub-partition} of another partition $\cP$, i.e.\ every subset in $\cP$ is union of subsets in $\cP'$, any $\cP$-partitioned-form can naturally be seen as a $\cP'$-partitioned-form as well.
In this case, we will hence allow ourselves a little abuse of notation, by talking about sequences of $\cP'$-partitioned-forms converging to a $\cP$-partitioned-form.


\subsubsection{Contact strict approximations}
\label{sec:contact_strict_approximations_subsection}

We now describe the notion of contact approximation which will allow us to prove stability of weak fillability, as described in the Introduction.
In order to do so, we make use of the rank partition $\cP=\{C_i \mid i=0,\ldots n\}$ of $M$ as recalled in \Cref{sec:high_dim_confoliations}. 

\begin{defn}
    \label{def:contact_strict_approx_and_deform}
    Let $k\in\{1,\ldots,\infty\}$.
    Given a c-symplectic confoliation $(\eta,\CS_{\eta,\mu})$, we call \textbf{contact strict $C^k$-approximation}, or simply \textbf{contact strict approximation} if $k=\infty$, of the latter any sequence of contact structures $\xi_m=\ker(\alpha_m)$, indexed by $m\in \bN$, such that the following properties are satisfied.
    \begin{enumerate}
        \item \label{item:contact_approx_and_deform__hyperplanes_convergence} As hyperplane fields, $\xi_m$ $C^k$-converges to $\eta$. More explicitly, this means that the sequence $(\alpha_m)_m$, intended as partitioned $1$-forms for the trivial partition, conformally converges to $\beta$.

        \item \label{item:contact_approx_and_deform__other_mixed_terms} 
        For all $h=0,\ldots, n$, there are 
        \begin{itemize}
        \item a sub-partition $\cP_h= \{C_{i,j}^h\mid 0\leq i \leq n \, , \; j\in J_i^h\}$ of the rank partition $\cP=\{C_i \mid 0\leq i \leq n\}$, where $J_i^h=\{*\}$ if $i\leq n-h-1$,
        
        \item a $C^k$-differential $\cP_h$-partitioned form $(\mu_{h,i,j} \mid 0\leq i \leq n, j\in J_i^h)$, where $\deg\mu_{h,i,j}=2(h-n+i)$ if $n-h\leq i\leq n$ and $\mu_{h,i,j}$ is the constant function $1$ if $0\leq i\leq n-h-1$,
        \end{itemize} 
        satisfying the following properties: 
        \begin{enumerate}
            \item \label{item:contact_approx_and_deform__conformal_convergence_other_mixed_terms}         the $C^k$-differential $\cP_h$-partitioned form 
                \[
                    (\alpha_m\wedge \d\alpha_m^h \text{ on $C_{i,j}^h$ , for } 1\leq i \leq n, j\in J_i^h)
                \]
                conformally $C^k$-converges to the $\cP_h$-partitioned form
                \[
                (\beta\wedge \d\beta^h  \text{ for } 0\leq i\leq n-h-1, j\in J_i^h \; ; \;\;
                \beta\wedge \d\beta^{n-i}\wedge \mu_{h,i,j}\text{ for } n-h \leq i \leq n \, , j\in J_i^h) \, ,
                \]
            \item \label{item:contact_approx_and_deform__positivity_limit_volume_forms}
            for a (hence every) extension $\widetilde\omega\in\Omega^2(M)$ of (any) $\omega\in \CS_{\eta,\mu}$, there is $\delta>0$ and a volume form $\Vol$ on $M$ such that the maximum-degree $C^k$-differential $\cP_h$-partitioned form 
            \[
             (\Vol \text{ for } 0\leq i\leq n-h-1, j\in J_i^h \; ; \;\;
              \beta\wedge \d\beta^{n-i}\wedge \mu_{h,i,j} \wedge \widetilde\omega^{n-h}  \text{, for }n-h\leq i \leq n, j\in J_i^h) \, ,
            \]
                is bounded below by $\delta \Vol$.
        \end{enumerate}
    \end{enumerate}

    \noindent
    A \textbf{contact strict $C^k$-deformation} is defined analogously to a contact strict $C^k$-approximation, but with a parameter  $s\in[0,1]$ instead of $m$, with the family being $C^k$ in the parameter $s$ too, and $s\to 0$ instead of $m\to \infty$.
\end{defn}

Several remarks about the above definition are in order.

\begin{rmk}
\label{rmk:contact_approx_and_deform_only_depends_on_omega_on_Keta}
\Cref{item:contact_approx_and_deform__positivity_limit_volume_forms} above only depends on $\widetilde\omega\vert_{\cK_\eta}=\mu\vert_{\cK_\eta}$.
In particular, it is indeed independent on the choice of $\omega\in \CS_{\eta,\mu}$ and of the extension $\widetilde\omega$.
(Note that this is fundamental for the statement of \Cref{thm:weak_fillability_stable_under_strict_contact_approx} to even make sense, due to the notion of weak filling for c-symplectic confoliations recalled in \Cref{sec:high_dim_confoliations}.)
\end{rmk}

\begin{rmk}
\label{rmk:3D_contact_approx}
Contact approximations and deformations of confoliations in ambient dimension $3$, as defined by Eliashberg--Thurston \cite{EliashbergThurston_Confoliations}, naturally satisfy \Cref{def:contact_strict_approx_and_deform} above.
This is simply due to the fact that $\alpha_m\wedge \d\alpha_m = f_m \Vol$, with $f_m\to 0$, by the contact condition.
\end{rmk}

\begin{rmk}
    \label{rmk:Keta_rank2_case_contact_approx}
    In the case where $\cK_\eta$ has rank at most $2$ at each point of $M$, \Cref{def:contact_strict_approx_and_deform} simplifies considerably and becomes equivalent to \cite[Definition 7.5]{CardonaGironella_ConfoliationsHighDim}.
    In the general case, the additional information guaranteed by \Cref{def:contact_strict_approx_and_deform} with respect to the notion in \cite{CardonaGironella_ConfoliationsHighDim} is the fact that some control is asked to the wedge products $\alpha_m\wedge \d\alpha_m^h$ with $h$ between $n-\frac 12\rk\cK_\eta+1$ and $n$.
    This is key for \Cref{thm:weak_fillability_stable_under_strict_contact_approx}, i.e.\ for the stability of weak fillability under (strict) contact approximations; c.f.\ \Cref{exa:old_notion_approx_not_enough_for_weak_fill}. 
\end{rmk}

\begin{rmk}
\label{rmk:some_mu_hij_are_determined_by_beta}
    Thanks to the requirement in \Cref{item:contact_approx_and_deform__hyperplanes_convergence}, $\alpha_m\wedge \d\alpha_m^h \to \beta\wedge \d\beta^h$ on $C_{i}$ with $n-h\leq i \leq n$.
    Thus the real requirement of \Cref{item:contact_approx_and_deform__conformal_convergence_other_mixed_terms,item:contact_approx_and_deform__positivity_limit_volume_forms} is the one for $n-h \leq i \leq n$.
\end{rmk}

\begin{rmk}
    \label{rmk:subpartition_necessary}
    Using partitioned differential forms is necessary for two reasons. 
    First, the maximal non-zero contribution of $(\eta,\CpS_{\eta})$ to the above wedge products is $\beta\wedge\d\beta^{n-i}$, which depends on $i$ in a ``discontinuous way''; as a result, the conformal factors needed to isolate the first converging error term in $\alpha_m\wedge \d\alpha_m^h$ will in general depend discontinuously on the point of $M$, with discontinuities at the interfaces between the different $C_i$'s.
    Second, even inside a single $C_i$, the dominating (as $m\to \infty$) factor of $\alpha_m\wedge \d\alpha_m^h$ might change in a discontinuous way depending on the point of $C_i\subset M$, as the following example shows.
\end{rmk}

\begin{rmk}
    \label{rmk:strict_contact_approximation}
    If $(\eta,\CS_{\eta,\mu})$ is a \emph{strict} c-symplectic confoliation, according to \Cref{lem:strict_c-symplectic_confoliation_positive_coefficients_polynomial}, up to using $\omega\in \CS_{\eta,\mu}$ of the form $\omega = \mu + T \d\beta$ for a very large $T>0$, we are allowed to replace  \Cref{item:contact_approx_and_deform__positivity_limit_volume_forms} with the following:
    \begin{itemize}
        \item[2.(b')] \label{item:contact_approx_and_deform__positivity_limit_volume_forms_bis} for a (hence every) extension $\widetilde\omega\in\Omega^2(M)$ of $\omega\in \CS_{\eta,\mu}$, there is $\delta>0$ and a volume form $\Vol$ on $M$ such that the maximum-degree $C^k$-differential $\cP_h$-partitioned form 
            \begin{multline*}
             \left( \, \beta\wedge\d\beta^h\wedge\widetilde\omega^{n-h} \text{ for } 0\leq i\leq n-h-1, j\in J_i^h \; ; \right. \\ \;\;
              \left. \beta\wedge \d\beta^{n-i}\wedge \mu_{h,i,j} \wedge \widetilde\omega^{n-h}  \text{, for }n-h\leq i \leq n, j\in J_i^h \, \right)
            \end{multline*}
                is bounded below by $\delta \Vol$.
    \end{itemize}
    In fact, as the statement of \Cref{thm:weak_fillability_stable_under_strict_contact_approx} clearly indicates, \Cref{def:contact_strict_approx_and_deform} is useful especially in the case of strict c-symplectic confoliations. 
    This is exactly what motivates the nomenclature of ``contact strict approximation''.
\end{rmk}

\medskip

We now describe some examples which show that some other, equally plausible, notions of contact approximations present some undesired properties, most notably the fact that they depend on the choice of extension $\widetilde \omega$ of $\omega\in \CS_{\eta,\mu}$.

\begin{exa}
Asking a lower bound by a strictly positive volume form, independent of $m$, in \Cref{item:contact_approx_and_deform__positivity_limit_volume_forms} will be important for the proof of \Cref{thm:weak_fillability_stable_under_strict_contact_approx} in \Cref{sec:proof_stability_weak_fill_under_contact_strict_approx}.
This is required as it is possible, for a family of polynomials depending on integers $m>0$, all of them positive for all $t\geq 0$, to have minima that converge to $0$ as $m\to \infty$.
(For instance, $P_m(t)=(t-m)^2+1/m$.)
\end{exa}

\begin{exa}
\label{exa:old_notion_approx_not_enough_for_weak_fill}
The notion of contact approximation from \cite[Definition 7.5]{CardonaGironella_ConfoliationsHighDim} is unfortunately not enough to guarantee stability of weak fillability analogously to \Cref{thm:weak_fillability_stable_under_strict_contact_approx,thm:weak_fillability_stable_under_tame_contact_approx}.
The issue is that \cite[Definition 7.5]{CardonaGironella_ConfoliationsHighDim} only requires the good compatibility conditions wit $\CS_{\eta,\mu}$ of \emph{the first order} ``correction'' given by the forms of the type $\alpha_m \wedge \d\alpha_m^{n-\frac 12\rk \cK_\eta+1}$, and does not guarantee anything about the higher order products $\alpha_m \wedge \d\alpha_m^{h}$ for $n-\frac 12\rk \cK_\eta+1<h\leq n$. 
(Note that the products $\alpha_m\wedge \d\alpha_m^h$ for $h\leq n-\frac 12\rk\cK_\eta$ are guaranteed to converge to $\beta\wedge\d\beta^{h}$ as we only look here at $C^k$-approximations for $k\geq 1$.)
An explicit (local) counterexample showing that \cite[Definition 7.5]{CardonaGironella_ConfoliationsHighDim} is not enough for stability of weak fillability is the following.

On $\bR^7$ with coordinates $(z,x_i,y_i)$, consider $\beta= \d z $, $\mu = \sum_{i=1}^3 \d x_i \wedge \d y_i $ and $\alpha_m = \d z + \frac{1}{m}x_1 \d y_1 - \frac{1}{m^2}x_2\d y_2 - \frac{2}{m^2}x_3\d y_3$.
Then an explicit computation gives
\[
    \beta\wedge (\mu + t \d\beta)^3=\beta \wedge \mu ^3 = 6\d z \wedge \d x_1 \wedge \d y_1 \wedge\d x_2 \wedge \d y_2 \wedge \d x_3 \wedge \d y_3 >0
\]
while 
\[
    \alpha_m \wedge (\mu + t\d\alpha_m)^3 = 6\left(1+\frac{t}{m}\right)\left(1-\frac{t}{m^2}\right)\left(1-\frac{2t}{m^2}\right) \d z \wedge \d x_1 \wedge \d y_1 \wedge\d x_2 \wedge \d y_2 \wedge \d x_3 \wedge \d y_3 \, ,
\]
which, for all $m\geq 1$, takes both positive and negative values for $t\in[0,\infty)$.
\end{exa}

\begin{exa}
    \label{exa:Mori_definition}
    In the spirit of the definition of $\epsilon\tau$-confoliation by Mori \cite{Mori_NoteMitsumatsuConstruction}, one could simply have asked that there is $m_0>0$ such that, for all $m\geq m_0$,
    \[
        \alpha_m \wedge (\widetilde \omega + t \d\alpha_m)^n >0 \text{ for all }t\geq 0 \, .
    \]
    However, this condition turns out to depend on the specific choice of extension $\widetilde \omega$ of $\omega\in \CS_{\eta,\mu}$. 
    (This is an undesirable dependence if one wants a statement such as \Cref{thm:weak_fillability_stable_under_tame_contact_approx,thm:weak_fillability_stable_under_strict_contact_approx}.) 
    Here is an explicit example.

    On $\bR^5$, consider $\beta=\d z$, $\widetilde\omega = \d x_1 \wedge \d y_1 + \d x_2 \wedge \d y_2$ and 
    \[
    \alpha_m = dz + \frac{x_1}{m}\d x_2 \, .
    \]
    Then, one has
    \[
        \alpha_m\wedge (\widetilde\omega + t\d\alpha_m)^2 = 2 \d z \wedge \d x_1 \wedge \d y_1 \wedge \d x_2 \wedge \d y_2 \, ,
    \]
    as the $\d x_1\wedge \d x_2$ term from $\d\alpha_m$ does not contribute any non-zero term to such maximum degree differential form.
    However, if we use instead $\widetilde \omega' = \widetilde \omega + \d z \wedge \d y_2$, which has same restriction as $\widetilde \omega$ to $\ker\beta=\ker\d z$, the fact that the $\d z$ contribution can now come also from $\widetilde\omega'$ instead of just from $\alpha_m$ leads to
    \[
        \alpha_m\wedge (\widetilde\omega' + t\d\alpha_m)^2 = 2\left(1-\frac{x_1}{m}\right) \d z \wedge \d x_1 \wedge \d y_1 \wedge \d x_2 \wedge \d y_2  \, ,
    \]
    which is instead negative at points with $x_1>m$.
\end{exa}


\subsection{Examples of contact, strict and tame, deformations}
\label{sec:examples_contact_strict_and_tame_approx_and_deform}

In this section we prove \Cref{prop:examples_contact_approx_and_deform} from the Introduction.
For clarity, we will describe the examples of contact strict and/or tame $C^\infty$-deformations separately.


\subsubsection{Branched covers and fiber sums}
\label{sec:branched_covers_fiber_sums}

We discuss here how contact branched covers and fiber sums as defined by Geiges in \cite{Geiges_BranchedCoversFiberSums} (c.f.\ also \cite{Gonzalo_ContactBranchedCovers} for the case of $3$-dimensional contact branched covers) can naturally be seen as contact strict and tame deformations of certain strict and tame c-symplectic confoliations, that have been introduced in \cite[Section 2]{Gironella_ConstructionsContactManifolds} (i.e.\ \cite[Section 5.2]{Gironella_PhDThesis}) and \cite[Section 5.3]{Gironella_PhDThesis}.
As in \cite{CardonaGironella_ConfoliationsHighDim}, we treat in detail the case of branched covers;
the case of fiber sums is completely analogous, hence we omit it.

Let $(M,\xi=\ker\alpha)$ be a contact $(2n+1)$-manifold, and consider $\pi\colon \widehat M \to M$ a smooth branched cover over a ``downstairs branching locus'' given by a codimension $2$ submanifold $B\subset M$.
Denote also $\widehat B = \pi^{-1}(B)$ the ``upstairs branching locus''.
Choose now any connection form $\gamma$ on the sphere normal bundle of $\widehat B$ in $\widehat M$.
Let also $\rho\colon \widehat M \to \bR_{>0}$ be a distance function from $\widehat B$, w.r.t.\ an auxiliary Riemannian metric. Up to rescaling the latter, we can arrange that all values in $(0,1]$ are regular for $\rho$, and their preimages are hence smooth copies of the sphere normal bundle of $\widehat B$.

Note that $\widehat\xi=\pi^*\xi$ is indeed a strict c-symplectic confoliation.
Indeed, the conformal partial-symplectic structure $\CpS_{\widehat \xi}=\pi^*(\CS_\xi)$ and the two-form $\rho\, \d \rho  \wedge \gamma$ are symplectically compatible $2$-forms on $\widehat\xi$.

For $\widehat\alpha:=\pi^*\alpha$, we then consider the family of hyperplane fields $\xi_s$ given by the kernel of the $1$-forms 
\[
    \alpha_s = \widehat\alpha + s \epsilon \chi(\rho) \rho^2 \gamma \, ,
\]
where $\chi\colon \bR \to [0,1]$ is a cut-off function that is $1$ near $0$ and $0$ near $1$.
According to the computations in \cite[Section 2.1]{Gironella_ConstructionsContactManifolds} (that are completely analogous to those in \cite{Geiges_BranchedCoversFiberSums}), if $\epsilon>0$ is small enough then $\xi_s$ is a contact structure for every $s\in(0,1]$.

As $\widehat\xi$ itself is contact on the complement of $\widehat B =\{\rho=0\}$, along which $\cK_{\widehat\xi}$ is of rank exactly $2$, 
the only thing to prove in order to check that \Cref{def:contact_strict_approx_and_deform} holds is that there is an extension $\Gamma_s$ of $(\alpha_s\wedge\d\alpha_s^{n})\vert_{\rho=0}$ on a neighborhood $U_\epsilon=\{\rho<\epsilon\}$ of $\{\rho=0\}$ such that, up to conformal rescaling, it converges to a differential form of the type $\widehat\alpha\wedge\d\widehat\alpha^{n-1}\wedge \mu$ that is positive at points of $\{\rho=0\}$.  

We then restrict to the open neighborhood $U$ of $\widehat B$ in $\widehat M$, and, up to shrinking $\epsilon$, we can assume that $\chi(\rho)=1$ over all of $U_\epsilon$.

As computed in \cite[Section 7.2]{CardonaGironella_ConfoliationsHighDim}, at each point of $U$, we then have 
\[
    \d\alpha_s = \d\widehat\alpha + s \epsilon( 2 \rho \, \d \rho \wedge \gamma + \rho^2 \d\gamma) \, ,
\]
and so
\begin{multline*}
\alpha_s\wedge(\d\alpha_s)^n
=
\widehat\alpha\wedge(\d\widehat\alpha)^n\\
+s\epsilon\Bigl[
2n\,\widehat\alpha\wedge(\d\widehat\alpha)^{n-1}
\wedge\rho\,\d\rho\wedge\gamma
+n\rho^2\,\widehat\alpha\wedge(\d\widehat\alpha)^{n-1}
\wedge\d\gamma
+\rho^2\gamma\wedge(\d\widehat\alpha)^n
\Bigr]
+O(s^2).
\end{multline*}

Set
\(\Gamma_s:=\alpha_s\wedge(\d\alpha_s)^n
- \widehat\alpha\wedge(\d\widehat\alpha)^n.\)
Since
\(\widehat\alpha\wedge(\d\widehat\alpha)^n\)
vanishes along \(\widehat B\), the form \(\Gamma_s\) is an extension of
\(
\left.
\alpha_s\wedge(\d\alpha_s)^n
\right|_{\widehat B}
\)
to \(U_\epsilon\).
By the above computation, we have
\[
\frac{1}{s}\Gamma_s
\xrightarrow{s\to0}
\epsilon\Bigl[
2n\,\widehat\alpha\wedge(\d\widehat\alpha)^{n-1}
\wedge\rho\,\d\rho\wedge\gamma
+n\rho^2\,\widehat\alpha\wedge(\d\widehat\alpha)^{n-1}
\wedge\d\gamma
+\rho^2\gamma\wedge(\d\widehat\alpha)^n
\Bigr].
\]

The first term on the right-hand side restricts to a positive volume
form along \(\widehat B\). Moreover, the remaining two terms are
multiplied by \(\rho^2\). Hence, after shrinking \(U_\epsilon\) if
necessary, the first term dominates the latter terms, and the above
limit is positive on \(U_\epsilon\). This proves the required
conformal convergence and concludes the proof.
\hfill\qedsymbol


\subsubsection{Bourgeois structures}
\label{sec:bourgeois_structures}

Bourgeois constructed in \cite{Bourgeois_ToriAreContact} an explicit contact structure $\xi$ on $M\times \bT^2$ starting from a contact structure $\eta$ on $M^{2n+1}$ equipped with a supporting open book decomposition.
More precisely, let the open book be presented as a pair $(B,\theta)$ of a codimension $2$ contact submanifold with a trivial normal bundle $B\times D^2$ and a fibration $\theta\colon M\setminus B \to \bS^1$ that restricts to the projection onto the angular coordinate of $D^2$ on $B\times (D^2\setminus \{0\})$. 
Let also $\beta$ a defining $1$-form for $\eta$ that is adapted to such open book, in the sense of Giroux \cite{Giroux_OpenBooks}.
Then, for a function $\rho$ that equals to the radial coordinate of $D^2$ on $B\times D^2$ near $B$, is constant away from $B\times \{0\}$ in $B\times D^2$, and satisfies $\rho'(r)\geq 0$ everywhere, the $1$-form 
\[
    \alpha_s = \beta + s \phi_1 \d x_1 - s \phi_2 \d x_2 
\]
is contact on $M\times \bT^2$ for all $s>0$, where $(x_1,x_2)$ are coordinates in $\bT^2$ and $(\phi_1,\phi_2)=\rho(r)\cdot (\cos\theta,\sin\theta)$.

\paragraph{Contact strict deformation of the split confoliation.} 
We start by proving that $\xi_s:=\ker\alpha_s$ is a contact strict $C^\infty$-deformation of the strict c-symplectic confoliation $(\xi_0:=\eta \oplus T\bT^2, \CS_{\xi_0,\omega_T})$, where $\omega_T$ is any area form on $\bT^2$.
We assume for simplicity that $\omega_T = \d x_1 \wedge \d x_2$.

First, said c-symplectic confoliation is indeed strict, because \eqref{eqn:polynomial_form} reduces here to simply 
\[
    \beta\wedge(\omega_T + t\d\beta)^{n+1} = (n+1)t^{n}\beta\wedge\d\beta^{n}\wedge \omega_T \, .
\]
Similarly, the contact $C^\infty$-deformation is indeed strict as we simply have
\begin{align*}
    \alpha_s\wedge(\omega_T + t\d\alpha_s)^{n+1} & \, = (n+1)t^{n}\alpha_s\wedge\d\alpha_s^{n}\wedge \omega_T + t^{n+1} \alpha_s \wedge \d\alpha_s^{n+1} \\
    & \, = (n+1)t^{n}\beta\wedge\d\beta^{n}\wedge \omega_T + t^{n+1} \alpha_s \wedge \d\alpha_s^{n+1} \, ,
\end{align*}
with the last term being equal, according to \cite{Bourgeois_ToriAreContact} (or an explicit computation), to
Similarly, we have
\begin{align*}
\alpha_s\wedge
(\omega_T+t\,d\alpha_s)^{n+1}
={}&
(n+1)t^n
\alpha_s\wedge(d\alpha_s)^n\wedge\omega_T
+t^{n+1}\alpha_s\wedge(d\alpha_s)^{n+1}\\
={}&
(n+1)t^n
\beta\wedge(d\beta)^n\wedge\omega_T
+t^{n+1}\alpha_s\wedge(d\alpha_s)^{n+1}.
\end{align*}
Moreover, a direct computation gives
\[
\begin{aligned}
\alpha_s\wedge(d\alpha_s)^{n+1}
={}&
s^2(n+1)
\Big[
(d\beta)^n\wedge
(\phi_1d\phi_2-\phi_2d\phi_1)\\
&\qquad\qquad
+n\,\beta\wedge(d\beta)^{n-1}
\wedge d\phi_1\wedge d\phi_2
\Big]
\wedge dx_1\wedge dx_2.
\end{aligned}
\]
The form in brackets is a positive volume form on \(M\), since the
open book supports the contact structure \(\eta\). Hence, after rescaling
by \(s^{-2}\), the top-degree term converges to a positive volume form
on \(M\times T^2\).
Rescaled by $1/s^2$, this is strictly positive by the fact that the open book on $M$ supports $\eta$, and hence \Cref{def:contact_strict_approx_and_deform} is satisfied, as desired.
\hfill\qedsymbol

\paragraph{Contact strict deformation of the spinal open book confoliation.} 
We now pass to showing that Bourgeois structures can also be seen as a contact $C^\infty$-deformations, first of strict type (and later also of tame type), of another special (strict) c-symplectic confoliation on $M\times \bT^2$.
Let us first start by recalling the geometric decomposition behind this second deformation.

As reported e.g.\ in \cite[Page 13 of arXiv:v1]{BowdenGironellaMoreno_TightFillBourgeois}, it was first observed by Lisi that the Bourgeois contact structures on $V=M\times \bT^2$ are supported by a ``spinal open book decomposition''.
Without too much abstraction (as the case at hand is very special and concrete), we can use as ``definition'' of \textbf{spinal open book decomposition} that of a geometric decomposition of a contact manifold $(V,\xi)$ that generalizes the notion of open book decomposition in such a way that the mapping torus is replaced by a fibration $V_P$, called \textbf{paper}, over a strict contact manifold $(Y,\alpha_Y)$ with Liouville fibers $(\Sigma,\lambda_\Sigma)$, and the neighborhood of the binding is replaced by a trivial fibration $V_S$, the \textbf{spine}, over a Liouville filling $(W,\lambda_W)$ of $(Y,\alpha_Y)$ and with fibers the contact boundary $(B,\alpha_B)$ of $(\Sigma,\lambda_\Sigma)$.

In the concrete case of the Bourgeois, the spine is simply $V_S=B\times DT^*_{1-\epsilon}\bT^2$, where $DT^*_{1-\epsilon}\bT^2:=\{\Vert p\Vert\leq 1-\epsilon\}\subset DT^*\bT^2$, equipped with the contact form $\alpha_B + \lambda_T$. 
The paper $V_P$ is a fibration over $(ST^*\bT^2=\bT^3,\alpha_{T}=\lambda_T\vert_{ST^*\bT^2})$ with fibers $(\Sigma^\epsilon, \lambda_\Sigma)$, given by the complement of an $\epsilon$-neighborhood of the boundary (in the standard normal neighborhood  given by the Liouville flow, of the form $((1-\delta,1]\times B , \d\lambda_\Sigma = \d(r\alpha_B))$ with $\alpha_B:=\lambda_\Sigma\vert_B$) of the pages of the open book decomposition on $M$.
Here, $V_P$ is considered equipped with a contact form $\alpha_P$ of the form $C\alpha_T + \lambda_{fib}$, for some big $C>0$, where the first $\alpha_T$ is intended as the pullback of the one on $ST^*\bT^2 $ under the projection $V_P \to ST^*\bT^2$, and $\lambda_{fib}$ is a fiberwise Liouville form restricting to $\lambda_{T}$ on each fiber, given by the starting contact open book.
Lastly, there is an interface region $V_I:=  (1-2\epsilon,1+2\epsilon)_r\times ST^*\bT^2\times B$ thanks to which the two  contact forms on $V_S$ and $V_P$ above are interpolated, with a contact form $\alpha_I$ of the type $f(r)\alpha_B + g(r) \alpha_T$, with $f$ and $g$ positive and $f'g-g'f<0$ for all $r\in (1-2\epsilon,1+2\epsilon)$ by the (positive) contact condition (here, that $V_I=(1-2\epsilon,1+2\epsilon)\times ST^*\bT^2\times B$ is considered with the product orientation induced by $\partial_r$, $\alpha_T$ and $\alpha_B$). 
Such $f$ and $g$ additionally need to satisfy that 
\[f(r)=1 \, , \; g(r) = e^{r-2+\epsilon} \text{ for } r< 1-\epsilon \, ,
\text{ and } 
f(r) = e^{-r+2-\epsilon} \, , \;  g(r)=C \text{ for } r> 1+\epsilon \, ,\]
in order to be able to glue the interface region to $V_S$ and $V_P$.
We also ask that $f'<0$ and $g'>0$ wherever they are not already required to be constant.
(Note that this is possible if $C>0$ is big enough, which we assume to be the case.)

Using this geometric decomposition and explicit description of the Bourgeois contact form (up to isotopy of the contact structure), the latter can naturally be seen as being the time $s=1$ of the following family of $1$-forms:
\[
\alpha_s' = \begin{cases}
    \alpha_B + s \lambda_T & \text{on }V_S=B \times DT^*_{1-\epsilon}\bT^2 \, , \\
    f_s \alpha_B + g_s \alpha_T & \text{on }V_I=(1-2\epsilon,1+2\epsilon)\times ST^*\bT^2 \times B  \, , \\
    s \lambda_{fib} + C\alpha_T & \text{on }V_P \, , \\
\end{cases}
\]
where $f_s$ and $g_s$ are $s$-parametric functions of $r\in (1-2\epsilon,1+2\epsilon)$ satisfying the following properties:
\begin{itemize}
    \item $f_1=f$, $g_1=g$;
    \item $f_s'< 0$ and $g_s'> 0$ for $s>0$ wherever they are not required to be constant;
    \item $f_0'<0$ and $g_0'>0$ for $1-\epsilon <r<1+\epsilon$;
    \item $f'_sg_s-g'_sf_s<0$ for all $r\in (1-2\epsilon,1+2\epsilon)$ and $s>0$, and
    \item $f_s(r)=1$ and $g_s(r) = se^{r-2+\epsilon}$  for $r< 1-\epsilon$;
    \item $f_s(r) = se^{-r+2-\epsilon}$, $g_s(r)=C$  for $r> 1+\epsilon$;
    \item for later purposes, we can also arrange that $f_s$ is of the form $sf_1 +(1-s)f_0$, and $g_s$ of the form $sg_1+(1-s)g_0$.
\end{itemize}

One can explicitly check that $\alpha_s'$ defines a contact structure $\xi_s'$ for all $s>0$, and that $\alpha_0'$ defines a smooth confoliation $\eta'$.
Note that $\cK_{\eta'}$ is of rank $4$ on $V_S$ and on the region $\{r\leq 1-\epsilon\}$ of $V_I$, of rank $2n-2$ on $V_P$ and on the region $\{r\geq 1+\epsilon\}$ of $V_I$, and of rank $0$ (i.e.\ $\eta'$ is contact) on $\{1-\epsilon < r < 1+ \epsilon\}$.
Moreover, $\eta'$ can be promoted to a c-symplectic confoliation via $\CS_{\eta',\mu'}$, where $\mu' $ is the following $2$-form\footnote{Strictly speaking, this $\mu'$ is degenerate on $\eta'$ as it has non-trivial kernel; however, up to adding a small multiple of $\d\alpha_0'$, it does become non-degenerate, and hence it defines indeed a well defined symplectic cone together with $\CpS_{\eta'}$, as required.\label{ftnt:muprime_degenerate}}:
\[
\mu' = \begin{cases}
    \d\lambda_T & \text{on }V_S=B \times DT^*\bT^2 \, , \\
    \rho(r)\,\d\!\left(e^{r-2+\epsilon}\alpha_T\right)
+
\bar\rho(r)\,\d\!\left(e^{-r+2-\epsilon}\alpha_B\right) & \text{on }V_I=(1-2\epsilon,1+2\epsilon)\times ST^*\bT^2 \times B \, , \\
    \d_{fib} \lambda_{fib} & \text{on }V_P \, , \\
\end{cases}
\]
The region \(V_P\) appears twice here, and the second occurrence should presumably be \(V_I\). 
Moreover, it seems that the interface term should be
\[
\rho(r)\,\d\!\left(e^{r-2+\epsilon}\alpha_T\right)
+
\bar\rho(r)\,\d\!\left(e^{-r+2-\epsilon}\alpha_B\right),
\]
so that it agrees with \(\d\lambda_T\) on the spine side and with
\(\d_{fib}\lambda_{fib}\) on the paper side. If so, the subsequent computation of
\(\alpha_s'\wedge(\mu'+t\,\d\alpha_s')^{n+1}\)
would also need to be modified accordingly. Nevertheless, it seems that the desired positivity and strict-deformation conclusion should still follow after these corrections.
where $\d_{fib}$ denotes the exterior derivative in the fibers direction, and $\rho,\bar\rho\colon (1-2\epsilon, 1+2\epsilon)\to \bR_{\ge0}$ with $\bar\rho(r)=\rho(2-r)$ 
and $\rho$ equals $1$ for $r\in (1-2\epsilon , 1-3\epsilon/2)$ and $0$ for $r\in[1-\epsilon,1+2\epsilon)$, and $\rho'<0$ on $(1-3\epsilon/2,1-\epsilon)$.
Note that $\rho'< 0$ where $f_0'=0$ and $\bar\rho'> 0$ where $g_0'=0$.

One can explicitly check that $\alpha_s'\wedge (\mu' + t\d\alpha_s')^{n+1}$ is a volume form on $V_S$ and $V_P$.
On the interface region \(V_I\), we have
\begin{equation}
\label{eqn:polynomial_bourgeois_spinal_open_book}
\begin{aligned}
&\alpha_s'\wedge
(\mu'+t\,\d\alpha_s')^{n+1}\\
={}&
n(n+1)
\Bigl[
g_s\left(tf_s'-e^{-r+2-\epsilon}\bar\rho\right)
-
f_s\left(tg_s'+e^{r-2+\epsilon}\rho\right)
\Bigr]\\
&\cdot
\left(tf_s+e^{-r+2-\epsilon}\bar\rho\right)^{n-1}
\left(tg_s+e^{r-2+\epsilon}\rho\right)
\d r\wedge\alpha_B\wedge(\d\alpha_B)^{n-1}
\wedge\alpha_T\wedge\d\alpha_T
>0.
\end{aligned}
\end{equation}
for every \(s\geq0\) and \(t\geq0\).

Now, thanks to the above choices, analyzing the sign of each of the involved functions one can see that each $t$-coefficient is strictly positive for $s>0$.

Moreover, for $s=0$, the terms in the above \eqref{eqn:polynomial_bourgeois_spinal_open_book} are all non-negative, and the ones coming from products of the form $\alpha_0'\wedge (\d\alpha_0')^h$ for $h$ at most the corank of $\cK_{\eta'}$ (that changes depending on the region of the manifold, as described above) are in fact also strictly positive.
In particular, this means that the above c-symplectic confoliation $(\eta',\CS_{\eta',\mu'})$ is strict.

Lastly, by what already said above, in order to check \Cref{def:contact_strict_approx_and_deform} is satisfied for $\xi_s'$ and $(\eta',\CS_{\eta',\mu'})$, it remains to check that the wedge products of the form $\alpha_s'\wedge (\d\alpha_s')^h$, for $h\geq n-\frac 12\rk\cK_{\eta'}+1$, converge, for $s\to 0$ and up to conformal rescaling, to differential forms of the type $\alpha_0'\wedge (\d\alpha_0')^{n-\frac 12\rk\cK_{\eta'}}\wedge \mu_h$, which after wedging with $(\mu')^{n-h}$ become positive volume forms.

In order to prove so, first notice that $\alpha_s\wedge \d\alpha_s^h$ are all polynomial in $s$, hence they do converge to some non-trivial differential form for $s\to 0$, up to rescaling by some power of $s$. 
More precisely, one can explicitly compute that, for the relevant cases of $h$ dictated by the ranks of $\cK_{\eta'}$ previously described, we have that
\[
    \alpha_s'\wedge (\d\alpha_s')^h =
    \begin{cases}
        K s^{h-n+1} \alpha_B\wedge \d\alpha_B^{n-1}\wedge \d\lambda_T^{h-n+1} + O(s^{h-n+2}) & \text{ on $V_S$ and for $n-1<h\leq n$}\\
        K's^{h-1} C^2 \alpha_T \wedge \d\alpha_T \wedge \d_{fib}\lambda_{fib}^{h-1} + O(s^h) & \text{on $V_P$ and for $1<h\leq n$,}
    \end{cases}
\]
where $K$ and $ K'$ are two positive constants (coming from binomial coefficients).
Note that no computation is needed on the region $V_I$, as on $V_I \cap \{1-\epsilon < r <1+\epsilon \}$ the $1$-form $\alpha_0'$ is itself contact, and $V_I \cap (\{r<1-\epsilon \} \cup \{ r >1+\epsilon \})$ is in fact the gluing region, and hence already accounted for in $V_S\cup V_P$.
From these formulas it is then straightforward to see that the last conditions required to verify \Cref{def:contact_strict_approx_and_deform} for $\alpha_s'$ are indeed verified, thus concluding the proof that $\xi_s'$ is a strict contact deformation of $(\eta',\CS_{\eta',\mu'})$.

\paragraph{Contact tame deformation of the spinal open book confoliation.} 
We use here the computations of the previous paragraph, and we now prove that $\xi_s'$ is also a contact \emph{tame} deformation of $(\eta',\CS_{\eta',\mu'})$.
We work on each piece of the manifold $V_S$, $V_P$ and $V_I$ separately; the candidate complex structures we give on each piece will then glue well to a global complex structure.

Let us start from the spine.
We can split 
\[
TV_S
=
\langle R_B\rangle\oplus\xi_B
\oplus
\langle\partial_{p_1},
\partial_{q_1}-sp_1R_B\rangle
\oplus
\langle\partial_{p_2},
\partial_{q_2}-sp_2R_B\rangle.
\]
where $R_B$ is the Reeb vector field of $\alpha_B$ on the $B$ factor, $\xi_B$ is the kernel of $\alpha_B$ interpreted again on the $B$ factor, and where we use coordinates $(p_i,q_i)$ on the factor $DT^*\bT^2$. Here, we use as complex structure $J^S_s$ on $\xi_s'$ defined as follows: it preserves $\xi_B$ and restricts to it as an $s$-independent complex structure $J^B$ tamed by $\d\alpha_B$, and it sends $\partial_{p_i}$ to $\partial_{q_i}-sp_iR_B$.
One can then explicitly check that $J_s^S$ is tamed by $\d\alpha_s'\vert_{V_S}$ and that it converges to some $J^S$ on $\eta'\vert_{V_S}$ that is tamed by $\CpS_{\eta'}$ and $\mu'$.

Let's now pass to the paper.
Denoting by $\VertBdle\subset TV_P$ the vertical bundle of the fibration structure of $V_P$ over $ST^*\bT^2$, we can consider its $\d\lambda_{fib}$-orthogonal $\HorBdle:=\VertBdle^{\perp_{\d\lambda_{fib}}}$ (which is independent of $s$).
We can then further decompose $\HorBdle$ via horizontal lifts $R_T^{\HorBdle}$ and $\xi_T^{\HorBdle}$ of, respectively, the Reeb vector field $R_T$ of $\alpha_T$ and the contact structure $\xi_T=\ker\alpha_T$ on $ST^*\bT^2$.
Let's also fix an auxiliary fiberwise $J_{fib}$ on $\VertBdle$ that is $\d_{fib}\lambda_{fib}$-tamed, and another auxiliary complex structure $J_T$ on $\xi_T$ tamed by $\d\alpha_T$.
We can then construct a complex structure $J_s^P$ on 
\[
\xi_s'\vert_{V_P}=\xi_T^{\HorBdle}\oplus \widetilde\VertBdle_s  \, ,
\text{ where } \widetilde\VertBdle_s :=  \langle CX - s\lambda_{fib}(X) R_T^{\HorBdle} \mid  X \in \VertBdle\rangle
\]
by declaring: $J_s^P \vert_{\xi_T^\HorBdle}$ is the pullback $J_T^{\HorBdle}$ under the projection map $\HorBdle\to \xi_T$ of $J_T$; on $\widetilde\VertBdle_s$ we define $J_s^P(\widetilde X)=\widetilde Y$ if $\widetilde X = CX - s\lambda_{fib}(X) R_T^{\HorBdle}$ and $\widetilde Y= CJ_{fib}X - s\lambda_{fib}(J_{fib}X) R_T^{\HorBdle}$.
Again, a direct computation shows that this is tamed by $\d\alpha_s'$, and that it converges to $J^P=J_{fib}\oplus J_T^{\HorBdle}$ on $\eta'\vert_{V_P}= \VertBdle\oplus \xi_T^{\HorBdle}$, that is tamed by $\CpS_{\eta'}$ and $\mu'$.

Lastly, it is only left to deal with the interface region $V_I$.
Here, we can split
\[
    \xi_s'\vert_{V_I} = \langle \partial_r , f_s R_T - g_s R_B \rangle \oplus  \xi_B \oplus \xi_T \, .
\]
Hence, for $J_B$ and $J_T$ as previously considered, we can define $J_s^I$ on $\xi_s'$ at all points of $V_I$ as follows:
$J_s^I(\partial_r) = f_s R_T - g_s R_B$, $J_s^I\vert_{\xi_B}=J_B$ and $J_s^I\vert_{\xi_T}=J_T$.
Note that, in order for this to make sense, we need that $f_s^2+g_s^2\neq 0$ for all $s\geq 0$; this is indeed the case by our choices above.
An explicit computation shows that this is indeed tamed by $\d\alpha_s'$ on $\xi_s'$ (if $C>0$ is big enough, which we assume).
Moreover, such $J_s^I$ converges to a certain $J^I$ on $\eta'\vert_{V_I}$, which is clearly tamed by $\CpS_{\eta'}$ and also by $\mu'+\delta \d\alpha_0$ (c.f.\ the footnote\footref{ftnt:muprime_degenerate}) by the choice of the cut-off functions $\rho$ and $\bar\rho$.
This concludes the proof.


\subsubsection{Mori's deformation}
\label{sec:mori_deformation}

All the needed computations are done in \cite[Section 7.2]{CardonaGironella_ConfoliationsHighDim}, following \cite[Section 3]{Mori_NoteMitsumatsuConstruction}. 
We report them here for completeness, using the notations in those two references, without explicitly reintroducing everything and just recalling the necessary formulas.

Recall that the main aim of \cite{Mori_NoteMitsumatsuConstruction} was the description of a certain type of contact deformation (called $\epsilon\tau$-deformation) of the codimension-$1$ symplectic foliation $(\cF,\tau_\cF)$ on $\bS^5$ found by Mitsumatsu \cite{Mitsumatsu_SymplecticLawsonFoliation}.

Firstly, note that, by \Cref{rmk:3Dconf_and_fol_and_weakly-filled-contact_are_strict}, $(\cF,\tau_\cF)$ is in particular a strict c-symplectic confoliation.

Then, recall that in \cite{Mori_NoteMitsumatsuConstruction}  Mori uses an open book decomposition of $\bS^5$ in order to describe his $\epsilon\tau$-deformation.
The used open book is a contact one, but it has the additional nice property that the binding $N^3$ also admits a ``cosymplectic structure'', i.e.\ a closed non-vanishing $1$-form $\nu$ and a closed $2$-form $\tau_N$ that is non-degenerate on $\ker\nu$; in this special case, we also know that $\tau_N$ is of the form $\eta\wedge \gamma$.
Moreover, as detailed e.g.\ in \cite[Section 3.1]{GironellaToussaint_ConformalSymplecticFoliations}, we also know that $\d\eta = \gamma \wedge \nu$, and hence $\eta$ is a contact form on $N$, inducing the contact structure at the binding of the contact open book.

Now, checking that our definition of contact strict $C^\infty$-deformation is satisfied over the mapping torus part of the open book is straightforward, thanks to the explicit forms of the family of contact forms $\alpha_s$ and of the ambient extension $\tau$ of $\tau_\cF$ (the leafwise symplectic structure on $\cF$).
Hence, we just deal more explicitly with the remaining part of $\bS^5$, i.e.\ with the neighborhood $N\times D^2$ of the binding.

Here, the family of $1$-forms giving the candidate strict contact deformation is given by
\[
    \alpha_s = (1-s)f_0\nu + s f_s \eta + g_s \d\theta + (1-s)h\d\rho \, .
\]
Then, most of the necessary computations have been done explicitly in \cite[Section 7.2]{CardonaGironella_ConfoliationsHighDim}.
Namely, we have that, where $f_0\neq 0$, i.e.\ for $\rho<1/2$, we have
\[
\alpha_s\wedge \d\alpha_s=s\alpha_0\wedge \zeta + O(s^2)\, ,
\]
where
\[
    \zeta = g_0 \d\theta \wedge \gamma + h \d\rho \wedge \gamma + \left( g_1' - \frac{f_0'}{f_0}g_1 \right) \d\rho \wedge \d\theta \, .
\]
Moreover, it is also computed in \cite{CardonaGironella_ConfoliationsHighDim} that, always for $\rho<1/2$,
\[
    \alpha_0\wedge \zeta \wedge \tau  = (f_0 g_1' - f_0' g_1) \delta \nu \wedge \omega \wedge \d\rho \wedge \d\theta \, .
\]
Using the signs listed in \cite[Page 9]{Mori_NoteMitsumatsuConstruction}, one can see that each of this is indeed strictly positive on $\{\rho<1/2\}$.

In order to finish checking \Cref{def:contact_strict_approx_and_deform}, all is left to do is hence to understand what is the dominating term of $\alpha_s\wedge \d\alpha_s$ on $\{\rho\geq 1/2\}$, and prove that it can also be written as $\alpha_0\wedge \mu'$ for some $\mu'$ satisfying that $\alpha_0\wedge \mu'\wedge \tau>0$ on $\{\rho\geq 1/2\}$.

According again to the computations in \cite[Section 7.2]{CardonaGironella_ConfoliationsHighDim}, in this region $\{\rho\geq 1/2\}$, where in particular $f_0=0$, we have 
\[
    \alpha_s\wedge \d\alpha_s=s^2\alpha_0\wedge \zeta' + O(s^3) \, ,
\]
with 
\[
\zeta' = f_1 \gamma \wedge \nu + \frac{f_1 g_0' - f_1' g_0}{g_0^2 + h^2} \Big( h \d\theta \wedge \eta - g_0 \d\rho \wedge \eta \Big) \, .
\]
Lastly, again according to \cite{CardonaGironella_ConfoliationsHighDim}, we have
\[
    \alpha_0\wedge\zeta' \wedge \tau = (f^2_1 g_0' - 2f_1 f'_1 g_0) \d\rho \wedge \d\theta \wedge \eta \wedge \gamma \wedge \nu + \delta (f_1 g_0' - f'_1 g_0) \d\rho \wedge \d\theta \wedge \eta \wedge \omega \, ,
\]
that is non-negative according to the signs of the involved functions as listed in \cite[Page 9]{Mori_NoteMitsumatsuConstruction}.
This concludes.
\hfill\qedsymbol


\subsubsection{Conformal symplectic foliations}
\label{sec:conformal_symplectic_foliations}

The (naive) linear contact deformations given in \cite{BertelsonMeigniez_ConformalSymplecticFoliations} are defined by the $s$-parametric family of $1$-forms $\alpha_s:= \beta + s\lambda$, where $\cF=\ker\beta$ is a smooth codimension $1$ foliation and $\lambda$ is a $1$-form on $M$ restricting to a leafwise (locally) conformal Liouville structure on $\cF$, with respect to a leafwise Lee form given by the restriction to $\cF$ of a certain $1$-form $\eta$ on $M$ which also coincides with the holonomy form of $\beta$, i.e.\ satisfies $\d\beta = \eta \wedge \beta$.
We will consider $\cF$ equipped with the leafwise conformal symplectic structure $\d_\eta\lambda$ as our limit c-symplectic confoliation.

\bigskip

Let us start by checking that this linear deformation can be realized as a contact tame $C^\infty$-deformation. For this, we need to find a family $J_s$ of complex structures on $\xi_{s}:=\ker\alpha_s$ which are tamed by $\CS_{\xi_s}$ and converge, as $s\to 0$, to some complex structure $J$ on $\cF$ that is tamed by $\d_\eta\lambda$.

For this, notice that 
\begin{equation}
\label{eqn:dalphas_berteleson-meigniez_example}
    \d\alpha_s = \d\beta + s\d\lambda 
    = \eta\wedge \beta + s\d\lambda
    =\eta\wedge \alpha_s + s\d_\eta\lambda \, ,
\end{equation}
hence the restriction $\d\alpha_s\vert_{\xi_s}$, that defines $\CS_{\xi_s}$, coincides just with the conformal class of $\d_\eta\lambda\vert_{\xi_s}$.
In particular, one can choose any $\d_\eta\lambda-$tamed complex structure $J$ on $\cF$, and then, by the openness of the taming condition and the fact that $\xi_s\xrightarrow{s\to 0}\cF$, any $C^\infty$-family $(J_s)_{s\in [0,\epsilon)}$ of complex structures on $\xi_s$ extending $J_0:=J$ on $\cF$ will be tamed by $\d_\eta\lambda\vert_{\xi_s}$ for $s$ small enough, as desired.

\bigskip

Let us now pass to proving that the linear deformation in \cite{BertelsonMeigniez_ConformalSymplecticFoliations} can also be seen as a contact strict deformation of $(\cF,\d_\eta\lambda)$.
First note that, according to \Cref{rmk:3Dconf_and_fol_and_weakly-filled-contact_are_strict}, $(\cF,\d_\eta\lambda)$ is indeed a strict c-symplectic confoliation.

We next show that the same family is a contact strict deformation of
\((\cF,\d_\eta\lambda)\). For every \(h=1,\ldots,n\), one has
\[
\alpha_s\wedge(\d\alpha_s)^h
=
s^h\alpha_s\wedge(\d_\eta\lambda)^h,
\]
because every term containing
\(\alpha_s\wedge\eta\wedge\alpha_s\) vanishes. Consequently,
\[
s^{-h}\alpha_s\wedge(\d\alpha_s)^h
\longrightarrow
\beta\wedge(\d_\eta\lambda)^h
\qquad\text{as }s\to0.
\]
After wedging with \((\d_\eta\lambda)^{n-h}\), we obtain
\[
s^{-h}\alpha_s\wedge(\d\alpha_s)^h
\wedge(\d_\eta\lambda)^{n-h}
\longrightarrow
\beta\wedge(\d_\eta\lambda)^n,
\]
which are strictly positive volume forms.
In other words, $\xi_s$ and $(\cF,\d_\eta\lambda)$ satisfy \Cref{def:contact_strict_approx_and_deform}, as desired.
\hfill\qedsymbol


\subsubsection{Giroux torsion domains}
\label{sec:giroux_torsion_domains}

We are now going to describe how the contact structures $\xi_k$ from \cite[Theorem A]{MassotNiederkrugerWendl_WeakFillings} on manifolds of the form $M\times \bT^2$, for some special $M$, can be seen as a contact strict $C^\infty$ deformation of a c-symplectic foliation on $M\times \bT^2$. 
We follow here the computations from \cite[Proposition 1.8, Item 5]{CardonaGironella_ConfoliationsHighDim} (adapting slightly the notation to better fit our conventions).

\medskip

Recall that $\xi_k$ on $M\times \bT^2$ from \cite{MassotNiederkrugerWendl_WeakFillings} is given by the kernel of
    \[
    \alpha_k = \frac{1+\cos (kx)}{2} \alpha_+ + \frac{1-\cos (kx)}{2}\alpha_- + \sin(kx) \d y \, ,
    \]
where $(x,y)$ are coordinates on $\bT^2$ and $(\alpha_+,\alpha_-)$ is a Liouville pair on $M$.
Moreover, $M$ is the quotient of $\bR^n\times \bR^{n+1}$ under a certain co-compact lattice for the action
    \begin{equation*}
    (\tau_1,\ldots,\tau_n)\cdot(t_1,\ldots,t_n,\theta_0,\ldots,\theta_n) 
    := (t_1+\tau_1 ,\ldots, t_n + \tau_n , 
    e^{-(\tau_1+\ldots + \tau_n)}\theta_0, e^{\tau_1}\theta_1,\ldots, e^{\tau_n}\theta_n)\, ,
    \end{equation*}
and the covering space $\bR^n\times \bR^{n+1}$, we have
    \[
    \alpha_\pm = \pm e^{t_1+\ldots + t_n}\d\theta_0 +{e^{-t_1}\d\theta_1 + \ldots + e^{-t_n} \d\theta_n} \, .
    \]

Following \cite{CardonaGironella_ConfoliationsHighDim}, one can first change the contact structures $\xi_k$ by a contact isotopy, given by Gray's stability, to contact structures $\eta_k$ defined by 
\[
    \beta_k = \d x + \alpha_k = \d x +  \frac{1+\cos (kx)}{2} \alpha_+ + \frac{1-\cos (kx)}{2}\alpha_- + \sin(kx) \d y \, .
\]
After this preliminary change, for $s\in[0,1]$, consider the family 
    \[
    \beta_{k,s} = \d x + s\alpha_k = \d x +  s\left[\frac{1+\cos (kx)}{2} \alpha_+ + \frac{1-\cos (kx)}{2}\alpha_- + \sin(kx) \d y\right] \, .
    \]
For $s=0$ we have $\beta_{k,0} = \d x$ defining the foliation $\eta_0=\ker(\d x)$.
This is considered as a c-symplectic confoliation, together with $\CS_{\eta_0,\omega}$ where $\omega$ is defined by (the quotient of) the following $2$-form on $\bR^n\times \bR^{n+1}$:
    \[
    \omega = e^{t_1+\ldots + t_n}\d\theta_0 \wedge \d y - \sum_{i=1}^n e^{-t_i} \d t_i \wedge \d\theta_i \, .
    \]
We then claim that $\xi_{k,s}=\ker(\beta_{k,s})$ gives, for all $k\geq 1$ and as $s\to 0$, a contact strict $C^\infty$-deformation of the strict c-symplectic confoliation $(\eta_0,\CS_{\eta_0,\omega})$.

First, notice that $(\eta_0,\CS_{\eta_0,\omega})$, as a c-symplectic confoliation, is indeed strict according to \Cref{rmk:3Dconf_and_fol_and_weakly-filled-contact_are_strict}.
Then, we need to check \Cref{def:contact_strict_approx_and_deform}.

For this, we start by computing
    \begin{align*}
    \d\alpha_k 
    = & \,  \frac{k}{2}\sin(kx) \d x \wedge (-\alpha_++\alpha_-) + \frac{1+\cos (kx)}{2} \d\alpha_+ \\
    & + \frac{1-\cos (kx)}{2}\d\alpha_- {+ k\cos(kx)\d x \wedge \d y} \\ 
    = & \,  -  k\sin(kx)e^{t_1+\ldots + t_n }\d x \wedge\d\theta_0 -\sum_{i=1}^n e^{-t_i}\d t_i \wedge\d\theta_i  \\
    & + \cos(kx) e^{\sum_i t_i} \sum_{i=1}^n \d t_i \wedge \d\theta_0 {+  k\cos(kx)\d x \wedge \d y} \, .
    \end{align*}

Hence, for all $h=1,\ldots, n+1$, we get
    \begin{align*}
        \beta_{k,s}\wedge \d\beta_{k,s}^h = \; & 
        s^h \d x \wedge (\d \alpha_k)^h + O(s^{h+1}) \\
        = \; & s^h \d x \wedge \left[ 
             -  k\sin(kx)e^{t_1+\ldots + t_n }\d x \wedge\d\theta_0 -\sum_{i=1}^n e^{-t_i}\d t_i \wedge\d\theta_i \right. \\
    & \left. + \cos(kx) e^{\sum_i t_i} \sum_{i=1}^n \d t_i \wedge \d\theta_0  +  k\cos(kx)\d x \wedge \d y \, 
        \right]^h + O(s^{h+1}) \\
        = \; & s^h\d x \wedge \left[ 
             -\sum_{i=1}^n e^{-t_i}\d t_i \wedge\d\theta_i 
    + \cos(kx) e^{\sum_i t_i} \sum_{i=1}^n \d t_i \wedge \d\theta_0 \, 
        \right]^h + O(s^{h+1}) \, .
    \end{align*}
This shows that one has $\frac{1}{s^h}\beta_{k,s}\wedge \d\beta_{k,s}^h \to \d x \wedge  \mu_h$ with 
\[
\mu_h = \left[ 
             -\sum_{i=1}^n e^{-t_i}\d t_i \wedge\d\theta_i 
    + \cos(kx) e^{\sum_i t_i} \sum_{i=1}^n \d t_i \wedge \d\theta_0 \, 
        \right]^h \, .
\]
The only thing left to prove is hence that $\d x \wedge \mu_h\wedge \omega^{n+1-h}>0$ everywhere.
For this, we compute
\begin{align*}
    \d x \wedge \mu_h\wedge \omega^{n+1-h} =  & \, 
    \d x \wedge \left[ 
             -\sum_{i=1}^n e^{-t_i}\d t_i \wedge\d\theta_i 
    + \cos(x) e^{\sum_i t_i} \sum_{i=1}^n \d t_i \wedge \d\theta_0 \, 
        \right]^h  \\
        & \qquad\qquad  \wedge \left[e^{t_1+\ldots + t_n}\d\theta_0 \wedge \d y - \sum_{i=1}^n e^{-t_i} \d t_i \wedge \d\theta_i \right]^{n+1-h} \\
        \overset{(*)}{=} & \, (n+1-h) e^{t_1+\ldots + t_n}\d x \wedge \d\theta_0 \wedge \d y \left[- \sum_{i=1}^n e^{-t_i} \d t_i \wedge \d\theta_i\right]^n  \\
        = & \, (-1)^{n+1} (n+1-h) n! \d x \wedge \d y \wedge \d\theta_0 \wedge \d t_1 \wedge \d \theta_1 \wedge \ldots\wedge \d t_n \wedge \d\theta_n \\
        = & \, (-1)^{n} (n+1-h) n! \d\theta_0 \wedge \d t_1 \wedge \d \theta_1 \wedge \ldots\wedge \d t_n \wedge \d\theta_n \wedge  \d x \wedge \d y 
\end{align*}
where $(*)$ comes from the following facts: first, at least one term $\d y$, necessarily coming from $\omega$, should be present in any non-zero contribution; second, the term containing $\d\theta_0 $ in $\mu_h$ will contribute just trivial terms; and third, regrouping the remaining terms from $\mu^h$ and $\omega^{n-h}$ that are $h$-th and $(n-h)$-th powers of the same term $\sum_{i=1}^n e^{-t_i}\d t_i \wedge \d\theta_i$.
Now, the above term is indeed positive, as $M$ is oriented in such a way that $\alpha_+$ is a positive contact form, and we can compute 
\[
\alpha_+\wedge \d\alpha_+^n = (-1)^{n} (n+1)! \d\theta_0 \wedge \d t_1 \wedge \d \theta_1 \wedge \ldots\wedge \d t_n \wedge \d\theta_n \, .
\]

It remains to consider the top-degree case \(h=n+1\). We have
\begin{align*}
\beta_{k,s}\wedge(\d\beta_{k,s})^{n+1}
={}&
s^{n+1}\d x\wedge(\d\alpha_k)^{n+1}
+
s^{n+2}\alpha_k\wedge(\d\alpha_k)^{n+1}.
\end{align*}
The first term vanishes: after wedging with \(\d x\), the remaining
part of \(\d\alpha_k\) has no \(\d y\)-component, and hence its
\((n+1)\)-st power is zero. Therefore,
\[
\beta_{k,s}\wedge(\d\beta_{k,s})^{n+1}
=
s^{n+2}\alpha_k\wedge(\d\alpha_k)^{n+1}.
\]
Since \(\alpha_k\) is a contact form, the right-hand side is positive
for \(s>0\). Thus, after rescaling by \(s^{-(n+2)}\), it converges to
the required positive top-degree form.

This concludes the proof that $\beta_{k,s}$ defines a contact strict deformation, as desired. 
\hfill \qedsymbol


\subsection{Stability of weak fillability}
\label{sec:stability_weak_fillability}

In \Cref{sec:proof_stability_weak_fill_under_contact_tame_approx}
we prove  \Cref{thm:weak_fillability_stable_under_tame_contact_approx} stating the stability of weak fillability under contact tame approximations, as defined in \Cref{def:contact_tame_approx_and_deform}.
Afterwards, in \Cref{sec:proof_stability_weak_fill_under_contact_strict_approx} we prove \Cref{thm:weak_fillability_stable_under_strict_contact_approx}, dealing with the analogous stability for the contact approximations of strict type, as defined in \Cref{def:contact_strict_approx_and_deform}.

\subsubsection{The tame case}
\label{sec:proof_stability_weak_fill_under_contact_tame_approx}

We use here the notations in \Cref{thm:weak_fillability_stable_under_tame_contact_approx} and \Cref{def:contact_tame_approx_and_deform}.

According to \cite[Definition 4 and Theorem 2.4]{MassotNiederkrugerWendl_WeakFillings}, in order to prove that $\xi_m$ is weakly filled by $(W,\Omega)$ it is equivalent to prove that there is a complex structure $J_m$ on $\xi_m$ that is tamed at the same time by $\CS_{\xi_m}$ and by $\Omega\vert_{\xi_m}$.
We are hence going to prove that this is true, for $m$ large enough, for the sequence $J_m$ in \Cref{def:contact_tame_approx_and_deform}.

The fact that $J_m$ is tamed by $\CS_{\xi_m}$ is already in the \Cref{def:contact_tame_approx_and_deform}. What needs to be proven is hence that $J_m$ is tamed also by $\Omega\vert_{\xi_m}$.

For this, consider $v_m \in \xi_m$ a vector of unit norm, w.r.t.\ an auxiliary Riemannian metric $g$ which will also be used to define the extension $\widetilde J_m$.
Then, we can decompose $v_m = u_m + e_m \in \eta\oplus \eta^\perp$.
Up to passing to a subsequence, we can also assume that  $e_m\to 0$ and $u_m\to u_\infty\in \eta$, as the unit sphere bundle of $\xi_m$ converges to that of $\eta$ because of the assumption that $\xi_m\xrightarrow{C^k}\eta$.

As $\widetilde J_m \xrightarrow{C^k} \widetilde J$ by assumption of contact tame $C^k$-approximation, we then have
\begin{equation*}
    \Omega\vert_{\xi_m}(v_m,J_m v_m) = \Omega(u_m + e_m , \widetilde J_m(u_m) + \widetilde J_m (e_m)) \xrightarrow{m\to \infty} \Omega(u_\infty,\widetilde J u_\infty)=\Omega\vert_\eta(u_\infty, J u_\infty) \, .
\end{equation*}

Now, by definition of weak filling $(W,\Omega)$ of $(\eta,\CS_{\eta,\mu})$ we know that $\Omega\vert_{\cK_\eta}= f\mu\vert_{\cK_\eta}$ for some function $f>0$.
According to \cite[Lemma 2.46]{CardonaGironella_ConfoliationsHighDim}, we moreover know that, up to replacing $\Omega\vert_M$ by  $\Omega\vert_M + T \d\beta$ for some large $T>0$ (which can be realized by attaching a long cylindrical end on top of $(W,\Omega)$ according to the normal form \cite[Lemma 2.45]{CardonaGironella_ConfoliationsHighDim} near the boundary), we can moreover arrange that $\omega:=\Omega\vert_M + T \d\beta$ on $M$ realizes that the symmetric bilinear form $(\omega\vert_\eta)(\cdot , J \cdot)$ appearing in the right hand side of the above equation (or rather its shifted version) is strictly positive.

Once this is achieved, by compactness of the sphere bundle of $\eta$, there is $m_0>0$ so that, for each $m>m_0$, 
\[
    \omega\vert_{\xi_m}(\cdot,J_m \cdot)\colon S\xi_m\to \bR
\]
on the sphere bundle $S\xi_m$ is lower bounded by a positive real constant, as desired.
\hfill \qedsymbol

\subsubsection{The strict case}
\label{sec:proof_stability_weak_fill_under_contact_strict_approx}

We use here the notations in \Cref{thm:weak_fillability_stable_under_strict_contact_approx} and \Cref{def:contact_strict_approx_and_deform}.
Up to rescaling $\alpha_m$, we can also arrange that $\alpha_m\overset{C^1}{\to}\beta$ uniformly in $m$.

Recall also that, by definition of $(W,\Omega)$ being a weak filling of $(M,\eta,\CS_{\eta,\mu})$, we have that $\Omega_M\vert_{\cK_{\eta}}= f \mu\vert_{\cK_\eta}$ for some positive function $f\colon M \to \bR_{>0}$, where $\Omega_M := \Omega\vert_M$. \Cref{rmk:contact_approx_and_deform_only_depends_on_omega_on_Keta} then guarantees that \Cref{item:contact_approx_and_deform__conformal_convergence_other_mixed_terms} holds for $\Omega_M$ as choice of extension of $\mu\vert_{\cK_\eta}$ to $M$.

Let us also point out that, more formally, the conclusion of \Cref{thm:weak_fillability_stable_under_strict_contact_approx} can be formulated as follows:
\begin{equation}
\label{eqn:stability_weak_fillability_statement}
\tag{$\dagger$}
\exists\, m_0>0 \text{ such that }
\forall\, m\ge m_0,\quad
\alpha_m\wedge(\Omega_M+t\,\d\alpha_m)^n>0
\quad \forall\, t\ge0.
\end{equation}

\noindent
We are hence going to prove the following property, by reverse induction on $i$, from $i=n$ to $i=0$, where $\Vol$ is a fixed auxiliary volume form on $M$:
\begin{equation}
\label{eqn:stability_weak_fillability_statement_i}
\tag{$\dagger_i$}
\begin{aligned}
&\exists\, m_0^i>0,\;  \exists \,\delta_i>0 , \;
\exists \text{ neighborhood } V_i \text{ of } \cup_{j=i}^n C_j
\text{ such that} \\
&\qquad
\forall\, m\ge m_0^i,\;
\forall\, t\ge0,\;
\alpha_m\wedge(\Omega_M+t\,\d\alpha_m)^n>\delta_i \Vol
\text{ on } V_i \, .
\end{aligned}
\end{equation}
As \Cref{thm:weak_fillability_stable_under_strict_contact_approx} is simply a rephrasing of $(\dagger_0)$, proving
\eqref{eqn:stability_weak_fillability_statement_i} for all $i$ is enough to conclude.

\bigskip

Let's start by proving \eqref{eqn:stability_weak_fillability_statement_i} with $i=n$.
For this, we expand 
\begin{equation}    \label{eqn:proof_stability_weak_fillability_under_approx__base_step}
    \alpha_m\wedge(\Omega_M+t\,\d\alpha_m)^n = \sum_{h=0}^n \binom{n}{h} t^h \alpha_m\wedge\d\alpha_m^h\wedge \Omega_M^{n-h} \, .  
\end{equation}
At all points of $C_n$, where $\rk\cK_\eta=2n$ is maximal, according to \Cref{def:contact_strict_approx_and_deform} (and to the definition of conformal convergence of partitioned forms), we have a finite family (independent of $m$!) of open sets $O_{j,n}\subset M$ for $j\in J_n$ such that $O_n:=\cup_{j\in J_n} O_{j,n}$ covers $C_n$ and such that, for each $j\in J_n$ and $h=1,\ldots , n$, on $O_{j,n}$ there is a sequence (indexed by $m$) of positive functions $f_{m,h,n,j}$ and differential forms $\mu_{h,n,j}$ of degree $2h$ (recall that $\mu_{h,n,j}=1$ if its degree is zero, by convention), such that 
\[  f_{m,h,n,j} \alpha_m \wedge \d \alpha_m^h \xrightarrow{C^0} \rho_{h,n,j}  \]
where $\rho_{h,n,j}$ is an extension over $O_{j,n}$ of $\beta \wedge \mu_{h,n,j}$ on $O_{j,n}\cap C_n$, and so that
\[ \beta\wedge\mu_{h,n,j}\wedge \Omega_M^{n-h} >0 \text{ at all points of }C_n \, . \]
As $C_n$ is closed in $M$ that is compact, it is itself compact, and hence we know that there is $\delta_{n,h}>0$ such that
\[ \beta\wedge\mu_{h,n,j}\wedge \Omega_M^{n-h} > 2\delta_{n,h} \Vol \text{ at all points of }C_n \, , \]
and up to taking $\delta_n:=\min_{0\leq h \leq n} \delta_{n,h}$ and to shrinking $O_n$ to a smaller open neighborhood $V_n$ of $C_n$, we have
\[
    \rho_{h,n,j}\wedge \Omega_M^{n-h}>\frac{3}{2}\delta_n \Vol \text{ at all points of }V_n \, .
\]
Going back to \eqref{eqn:proof_stability_weak_fillability_under_approx__base_step}, we can then rewrite 
\begin{equation}    \label{eqn:proof_stability_weak_fillability_under_approx__base_step_bis}
    \alpha_m\wedge(\Omega_M+t\,\d\alpha_m)^n = \alpha_m\wedge \Omega_M^n +  \sum_{h=1}^n \binom{n}{h} \frac{t^h}{f_{m,h,n,j}} f_{m,h,n,j} \alpha_m\wedge\d\alpha_m^h\wedge \Omega_M^{n-h} 
\end{equation}
and, for all $h=1,\ldots, n$, we have $f_{m,h,n,j}>0$ and 
\[
f_{m,h,n,j} \alpha_m\wedge\d\alpha_m^h\wedge \Omega_M^{n-h} \xrightarrow{m\to \infty} \rho_{h,n,j}\wedge \Omega_M^{n-h} \text{ over }V_n \, .
\]
As the right hand side is bounded below by $\frac{3}{2}\delta_n \Vol$ on $V_n$, we have that, for each $m$ big enough and up to shrinking slightly $V_n$ (but still denoting it the same), each coefficient of the polynomial in $t$ from \eqref{eqn:proof_stability_weak_fillability_under_approx__base_step_bis} is bounded below by $\delta_n \Vol$.
In particular, there is $m_0>0$ such that for all $m>m_0$, $\alpha_m\wedge (\Omega_M+t\d\alpha_m)^n>\delta_n \Vol$ on all of $V_n$, as desired in $(\dagger_n)$.
(Note that this relies \emph{in a fundamental way} on the fact that the top degree coefficient of the polynomial in \eqref{eqn:proof_stability_weak_fillability_under_approx__base_step_bis} is strictly positive uniformly in $m$, for $m$ big enough; c.f.\ \Cref{exa:Mori_definition}.)

\bigskip

It is now left to prove the inductive step: namely, we assume that $(\dagger_{i+1})$ is true, and prove \eqref{eqn:stability_weak_fillability_statement_i}.
This is very similar to the base step, so it will be treated more succinctly.

As we already know that each coefficient of the polynomial $\alpha_m\wedge (\Omega_M+t\d\alpha_m)^n$ is strictly positive at all points in $V_{i+1}$ by the inductive assumption, it is enough to prove that the $t$-coefficients are also strictly positive on $C_i\setminus V_{i+1}$.

For this, we decompose the polynomial $P(t)$ at points of $C_i\setminus V_{i+1}$ as follows:
\begin{equation}
\label{eqn:proof_stability_weak_fillability_under_approx__inductive_step}
    \alpha_m\wedge(\Omega_M+t\,\d\alpha_m)^n = \sum_{h=0}^{n-i} \binom{n}{h} t^h \alpha_m\wedge\d\alpha_m^h\wedge \Omega_M^{n-h} \;+ \sum_{h=n-i+1}^{n} \binom{n}{h} t^h \alpha_m\wedge\d\alpha_m^h\wedge \Omega_M^{n-h} \, .  
\end{equation}

We now discuss positivity of the first sum in the right hand side above.

Now, because $\alpha_m \xrightarrow{C^1} \beta$ (that we previously arranged up to rescaling), we have that, at all points of $C_i\setminus V_{i+1}$,
\[
    \sum_{h=0}^{n-i} \binom{n}{h} t^h \alpha_m\wedge\d\alpha_m^h\wedge \Omega_M^{n-h} \xrightarrow{m\to \infty} \sum_{h=0}^{n-i} \binom{n}{h} t^h \beta\wedge\d\beta^h\wedge \Omega_M^{n-h} \overset{(*)}{=} \beta\wedge(\Omega_M + t\d\beta)^n
\]
where $(*)$ comes from the fact that $\cK_\eta$ has rank $i$ at points of $C_i\setminus V_{i+1}$.
At this point, either directly in the case of quasi-strong filling or up to attaching a long cylindrical end and using \Cref{lem:strict_weak_filling_c-symplectic_confoliation_positive_coefficients_polynomial} in the case of strict weak filling, 
we have that $\beta\wedge(\Omega_M + t\d\beta)^n>0$ for all $t\geq 0$. 
Moreover, as this is a polynomial of degree exactly $n-i$ at all points of $C_i\setminus V_{i+1}$, just like 
$\sum_{h=0}^{n-i} \binom{n}{h} t^h \alpha_m\wedge\d\alpha_m^h\wedge \Omega_M^{n-h}$ is, together with the fact that the above limit holds, we can deduce that, for all $p\in C_i\setminus V_{i+1}$ there is $m_i^p>0$ 
such that for all $m>m_i^p$, 
$\sum_{h=0}^{n-i} \binom{n}{h} t^h \alpha_m\wedge\d\alpha_m^h\wedge \Omega_M^{n-h}$ is strictly positive at $p$, say bounded below by $\delta_{i,p}'\Vol$. 
Lastly, as $C_i\setminus V_{i+1}$ is a closed subset of $M$ (as $\rk\cK_\eta$ is upper semi-continuous) and hence it is compact, one can find uniform constants $m_i'>0$ and $\delta_i'>0$ guaranteeing the same positivity property for all points $p\in C_i\setminus V_{i+1}$.

As far as the $t$-coefficients of the terms in the second sum in \eqref{eqn:proof_stability_weak_fillability_under_approx__inductive_step} are concerned, 
by the same argument as in the base case, relying on the convergence property guaranteed by \Cref{def:contact_strict_approx_and_deform} and on compactness of $C_i\setminus V_{i+1}$, one can prove that there is $m_i''>0$ so that for all $m>m_i''$ the concerned $t$-coefficients in the second sum of the right hand side of \eqref{eqn:proof_stability_weak_fillability_under_approx__inductive_step} are also strictly positive, say bounded below by $\delta_i''\Vol$.

Now, by compactness of $C_{i}\setminus V_{i+1}$ and by the fact that the $t^n$-coefficient is strictly positive on this subset, the lower bound will for each of the $t$-coefficients in \eqref{eqn:proof_stability_weak_fillability_under_approx__inductive_step} will in fact hold on a neighborhood $V_{i}'$ of $C_i\setminus V_{i+1}$.

Then, choosing $V_i:=V_i' \cup  V_{i+1}$, $m_i:=\max(m_i',m_i'',m_{i+1})$ and $\delta_i = \min (\delta_i',\delta_i'',\delta_{i+1})$ we get the conclusion of \eqref{eqn:stability_weak_fillability_statement_i}, as desired in the inductive step.

\medskip

This concludes the proof of \Cref{thm:weak_fillability_stable_under_strict_contact_approx}.
\hfill\qedsymbol


\subsection{A symplectic foliation without contact approximations}
\label{sec:sympl_fol_without_contact_approx}

In this section we prove \Cref{cor:no_contact_approximations_product_foliation} from the Introduction.
In fact, this follows immediately from the combination of \Cref{lem:strong_sympl_fol_are_quasi-strong_sympl_semi-fillable} and \Cref{thm:rank_char_fol_at_least_two,thm:weak_fillability_stable_under_tame_contact_approx,thm:weak_fillability_stable_under_strict_contact_approx}. 

\smallskip

More precisely, \Cref{lem:strong_sympl_fol_are_quasi-strong_sympl_semi-fillable} guarantees that there is a weak (in fact strong) symplectic filling $W$ of 
\[
(\overline M,\overline \cF,\omega) \sqcup (M,\cF,\omega) \, ,
\]
where $M=\bS^1\times \bS^2\times X$, $\cF=\ker \d\theta$ with $\theta\in \bS^1$, and $\omega= \omega_{\bS^2}+\omega_X$.

Let's assume by contradiction that there is a sequence $(\xi_m)_{m\in \bN}$ of contact structures on $M$ approximating, in the tame or strict sense, $(\cF,\CS_{\cF,\omega})$.
Then, according to \Cref{thm:weak_fillability_stable_under_tame_contact_approx,thm:weak_fillability_stable_under_strict_contact_approx}, for $m$ large enough, one can see $W$ as a weak filling of 
\[
(\overline M,\overline \cF,\CS_{\cF,\omega}) \sqcup (M,\xi_m) \, .
\]

Now, as $\overline\cF$ is a foliation by manifolds $\bS^2\times X$, with $\omega_X$ symplectically aspherical, \Cref{thm:rank_char_fol_at_least_two} applies, and it implies that there is no point on $\partial W$ where the hyperplane distribution is contact.
This clearly contradicts the fact that one boundary component of $W$ is entirely made of contact points (for $\xi_m$).
This concludes the proof of \Cref{cor:no_contact_approximations_product_foliation}.
\hfill \qedsymbol


\bibliographystyle{alpha}
\bibliography{biblio_v1}

\Addresses

\end{document}